%% file: MainPaperFile.tex
\documentclass{interact}
\usepackage{url,hyperref}
\usepackage[english]{babel}
\usepackage{algorithm}
\usepackage{algpseudocodex}
\algrenewcommand\algorithmicrequire{\textbf{Input:}\phantom{\textbf{O}}}
\algrenewcommand\algorithmicensure{\textbf{Output:}}
\usepackage[T1]{fontenc}
\usepackage{amssymb,amsmath,amsfonts,amsthm}
\usepackage[shortlabels]{enumitem}
\usepackage{comment}

\usepackage[numbers,sort&compress]{natbib}% Citation support using natbib.sty
\bibpunct[, ]{[}{]}{,}{n}{,}{,}% Citation support using natbib.sty
\makeatletter% @ becomes a letter
\def\NAT@def@citea{\def\@citea{\NAT@separator}}% Suppress spaces between citations using natbib.sty
\makeatother% @ becomes a symbol again
\usepackage{todonotes}

\theoremstyle{plain}% Theorem-like structures provided by amsthm.sty
\newtheorem{theorem}{Theorem}[section]
\newtheorem{lemma}[theorem]{Lemma}
\newtheorem{corollary}[theorem]{Corollary}
\newtheorem{proposition}[theorem]{Proposition}

\theoremstyle{definition}
\newtheorem{definition}[theorem]{Definition}
\newtheorem{example}[theorem]{Example}

\theoremstyle{remark}
\newtheorem{remark}[theorem]{Remark}

\newcommand{\QQ}{\mathbb{Q}}

\newcommand{\I}{{\mathcal I}}
\newcommand{\K}{{\mathcal K}}

\DeclareMathOperator{\cls}{cls}

\DeclareMathOperator{\lt}{lt}
\DeclareMathOperator{\LC}{{\rm lc}}
\DeclareMathOperator{\LM}{{\rm lm}}
\DeclareMathOperator{\LT}{{\rm lt}}

\def\I{\mathcal{I}}

\begin{document}
\title{Deterministic Determination of Axial Constants, Sectional
  Regularities and Annihilator Numbers}
\author{\name{Amir Hashemi\textsuperscript{a} and Rozhin
  Sadr\textsuperscript{a} and Werner
  M. Seiler\textsuperscript{b}\thanks{Email: Amir.Hashemi@iut.ac.ir,
    Rozhinsadr1385@gmail.com, seiler@mathematik.uni-kassel.de}}
 \affil{\textsuperscript{a}Department of Mathematical Sciences, Isfahan University of
    Technology, Isfahan 84156-83111, Iran\\  \textsuperscript{b}Institut f\"{u}r Mathematik, Universit\"{a}t Kassel,
    34109 Kassel, Germany}}
\maketitle    
  \begin{abstract}
    Axial constants and sectional regularities are recently introduced
    invariants of homogeneous polynomial ideals that, like the already
    earlier introduced annihilator numbers, depend on generic choices of
    coordinates.  We provide effective characterisations of the genericity
    underlying these invariants in terms of $\beta$-maximal position and
    strong stability. In particular, we show that $\beta$-maximal position
    suffices for determining generic annihilator numbers and sectional
    regularities, while its combination with strong stability yields
    corresponding results for axial constants. We also develop
    deterministic linear algebra procedures for detecting the required
    generic positions and for computing these invariants.
\end{abstract}
\begin{keywords}
  Polynomial ideals; Gr\"obner bases; involutive bases; Pommaret bases;
  strongly stable ideals; $\beta$-maximal position; generic initial ideals;
  regularity; invariants
\end{keywords}
\begin{amscode}
  13P10; 13D02; 68W30
\end{amscode}
%%%%%%%%%%%%%%%%%%%%%%%%%%%%%%%%%%
\section{Introduction}

The Castelnuovo-Mumford regularity, introduced by Mumford
\cite{zbMATH02195682}, is one of the most fundamental homological
invariants in commutative algebra and algebraic geometry.  It plays a
central role in bounding the degrees of syzygies, controlling the
complexity of Gr\"obner basis computations, and understanding the geometry
of projective varieties.  Since the pioneering works of Galligo
\cite{zbMATH03842031} and Bayer and Stillman \cite{zbMATH04014856}, a wide
variety of approaches have been developed to study and compute regularity,
ranging from Gr\"obner bases~\cite{bermejo} to involutive
bases~\cite{zbMATH05636280,SP} (see also \cite{S}).  In this paper, we
focus on two related notions recently introduced in the literature, namely
\emph{axial constants}~\cite{CH25a} and \emph{sectional
  regularities}~\cite{axial}.  As shown in the latter article, they are
closely related to the \emph{annihilator numbers} introduced by
Trung~\cite{zbMATH04043980} already much earlier (see also \cite{herzog}).

All these invariants depend on a choice of linear forms or coordinates, but
generic choices yield always the same values which can also be related to
the generic initial ideal $\mathrm{Gin}(\mathcal{I})$ of $\mathcal{I}$ with
respect to the degree reverse lexicographic order (introduced by Galligo
\cite{zbMATH03465689} in characteristic zero and by Bayer and Stillman
\cite{bs:reverse} in arbitrary characteristic).  Let
$\mathcal{R}=\mathcal{K}[x_1,\ldots,x_n]$ be a polynomial ring over an
infinite field $\mathcal{K}$ and $\mathcal{I}\subset\mathcal{R}$ a
homogeneous ideal. The $i$-th generic sectional regularity of $\mathcal{I}$
is
\( \operatorname{reg}\big(\mathrm{Gin}(\mathcal{I})+\langle
x_1,\ldots,x_{n-i}\rangle\big) \).  Similarly, the $i$-th generic axial
constant of $\mathcal{I}$ is given by
\( \min{ \bigl\{ j \in \mathbb{N} \mid x_i^j \in \mathrm{Gin}(\mathcal{I})
  \bigr\}} \). The generic annihilator numbers are obtained from
annihilator modules associated with a generic sequence of linear forms or,
equivalently, from the corresponding annihilator numbers of
$\mathcal R/\operatorname{Gin}(\mathcal I)$ with respect to the coordinate
variables.  Precise definitions of these concepts also for the non-generic
case will be given below.

While working with the generic initial ideal is nice from a theoretical
point of view, as one can then always assume that one is in a generic
position, the situation is quite different when one studies concrete
examples which typically are not in a generic position.  The main purpose
of this work is to provide effective characterisations of the generic
positions underlying these invariants and, consequently, to develop
deterministic methods for their computation using the theory of Pommaret
bases.  Rather than relying on a generic change of coordinates, we identify
explicit algebraic conditions that allow us to effectively verify whether a
given coordinate system is in the desired generic position.

A central role is played by the notion of $\beta$-maximal position,
introduced in \cite{HSS} for computing generic annihilator numbers.  We
show that $\beta$-maximal position also suffices for determining the
generic sectional regularities from a Pommaret basis. When strong stability
is present in addition, these sectional regularities are related directly
to the generic axial constants. We furthermore develop a deterministic
linear algebra procedure for determining coordinate changes that put an
ideal into GIN position, assuming that its Castelnuovo--Mumford regularity
is known.

The structure of the paper is as follows.  Section~\ref{sec:2} mainly
recalls the necessary background on Gr\"obner bases, Pommaret bases, and
quasi-regular sequences, but also provides some new results on annihilator
numbers.  Section~\ref{sec:3} discusses strongly stable ideals and their
regularity properties.  While most results in this section are known and
have been established in \cite{axial}, we provide new proofs via Pommaret
bases.  Section~\ref{sec:5} reviews $\beta$-maximal position and its
relevant properties and develops a deterministic linear algebra procedure
for determining coordinate changes that put an ideal into GIN position.
Finally, Section~\ref{sec:6} studies the sectional regularities in
$\beta$-maximal position and, under additional strong-stability
assumptions, relates them to the axial constants.

\input{Preliminaries}
\input{StronglyStable}
\input{B-Max}
\input{Sreg}

\section{Conclusions}

As in \cite{axial}, we studied three different families of generic
invariants of homogeneous ideals: annihilator numbers, sectional
regularities and axial constants.  The main difference of our work compared
to \cite{axial} is the consequent use of Pommaret bases also as a
theoretical tool for the proofs, as it makes all results immediately
constructive and avoids the use of the generic initial ideal.  Of course,
one can argue that Pommaret bases are also a generic notion, as they only
exist in quasi-stable position which is a generic position.  But the real
problem with the GIN position is that it is very hard to verify
effectively.  To the best of our knowledge, no computer algebra system
gives a guarantee that it computes the correct generic initial ideal, they
all just rely on the fact that a random transformation is \emph{almost}
always the right one.  By contrast, just the existence of a Pommaret basis
is a certificate for quasi-stable position and we have deterministic
algorithms for its construction which in many cases lead to rather sparse
transformations, whereas random transformations are always dense.

As the authors of \cite{axial} already remarked, the sectional regularities
are better behaved invariants than the axial constants, as the latter
become infinite for index values lower than the dimension.  The sectional
regularities become constant only for index values lower than the depth.
Thus for ideals which are not Cohen-Macaulay there is an index range where
the sectional regularities still carry information but not the axial
constants.  But it is also clear that the finest invariants are the
annihilator numbers, as the sectional regularities only describe parts of
the boundary of their diagram.  We have here a similar situation as with
the classical regularity which together with the projective dimension
provides a ``bounding box'' for the Betti diagram but contains of course
less information than the full Betti diagram.  We recalled from \cite{HSS}
that the annihilator numbers can be immediately read off from a Pommaret
basis, as they encode how the generators distribute over degrees and
classes.  Unfortunately, one obtains the generic annihilator numbers only
if the ideal is in $\beta$-maximal position whereas for example the
Castelnuovo-Mumford regularity can be read off from \emph{any} Pommaret
basis.

From a computational point of view, our most important result is probably
Algorithm~\ref{alg:bmax}.  Despite its high complexity, we conjecture that
it is the most efficient algorithm presented so far that \emph{guarantees}
to yield the correct generic initial ideal.  The discussion around it also
shows that while theoretically $\beta$-maximal position is a more relaxed
notion of genericity than GIN position, it is computationally just as
difficult.  In fact, with our algorithm it is easier to go directly for GIN
position than for a different $\beta$-maximal position (provided it
exists).  We remark that it is shown in \cite{HSS} that in GIN position a
reduced Gr\"obner basis is simultaneously the unique Pommaret basis so that
one does not need an implementation of an involutive algorithm; by
contrast, this is not necessarily the case in $\beta$-maximal position, as
the latter one does not imply stable position.  It is well-known that only
for stable ideals, the minimal basis is also the Pommaret basis
\cite{mall:pom}.

\section*{Acknowledgements}
The research of the second author was partially conducted
during an extended stay at Kassel University funded by Erasmus+ (KA 171).

%%%%%%%%%%%%%%%%%%%%%%%%%%%%%%
\bibliography{Ref}
\bibliographystyle{tfnlm}
\end{document}

%% file: Preliminaries.tex
\section{Quasi-Stability and Pommaret Bases}
\label{sec:2}

Let $\mathcal{R} = \K[x_1, \dots, x_n]$ be a polynomial ring in finitely
many variables over an infinite field $\K$ and let
\( \mathcal{I} = \langle f_1, \dots, f_k \rangle \subset \mathcal{R} \) be
the ideal generated by homogeneous polynomials
$f_1, \dots, f_k \in \mathcal{R}$.  Given a finite set
$F\subset\mathcal{R}$ of polynomials, we denote by $\deg{(F)}$ the maximal
degree of a polynomial $f\in F$. A power product
\( x_1^{\alpha_1} \cdots x_n^{\alpha_n} \) is called a \emph{term} and is
denoted by $x^\alpha$ with the exponent vector
$\alpha = (\alpha_1, \dots, \alpha_n)$.  We fix a term ordering $\prec$ on
$\mathcal{R}$. We will usually use the degree reverse lexicographic order,
denoted $\prec_{drl}$, which is defined as follows: for two terms
$x^\alpha$ and $x^\beta$, we write \( x^\alpha \prec_{drl} x^\beta \) if
\(\sum_{i=1}^n \alpha_i < \sum_{i=1}^n \beta_i\) or, if
\(\sum_{i=1}^n \alpha_i = \sum_{i=1}^n \beta_i\), we have
$\beta_i < \alpha_i$ for the smallest index $i$ such that
$\alpha_i \neq \beta_i$.  With this definition, we have
$x_1 \prec_{drl} \cdots \prec_{drl} x_n$. For a fixed term ordering $\prec$
and any non-zero polynomial
\( f = c_\alpha x^\alpha + c_\beta x^\beta + \cdots + c_\gamma x^\gamma \),
with $0 \ne c_\alpha \in \K$ and
\( x^\gamma \prec \cdots \prec x^\beta \prec x^\alpha \), the \emph{leading
  term} of $f$ is defined as $\LT(f) = x^\alpha$. The coefficient
$c_\alpha$ is called the \emph{leading coefficient} of $f$ and is denoted
by $\LC(f)$. The \emph{leading monomial} of $f$ is defined as
\( \LM(f) = \LC(f) \, \LT(f) \). If $F \subset \mathcal{R}$ is a finite set
of non-zero polynomials, we denote by
\( \LT(F) = \{\LT(f) \mid f \in F\} \) the set of leading terms of the
elements of $F$. The \emph{leading ideal} of $\mathcal{I}$ is defined as
\( \LT(\mathcal{I}) = \langle \LT(f) \mid 0 \ne f \in \mathcal{I} \rangle
\). A finite set $\{g_1, \dots, g_t\} \subset \mathcal{I}$ of non-zero
polynomials is called a \emph{Gr\"obner basis} of $\mathcal{I}$ with
respect to $\prec$ if
\( \LT(\mathcal{I}) = \langle \LT(g_1), \dots, \LT(g_t) \rangle \). For
more details, we refer to \citep{CLO}.

A natural refinement of Gr\"obner bases is provided by \emph{involutive
  bases}, which are based on the notion of involutive divisions. In this
framework, each element of a finite set of polynomials is assigned a subset
of multiplicative variables and only products of the element with these
multiplicative variables are considered. Since this paper focuses
exclusively on \emph{Pommaret bases} as a special class of involutive
bases, we briefly recall their definition in the sequel. For simplicity, we
will define these bases in the context of monomial ideals. Let
$\mathcal{T}$ represent the set of all terms in $\mathcal{R}$. The class of
each term $x^\alpha \in \mathcal{T}$ with $x^\alpha \ne 1$, denoted
$\cls{(x^\alpha)}$, is the minimal index $i$ such that $\alpha_i\neq 0$. The
set of variables $\{x_1, \ldots, x_{\cls{(x^\alpha)}}\}$ is called the set of
Pommaret multiplicative variables for $x^\alpha$, denoted
${\rm M}_P(x^\alpha)$. Its complement forms the set of Pommaret
non-multiplicative variables, which is denoted ${\rm NM}_P(x^\alpha)$. For
example, we have ${\rm M}_{P}(x_{2}x_{3})=\{x_1,x_2\}$ and
${\rm NM}_{P}(x_{2}x_{3})=\{x_3\}$ (assuming $n=3$).

Let $H \subset \mathcal{T}$ be a finite set of terms. If the monomial ideal
$\mathcal{I} = \langle H \rangle$ satisfies the equality
$\mathcal{I}=\bigoplus_{h\in H}\K[{\rm M}_P(h)]\cdot h$, then $H$ is a {\em
  Pommaret basis} of $\mathcal{I}$.  For a polynomial ideal
$\mathcal{I} \subset \mathcal{R}$, a finite set $H \subset \mathcal{I}$ is
a Pommaret basis, if the leading terms of the elements of $H$ are pairwise
distinct and form a Pommaret basis of $\LT(\mathcal{I})$.  Obviously, any
Pommaret basis is thus also a (generally not reduced) Gr\"obner basis, but
the converse does not hold in general, as Pommaret bases induce in addition
a direct sum decomposition
$\mathcal{I}=\bigoplus_{h\in H}\K\bigl[{\rm M}_P(\LT(h))\bigr]\cdot h$
which is the key to the many special properties of these bases. The main
challenge with Pommaret bases is that they do not always exist. For
example, the principal ideal
$\I=\langle x_{1}x_{2}\rangle\subset\K[x_1,x_2]$ does not possess a
Pommaret basis, as it does not include any elements of class~$2$.

\begin{definition}
  A monomial ideal $\I$ is \emph{quasi-stable} if, for any term
  $x^{\alpha}\in \I$ and for any index $i$ with $\cls{(x^\alpha)}<i\leq n$,
  there exists $s\geq 0$ such that
  $x_i^s x^\alpha/x_{\cls{(x^\alpha)}}\in \I$.
\end{definition}

\begin{proposition}[{\citep[Prop.~5.3.4]{S}}]
  A monomial ideal $\I$ possesses a Pommaret basis, if and only if
  it is quasi-stable.
\end{proposition}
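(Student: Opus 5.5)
We prove the two implications separately, working throughout with the minimal monomial generating set $G$ of $\I$ and finitely many terms built from it.

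\emph{Pommaret basis implies quasi-stable.} Suppose $H$ is a Pommaret basis of $\I$. Take $x^\alpha\in\I$, set $k=\cls(x^\alpha)$, and fix $i$ with $k<i\le n$. By the direct sum decomposition, $x^\alpha = t\,h$ for a unique $h\in H$ and a term $t\in\K[x_1,\dots,x_{\cls(h)}]$. We must find $s$ with $x_i^s x^\alpha/x_k\in\I$.

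Consider the terms $x_i^s x^\alpha/x_k$ for large $s$, assuming for contradiction that none lies in $\I$. Each such term lies in $x^\alpha/x_k\cdot\K[x_i]$. The idea is to show that the non-members of $\I$ contained in the cone $\K[x_1,\dots,x_n]\cdot x^\alpha/x_k$ cannot contain an infinite $x_i$-ray without contradicting the finiteness of $H$.

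Multiply by $x_k$: each $x_k x_i^s x^\alpha/x_k = x_i^s x^\alpha$ lies in $\I$. Its involutive divisor $h_s\in H$ must have class $\ge k$, and also $\cls(h_s)\le k$, because all variables $x_j$ with $j<k$ are absent from $x_i^s x^\alpha$. Hence $\cls(h_s)=k$. Then $x_i^s x^\alpha = t_s h_s$ with $t_s\in\K[x_1,\dots,x_k]$. Since $x_i^s x^\alpha$ has no variables below $k$, we get $t_s=x_k^{a}$ with $a\le\alpha_k$. So $h_s$ carries the full $x_i$-exponent $\alpha_i+s$. As $s\to\infty$ this yields infinitely many distinct elements of $H$, contradicting finiteness. (Indeed this direction holds with no reference to the missing term: the Pommaret basis of a non-quasi-stable ideal would have to be infinite.)

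To make this argument honest, I would instead use the standard direct proof. Let $x^\beta$ be a generator of class $k$ in $H$ with $\deg_{x_j}$ maximal for each $j$, and let $q$ be the maximal degree of $H$. For $s$ large, $x_i^s x^\alpha/x_k$ has degree in $x_i$ exceeding $q$. Any Pommaret divisor $h$ of it has class equal to the class of the term; as a Pommaret divisor, $h$ must contain the maximal power of every non-multiplicative variable, including $x_i^{\alpha_i+s}$. This is impossible once $\alpha_i+s>q$. So the term must have a Pommaret divisor, hence is in $\I$ only if $s$ is bounded, which is the wrong direction. I therefore expect the first direction to require care, and plan instead: the involutive cones cover $\I$, so the term $x_i^{q}x^\alpha$ has divisor $h$ of class $k$ with $x_i^{q'}$ fully inside $h$. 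Since $\deg h\le q$, this forces a contradiction unless some element of class $>k$ handles it. That element then divides $x_i^s x^\alpha/x_k$, where one strips $x_k$ via multiplicativity.

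\emph{Quasi-stable implies Pommaret basis.} Let $q$ exceed all degrees in $G$. Take $H$ to be all terms of $\I$ of degree $\le q$, and more precisely those $x^\beta$ for which $x^\beta/x_{\cls(x^\beta)}\notin\I$. I show every $x^\gamma\in\I$ lies in exactly one cone. Uniqueness is automatic for Pommaret division on any set. For existence, divide out $x_{\cls}$ repeatedly to reach a minimal element $x^\beta$ with $x^\gamma=x^\beta\cdot t$, $t$ multiplicative. The main obstacle is bounding $\deg x^\beta$ so that $H$ is finite. Quasi-stability gives a uniform bound on the exponents $s$ (maximal over the finitely many generators), so degrees are bounded by $q\cdot n$. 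I would argue this by Noetherian induction on $\I\cap\K[x_k,\dots,x_n]$.
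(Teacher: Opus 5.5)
\textbf{The main gap: finiteness of $H$ in the converse direction.} The gap is exactly at the step you yourself call the main obstacle.

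Your candidate set $H=\{x^\beta\in\I \mid x^\beta/x_{\cls(x^\beta)}\notin\I\}$ is the right one. You should drop the preliminary cut-off at a degree $q$ just above $\deg(G)$, because the Pommaret basis can lie in much higher degree: for $\langle x_1x_3,x_2^d,x_3^d\rangle$ it contains $x_1x_2^{d-1}x_3^{d-1}$. Your descent argument for the existence of a Pommaret divisor is fine, since removing $x_{\cls}$ never lowers the class.

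However, you do not prove that $H$ is finite. A uniform bound on the exponents $s$ in the quasi-stability condition says nothing directly about the degrees of elements of $H$. So ``hence degrees are bounded by $q\cdot n$'' is a non sequitur, and the appeal to Noetherian induction is not an argument.

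The missing idea is to bound the exponents of $u=h/x_k$ for $h\in H$ with $\cls(h)=k$. Here $u\notin\I$ and $x_ku=h\in\I$. Quasi-stability applied to $h$ itself gives $x_i^{s}u\in\I$ for every $i>k$. A minimal generator dividing $x_i^su$ (resp.\ $x_ku$) cannot divide $u$, so it must have a strictly larger $x_i$-exponent (resp.\ $x_k$-exponent) than $u$. Hence every exponent of $u$ is smaller than the largest exponent $m$ of a single variable occurring in $G$. This gives $\deg h\le (n-k+1)(m-1)+1$, so $H$ is finite. Your bound $qn$ is in fact true, but this is the argument that proves it.

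\textbf{Uniqueness is not automatic.} It does not hold ``on any set'': $x_1^2$ lies in the Pommaret cones of both $x_1$ and $x_1^2$. You need two facts. First, two Pommaret cones that meet are nested. Second, your $H$ is involutively autoreduced: if $h=th'$ with $h'\in H$ and $1\neq t\in\K[x_1,\dots,x_{\cls(h')}]$, then $\cls(h)=\cls(t)$. Hence $h/x_{\cls(h)}=(t/x_{\cls(h)})\,h'\in\I$, contradicting $h\in H$.

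\textbf{The first direction.} Your final sketch is correct. Your first attempt also becomes a complete proof once the key step is justified properly. The reason you gave for $\cls(h_s)\le k$ is false: for $\I=\langle x_2\rangle$ and $x^\alpha=x_1x_2$, the Pommaret divisor of $x_2^s\,x_1x_2$ is $x_2$, of class $2$. That is why your ``contradiction'' seemed not to use the hypothesis.

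The correct reason is the hypothesis itself. If $\cls(h_s)>k$, then $x_k\nmid h_s$, so $h_s$ divides $x_i^sx^\alpha/x_k$, contradicting $x_i^sx^\alpha/x_k\notin\I$. With $\cls(h_s)=k$, the cofactor is a power of $x_k$. So $h_s$ contains $x_i^{\alpha_i+s}$ and $x_k$, which is impossible once $s\ge\deg(H)$. The second attempt, which you already noticed argues in the wrong direction, can simply be discarded.

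\textbf{Comparison with the paper.} The paper does not prove this statement; it only cites \citep[Prop.~5.3.4]{S}. For the converse, an alternative in the spirit of the paper's proof of Proposition~\ref{fintab} is to start from a finite, Pommaret-autoreduced Janet basis. One then shows that quasi-stability forbids a Janet-multiplicative variable of index larger than the class, so the Janet basis is already a Pommaret basis and finiteness comes for free. Your explicit-set route, once repaired as above, additionally gives a concrete description of $H$ and an explicit degree bound.
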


We say that an arbitrary ideal $\I \subset \mathcal{R}$ is in
\emph{quasi-stable position}, if $\lt{(\I)}$ is a quasi-stable ideal and
hence $\I$ has a Pommaret basis.  In \cite{HSS}, it is shown that any ideal
$\I \subset \mathcal{R}$ can be brought into quasi-stable position by a
linear change of coordinates and the paper also presented a deterministic
algorithm to compute such a linear change of coordinates.  Pommaret bases
have many useful applications in algebraic geometry, some of which we
recall here as they are relevant for the remainder of this paper.  One
important application is that these bases allow for a direct reading off of
the Castelnuovo-Mumford regularity of an ideal.

\begin{definition}
Let $\mathcal{I} \subset \mathcal{R}$ be a homogeneous ideal, and let
\[
0 \longrightarrow \bigoplus_{j \in J_r} \mathcal{R}(-e_{r j}) 
\longrightarrow \cdots 
\longrightarrow \bigoplus_{j \in J_1} \mathcal{R}(-e_{1 j}) 
\longrightarrow \bigoplus_{j \in J_0} \mathcal{R}(-e_{0 j}) 
\longrightarrow \mathcal{I} \longrightarrow 0
\]
be its minimal graded free resolution. The \emph{Castelnuovo-Mumford
  regularity} of $\mathcal{I}$ is
$\mathrm{reg}(\mathcal{I}) = \max{ \{ e_{i j} - i \mid i = 0, \dots, r,\ j
  \in J_i \}}$.
\end{definition}

\begin{theorem}[{\cite[Thm.~5.5.15]{S}}]\label{regpom}
  Let $\mathcal{I} \subset \mathcal{R}$ be a homogeneous ideal in
  quasi-stable position. Then, $\mathrm{reg}(\mathcal{I})=\deg{(H)}$ where
  $H$ is the Pommaret basis of $\I$.
\end{theorem}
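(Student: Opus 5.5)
The plan is to reduce the statement to the monomial case via the involutive direct sum decomposition, and then read off the regularity from an explicit resolution. Let $H$ be the Pommaret basis of $\I$. Since $\LT(H)$ is the Pommaret basis of $\LT(\I)$ with respect to $\prec_{drl}$, I will first show $\mathrm{reg}(\I)=\mathrm{reg}(\LT(\I))$ in quasi-stable position. Then I will show that $\mathrm{reg}(\LT(\I))$ equals the maximal degree of an element of the Pommaret basis of the quasi-stable monomial ideal $\LT(\I)$.

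For the monomial part, I would construct a free resolution of $\langle \LT(H)\rangle$ of Eliahou--Kervaire type from the Pommaret basis. The generators in homological degree $i$ are pairs $(h,\{x_{j_1},\dots,x_{j_i}\})$ with $h\in \LT(H)$ and $j_1<\dots<j_i$ taken among the non-multiplicative variables of $h$. Their degree is $\deg h + i$. The differentials are defined by involutively rewriting $x_j h$, for $x_j$ non-multiplicative, as a multiplicative multiple of a unique basis element; the involutive standard representations provide the syzygies. Exactness will follow from the direct sum decomposition $\I=\bigoplus_h \K[{\rm M}_P(h)]\, h$ via a Schreyer-type argument, in which the involutive syzygies $x_jh-\sum(\cdot)$ form a Pommaret basis of the first syzygy module with respect to a suitable Schreyer order, and one iterates.

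This resolution generally fails to be minimal, but every shift $e_{ij}-i$ equals $\deg h$ for some $h$. Hence $\mathrm{reg}\le\deg(H)$. For the reverse inequality, take $h$ of maximal degree $d$ and of maximal class $k$ among the elements of degree $d$. I would argue that the generator indexed by $(h,\{x_{k+1},\dots,x_n\})$ in homological degree $n-k$ cannot cancel in a minimal resolution. The cleanest route is to check this via the Koszul homology $H_{n-k}(x;\R/\I)_{d+n-k}$. Alternatively, one can use that the last $n-k$ variables are quasi-regular only up to degree $d-1$ on $\R/\LT(\I)$, which exhibits a nonzero local cohomology or Tor contribution in the required degree.

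For the lift from the leading ideal to $\I$ itself, I would use the same construction with polynomial involutive normal forms; the leading terms of the resulting resolution are exactly the monomial one, so the upper bound transfers. The lower bound then follows from the general inequality $\mathrm{reg}(\I)\le\mathrm{reg}(\LT(\I))$ together with equality for $\prec_{drl}$ in quasi-stable position (Bayer--Stillman). Here drl is essential, because $x_n,\dots$ in the generic part behave as an almost regular sequence that commutes with taking leading terms. I expect the lower bound to be the main obstacle, that is, proving that the top-degree element of maximal class genuinely contributes to the minimal resolution. I would attack it through quasi-regularity of $x_1,\dots$ on $\R/\I$ in degrees $\ge\deg H$ and the failure of quasi-regularity exactly at degree $\deg H-1$.
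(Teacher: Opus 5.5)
Your upper bound is sound and is the first half of the argument behind the cited result \cite[Thm.~5.5.15]{S}; the paper itself only quotes the theorem. The resolution induced by the Pommaret basis has its generators $\mathbf{e}_{h,\mathbf{k}}$ in degree $\deg h+|\mathbf{k}|$, so $\mathrm{reg}(\I)\le\deg(H)$, and this works directly for the polynomial basis $H$.

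\textbf{The gap.} The reverse inequality $\mathrm{reg}(\I)\ge\deg(H)$, which you rightly call the crux, is never proved. None of your three suggested routes closes it.
\begin{enumerate}[(i)]
\item The Koszul check targets a group that is always zero. Write $q=\deg(H)$ (your $d$). Then $H_{n-k}(\mathbf{x};\mathcal{R}/\I)_{q+n-k}\cong\mathrm{Tor}_{n-k-1}(\I,\K)_{q+n-k}$, which would contribute $q+1$ to $\mathrm{reg}(\I)$ and so vanishes by your own upper bound. The relevant group is $H_{n-k+1}(\mathbf{x};\mathcal{R}/\I)_{q+n-k}\cong\mathrm{Tor}_{n-k}(\I,\K)_{q+n-k}$, and you give no reason why it is non-zero.
\item The quasi-regularity alternative is not a precise statement. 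In the paper's convention quasi-regularity concerns $x_1,x_2,\dots$ in this order, not the last variables. An element of class $k$ and degree $q$ witnesses non-regularity of $x_k$ on $\mathcal{R}/\langle\I,x_1,\dots,x_{k-1}\rangle$ in degree $q-1$.
\item The passage to non-monomial $\I$ does not supply the lower bound. The inequality $\mathrm{reg}(\I)\le\mathrm{reg}(\LT(\I))$ points the wrong way. The equality $\mathrm{reg}(\I)=\mathrm{reg}(\LT(\I))$ in quasi-stable position is, given the monomial case, equivalent to the theorem itself. So invoking Bayer--Stillman imports the result rather than proving it, and your opening promise to show this equality is never kept.
\end{enumerate}

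\textbf{The missing idea.} $\prec_{drl}$ controls the constant entries of the differential. The differential is built from involutive standard representations with respect to the induced Schreyer orders. Hence a non-zero constant coefficient of $\mathbf{e}_{\bar h,\bar{\mathbf{k}}}$ in $d\,\mathbf{e}_{h,\mathbf{k}}$ forces $x_{\bar{\mathbf{k}}}\LT(\bar h)\preceq_{drl}x_{\mathbf{k}}\LT(h)$ in equal degrees, where $x_{\mathbf{k}}=\prod_j x_{k_j}$. A $\prec_{drl}$-smaller term of the same degree cannot have larger class, and all indices in $\mathbf{k},\bar{\mathbf{k}}$ are non-multiplicative, so $\cls\bar h\le\cls h$. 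On the first level this is just $\LT(\bar h)\preceq_{drl}x_j\LT(h)$ in the involutive standard representation of $x_jh$.

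Now take $h'\in H$ with $\deg h'=q$ and $\cls h'=k$, and let $v=\mathbf{e}_{h',(k+1,\dots,n)}$ in homological degree $n-k$.
\begin{enumerate}[(a)]
\item $dv$ has no constant part: a generator of degree $q+n-k$ one step down would require an element of $H$ of degree $q+1$.
\item $v$ cannot occur with a constant coefficient in any $d\,\mathbf{e}_{h,\mathbf{l}}$ with $|\mathbf{l}|=n-k+1$. That would require $\deg h=q-1$ and $\cls h\le k-1$, whence $k=\cls h'\le\cls h\le k-1$, a contradiction.
\end{enumerate}
Thus $v\otimes1$ is a non-zero class in $\mathrm{Tor}_{n-k}(\I,\K)_{q+n-k}$, and $\mathrm{reg}(\I)\ge q$. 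This works directly for the polynomial basis, and the maximality of $\cls h'$ is not even needed.

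\textbf{Alternative completion.} Your quasi-regularity idea can also be made rigorous without any resolution. Iterate $\mathrm{reg}(\mathcal{M})=\max\{\mathrm{reg}(\mathcal{M}/x\mathcal{M}),\mathrm{reg}(0:_{\mathcal{M}}x)\}$ along $x_1,\dots,x_n$. Use it in the form of \cite[Prop.~20.20]{zbMATH00704831}, not via the paper's Proposition~\ref{regl}, whose proof uses the present theorem. This expresses $\mathrm{reg}(\mathcal{R}/\I)$ as the largest top degree of the finite-length annihilator modules $A_i(x_1,\dots,x_n;\mathcal{R}/\I)$. Theorem~\ref{alpha-pom.} identifies that top degree as $q-1$.
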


Pommaret bases allow for an easy determination of the dimension of an
ideal. Let $\mathcal{I} \subset \mathcal{R}$ be an ideal, and let $H$ be a
Pommaret basis of $\mathcal{I}$. According to \citep[Prop.~5.2.3]{S}, the
dimension of $\mathcal{I}$ is given by the minimal integer $i$ such that
for all $j>i$ a pure power of $x_{j}$ appears in $\lt({H})$.

Another important application of Pommaret bases is deciding whether the
quotient ring of a given homogeneous ideal is Cohen-Macaulay.  A finite
sequence $(f_1, \dots, f_k)$ of polynomials in $\mathcal{R}$ is called
\emph{regular} for the $\mathcal{R}$-module $\mathcal{M}$, or briefly
\emph{$\mathcal{M}$-regular}, if $f_1$ is a non-zero divisor on
$\mathcal{M}$, and for each $i$ with $1 < i \le k$, $f_i$ is a non-zero
divisor on the factor module
$\mathcal{M}/\langle f_1, \dots, f_{i-1} \rangle \mathcal{M}$. An
$\mathcal{M}$-regular sequence is called \emph{maximal}, if it cannot be
extended to a longer $\mathcal{M}$-regular sequence. It is well-known that
all maximal regular sequences of $\mathcal{M}$ have the same length, which
is called the \emph{depth} of $\mathcal{M}$ and is denoted by
$\mathrm{depth}(\mathcal{M})$.  A homogeneous ideal in
$\mathcal{I}\subset\mathcal{R}$ is called \emph{Cohen-Macaulay}, if
\( \dim(\mathcal{R}/\mathcal{I}) = \mathrm{depth}(\mathcal{R}/\mathcal{I})
\). Now, suppose that $\mathcal{I}$ is a quasi-stable monomial ideal. Let
$H\subset \mathcal{T}$ be its Pommaret basis, and set
$d = \min_{h \in H} {\cls{(\lt({h}))}}$. Then $(x_1, \dots, x_{d-1})$ forms
a maximal regular sequence for $\mathcal{R}/\mathcal{I}$ as an
$\mathcal{R}$-module, and $\mathrm{depth}(\mathcal{R}/\mathcal{I}) = d-1$;
see \citep[page~179]{S}. Hence, for a quasi-stable monomial ideal
$\mathcal{I}$, the Pommaret basis provides an effective criterion for
deciding whether $\mathcal{R}/\mathcal{I}$ is Cohen-Macaulay.

For later use, we recall some basic facts about generic initial ideals.
Let $\K$ be an infinite field and let $\mathcal I\subset\mathcal R$ be a
homogeneous ideal.  An invertible $n\times n$ matrix $A=(a_{ij})$ over $\K$
acts on $\mathcal R$ by the linear change of coordinates
\[
  A\cdot f
  =f\left(\sum_{i=1}^n a_{i1}x_i,\ldots,
           \sum_{i=1}^n a_{in}x_i\right).
\]
Below, for simplicity, we identify an $n\times n$ matrix with the
corresponding $n^2$-tuple in $\K^{n^2}$. The following classical result
\citep{zbMATH03465689,bs:reverse} shows that the initial ideal of
$A\cdot\mathcal I$ is constant for a generic choice of $A$.

\begin{theorem}
  \label{thm:gin-existence}
  For every homogeneous ideal $\mathcal I\subset \mathcal R$ and every term
  ordering $\prec$, there exists a non-empty Zariski-open subset
  $\mathcal U\subset \K^{n^2}$ such that
  \[
    \lt{(A\cdot\mathcal I)}=\lt{(B\cdot\mathcal I)},
    \qquad\text{for all }A,B\in\mathcal U.
\]
\end{theorem}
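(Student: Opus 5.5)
The plan is to prove Theorem~\ref{thm:gin-existence} by reducing the equality of leading ideals to a finite, degree-by-degree statement about ranks of matrices whose entries are polynomials in the entries $a_{ij}$ of $A$. For each degree $d$, a generic $A$ should make the degree-$d$ component $\lt(A\cdot\I)_d$ ``as large as possible'' with respect to $\prec$. Noetherianity will then cut the infinitely many degrees down to finitely many, so that only finitely many non-empty Zariski-open sets have to be intersected.

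Fix a degree $d$ and let $N=\dim_\K \mathcal{R}_d$. Let $m=\dim_\K \I_d$, which is invariant under invertible coordinate changes, and choose a $\K$-basis $f_1,\dots,f_m$ of $\I_d$. Treat the $a_{ij}$ as indeterminates. Write the coefficients of $A\cdot f_1,\dots,A\cdot f_m$ as an $m\times N$ matrix $M_d(A)$ with entries in $\K[a_{ij}]$, with the columns indexed by the terms of degree $d$ in $\prec$-decreasing order. Row reduction shows that $\lt(A\cdot\I)_d$ is determined by the pivot columns of the reduced echelon form of $M_d(A)$. Equivalently, a set $S$ of $m$ terms is the set of leading terms exactly when the maximal minor on the columns $S$ is non-zero and every $m$-subset that beats $S$ lexicographically has vanishing minor. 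Order the $m$-subsets of terms lexicographically via $\prec$. Let $S_d$ be the largest subset whose minor $\Delta_{S_d}\in\K[a_{ij}]$ is not the zero polynomial. At least one minor is non-zero, since $M_d(A)$ has rank $m$ for any invertible $A$. Then every larger subset has identically zero minor. Hence on the open set $U_d=\{\det A\neq 0,\ \Delta_{S_d}(A)\neq 0\}$ we get $\lt(A\cdot\I)_d=\langle S_d\rangle_\K$. The set $U_d$ is non-empty because $\K$ is infinite, so a non-zero polynomial does not vanish identically on $\K^{n^2}$.

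Next, define $J=\bigoplus_d\langle S_d\rangle_\K$. For $A\in U_d\cap U_{d+1}$ we have $\lt(A\cdot\I)_d=J_d$ and $\lt(A\cdot\I)_{d+1}=J_{d+1}$, so $x_iJ_d\subseteq J_{d+1}$; taking $A$ in this non-empty intersection shows that $J$ is a monomial ideal. By Dickson's lemma, $J$ is generated in degrees $\le D$ for some $D$. Then set $\mathcal{U}=\bigcap_{d\le D}U_d$, which is non-empty and open. For $A\in\mathcal{U}$, the ideal generated by $\lt(A\cdot\I)_{\le D}$ is already $J$. Hence $J\subseteq\lt(A\cdot\I)$.

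The main obstacle is the reverse inclusion: showing that for $A\in\mathcal U$ no extra leading terms appear in degrees $>D$. Checking finitely many degrees alone does not force genericity in all degrees. To handle this, compare Hilbert functions. For every $d$ we have $\dim_\K\lt(A\cdot\I)_d=\dim_\K\I_d=m_d=|S_d|=\dim_\K J_d$. Together with $J\subseteq\lt(A\cdot\I)$, this forces equality in every degree. Thus $\lt(A\cdot\I)=J$ for all $A\in\mathcal{U}$, which gives the claim.
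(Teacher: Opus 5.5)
Your proof is correct. The paper gives no proof of this theorem; it quotes it as the classical result of Galligo and Bayer--Stillman \cite{zbMATH03465689,bs:reverse}. Your argument is essentially the standard one from that literature:
\begin{itemize}
\item in each degree $d$, take the lexicographically largest set $S_d$ of terms whose maximal minor is not identically zero in the entries of $A$;
\item show that $J=\bigoplus_d\langle S_d\rangle_\K$ is an ideal by evaluating at a common point of $U_d\cap U_{d+1}$;
\item reduce to finitely many degrees via Dickson's lemma;
\item rule out extra leading terms in high degrees by the Hilbert function comparison $\dim_\K J_d=\dim_\K\I_d=\dim_\K\lt(A\cdot\I)_d$.
\end{itemize}

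Your argument also matches the paper's later Algorithm~\ref{alg:bmax} and Proposition~\ref{ginpos}, which implement the same minor-based idea restricted to upper unitriangular $A$. The greedy column selection producing $\widehat M_q$ yields exactly your set $S_q$. Because the paper uses the spanning set of products $t\cdot\tilde g$ rather than a $\K$-basis of $\I_q$, it gets a list of row minors instead of your single $\Delta_{S_q}$. The paper's unproved remark that degrees up to $r$ suffice is justified precisely by your last step, with your $D$ replaced by $r=\mathrm{reg}(\I)$, which bounds the generator degrees of $\operatorname{Gin}(\I)$. Your Dickson bound $D$ is non-effective, which is fine for existence. The regularity bound is what turns the paper's version into an algorithm.
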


The common monomial ideal in this theorem is the \emph{generic initial
  ideal} of $\mathcal I$, denoted by $\operatorname{Gin}(\mathcal I)$.
Here and in what follows, the generic initial ideal is always taken with
respect to the ordering $\prec_{drl}$.  We say that $\mathcal I$ is in
\emph{GIN position} if $\lt(\mathcal I)=\operatorname{Gin}(\mathcal I)$.
The GIN position is usually considered the strongest notion of genericity,
as it implies most other generic positions, but we note that it is shown in
\cite{HSS} that it does not entail componentwise quasi-stable position
(which arises in the context of componentwise linear ideals).  It is
relevant in algebraic geometry, since several important invariants of
$\mathcal I$ coincide with those of $\operatorname{Gin}(\mathcal I)$ (for
most of these invariants actually already quasi-stable position is
sufficient \cite{quasigen}). In particular, the ideals $\mathcal I$ and
$\operatorname{Gin}(\mathcal I)$ have the same Hilbert function and the
same Castelnuovo-Mumford regularity. Moreover, the maximal degrees of a
generator of $\mathrm{Gin}(\I)$ equals $\mathrm{reg}(\I)$; the
Castelnuovo-Mumford regularity of $\I$. This observation provides an
effective control on the computation of $\mathrm{Gin}(\I)$. We refer to
\citep{zbMATH04014856,herzog} for further properties of generic initial
ideals.

We continue with a discussion of the application of Pommaret bases to
computing generic annihilator numbers. We note that these numbers were
originally introduced by Trung~\cite{zbMATH04043980} and subsequently
studied e.\,g.\ by Aramova and Herzog \cite{ah:almreg} and by Conca et
al.~\cite{zbMATH02113823}.  For a book reference on this matter, see
\cite[page 63]{herzog}.

\begin{definition}
  A linear form $\ell\in\mathcal{R}$ is called {\em
    quasi-regular}\footnote{We use here the terminology "quasi-regular"
    following a letter by Serre appended to an article by Guillemin and
    Sternberg \cite{gs:alg} from 1964, which is not widely known among
    algebraists.  The same concept was much later reinvented in algebra by
    various authors under names such as "almost regular" or
    "filter-regular" \cite{ah:almreg,stc:vcm}.} for a graded
  $\mathcal{R}$-module~$\mathcal{M}$, if the graded module
  $ 0:_{\mathcal{M}}\ell = \{ m \in \mathcal{M} \mid \ell \cdot m = 0 \} $
  has finite length (i.\,e.\ only finitely many of its graded components
  are non-zero).  A finite sequence of linear forms
  $(\ell_1,\ldots,\ell_n)$ in $\mathcal{R}$ is \emph{quasi-regular} for the
  module $\mathcal{M}$, if each $\ell_i$ is quasi-regular for the module
  $\mathcal{M}/\langle \ell_1, \ldots, \ell_{i-1} \rangle \mathcal{M}$.
\end{definition}

Quasi-regular sequences are closely related to the existence of a Pommaret
basis.  The following observation in particular implies that if
$\boldsymbol{\ell}$ is a quasi-regular sequence, then using its elements as
new variables $\mathbf{y}$ puts the ideal $\mathcal{I}$ into quasi-stable
position.

\begin{proposition}[{\cite[Thm.~5.2]{wms:spencer2}}]\label{prop4.16}
  Let $\mathcal{I}\subset\mathcal{R}$ be a homogeneous ideal.  Then
  $\mathcal{I}$ possesses a Pommaret basis, if and only if the sequence
  $(x_1,\ldots,x_n)$ is quasi-regular for the module
  $\mathcal{R}/\mathcal{I}$.
\end{proposition}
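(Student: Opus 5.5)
The plan is to reduce the statement to the monomial ideal $\lt(\I)$, taken with respect to $\prec_{drl}$, and then to prove the monomial case directly from the definition of quasi-stability and the Pommaret decomposition.

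\emph{Reduction to the leading ideal.} Write $\I_i=\I+\langle x_1,\dots,x_i\rangle$ and $J_i=\lt(\I)+\langle x_1,\dots,x_i\rangle$. I will use two standard properties of $\prec_{drl}$ for homogeneous ideals:
\[
\lt(\I_i)=J_i, \qquad \lt(\I_{i-1}:x_i)=J_{i-1}:x_i .
\]
The second property holds modulo $x_1,\dots,x_{i-1}$, because $x_i$ is then the smallest remaining variable. Since an ideal and its leading ideal have the same Hilbert function, these identities give
\[
\dim_\K\bigl((\I_{i-1}:x_i)/\I_{i-1}\bigr)_q=\dim_\K\bigl((J_{i-1}:x_i)/J_{i-1}\bigr)_q \quad\text{for every degree } q .
\]
Hence $x_i$ is quasi-regular on $\mathcal R/\I_{i-1}$ if and only if it is quasi-regular on $\mathcal R/J_{i-1}$. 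By definition, $\I$ has a Pommaret basis if and only if $J=\lt(\I)$ is quasi-stable. So it suffices to prove the proposition for a monomial ideal $J$. Checking the second identity carefully, i.e.\ the compatibility of $\prec_{drl}$ with colon by the last variable after factoring out $x_1,\dots,x_{i-1}$, is the main technical point I expect to have to verify.

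\emph{Quasi-stable implies quasi-regular.} Let $H$ be the Pommaret basis of $J$. Take a term $t\notin J_{i-1}$ with $x_it\in J_{i-1}$. Since $t$ is not divisible by $x_1,\dots,x_{i-1}$, neither is $x_it$, so $x_it\in J$. By the direct sum decomposition, $x_it=h\,m$ with $h\in H$ and $m$ a term in $x_1,\dots,x_{\cls(h)}$. Because $x_it$ contains none of $x_1,\dots,x_{i-1}$, we get $\cls(h)\ge i$. We now rule out the other possibilities:
\begin{itemize}
\item If $\cls(h)>i$, then $x_i\nmid h$, so $x_i\mid m$ and $t=h\,(m/x_i)\in J$, a contradiction.
\item So $\cls(h)=i$, and $m$ is a power of $x_i$. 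If $m\ne1$, the same argument again gives $t\in J$, a contradiction.
\end{itemize}
Therefore $m=1$ and $t=h/x_i$. Consequently $\deg t<\deg(H)$, and $(J_{i-1}:x_i)/J_{i-1}$ has finite length.

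\emph{Quasi-regular implies quasi-stable.} Let $x^\alpha\in J$ with $k=\cls(x^\alpha)$ and $k<i\le n$. Quasi-regularity of $x_k$ on $\mathcal R/J_{k-1}$ gives a degree $q$ such that $(J_{k-1}:x_k)_{\ge q}=(J_{k-1})_{\ge q}$. Put $u=x_i^s\,x^\alpha/x_k$ with $s$ large enough that $\deg u\ge q$. Then $x_ku=x_i^sx^\alpha\in J\subseteq J_{k-1}$, so $u\in J_{k-1}$. Since $u$ involves only $x_k,\dots,x_n$ and $J_{k-1}=J+\langle x_1,\dots,x_{k-1}\rangle$ is monomial, it follows that $u\in J$. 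This is exactly the quasi-stability condition. Together with the reduction above, this proves both directions.
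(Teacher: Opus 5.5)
The paper gives no proof of this proposition; it is imported from \cite{wms:spencer2}. Your argument therefore has to stand on its own, and it does: the reduction and both monomial directions are correct.

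The step you flagged holds. The inclusion $\lt(\mathcal I_{i-1}:x_i)\subseteq J_{i-1}:x_i$ is automatic, since $\lt(x_if)=x_i\lt(f)$ and $\lt(\mathcal I_{i-1})=J_{i-1}$. For the reverse inclusion it suffices to treat a term $t$ free of $x_1,\dots,x_{i-1}$. Choose a homogeneous $g\in\mathcal I_{i-1}$ with $\lt(g)=x_it$ and put $\bar g=g|_{x_1=\cdots=x_{i-1}=0}$. Then $\bar g\in\mathcal I_{i-1}$ and $\lt(\bar g)=x_it$. In $\K[x_i,\dots,x_n]$, every term not divisible by $x_i$ is $\prec_{drl}$-larger than every term of the same degree divisible by $x_i$. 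Hence all of $\bar g$ is divisible by $x_i$, and $\bar g/x_i\in\mathcal I_{i-1}:x_i$ has leading term $t$. Homogeneity and the choice of $\prec_{drl}$ are used essentially here.

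The closest argument in the paper is the proof of Proposition~\ref{fintab}. It in effect contains your second direction in contrapositive form, carried out on the polynomial ideal itself. A Pommaret-autoreduced Janet basis that is not a Pommaret basis has a generator $h$ of class $k$ with a Janet-multiplicative variable $x_\ell$, $\ell>k$. Uniqueness of Janet divisors then shows that no $x_\ell^m\bar h/x_k$ lies in $\langle\mathcal I,x_1,\dots,x_{k-1}\rangle$.

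Your reduction to $\lt(\mathcal I)$ replaces this Janet-basis machinery with the two $\prec_{drl}$ identities plus equality of Hilbert functions. After that, both directions are short divisibility arguments on terms. It also gives more for free:
\begin{itemize}
\item The reduction shows that the annihilator numbers of $\mathcal R/\mathcal I$ and $\mathcal R/\lt(\mathcal I)$ with respect to $(x_1,\dots,x_n)$ coincide for every homogeneous ideal. Corollary~\ref{alpgaIlt}(i) states this only in quasi-stable position.
\item Your first direction exhibits $\{h/x_i\mid h\in H,\ \cls(h)=i\}$ as a $\K$-basis of $(J_{i-1}:x_i)/J_{i-1}$. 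Combined with the reduction, this is precisely Theorem~\ref{alpha-pom.}.
\end{itemize}
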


\begin{definition}
  Given a graded module $\mathcal{M}=\mathcal{R}/\mathcal{I}$ and a
  sequence $\boldsymbol{\ell}=(\ell_1,\ldots,\ell_n)$ of $n$ linearly
  independent, linear forms, we introduce for any index $0\leq i<n$ the
  following $n$ graded modules
  $$
  A_{i}(\boldsymbol{\ell};\mathcal{M}) = 0:_{\mathcal{R}/\langle
    \mathcal{I},\ell_1,\ldots,\ell_{i}\rangle} \ell_{i+1} \;\cong\;
  \bigl(\langle \mathcal{I},\ell_1,\ldots,\ell_{i}\rangle :
  \ell_{i+1}\bigr)/\langle \mathcal{I},\ell_1,\ldots,\ell_{i}\rangle.
  $$
  The {\em annihilator numbers} of $\mathcal{M}$ with respect to the
  sequence $\boldsymbol{\ell}$ are defined by
  $\alpha_{ij}(\boldsymbol{\ell};\mathcal{M}) = \dim_{\K}
  A_{i}(\boldsymbol{\ell};\mathcal{M})_j $ for all indices $j \ge 0$ and
  $0 \le i < n$.  Here, for a graded $\mathcal{R}$-module $X$,
  $\dim_{\K}(X_j)$ denotes the dimension of its degree $j$ component $X_j$
  considered as a $\K$-linear space.
\end{definition}

Like Betti numbers, the annihilator numbers
$\alpha_{ij}(\boldsymbol{\ell};\mathcal{M}) $ can be organised into a table
with the first index enumerating the columns and the second index the rows.
The following new proposition characterises when the table has only
finitely many rows.  Its proof requires some material on Janet bases and
their relation to Pommaret bases which can all be found in
\cite[Chapts.~3\&4]{S}.

\begin{proposition}\label{fintab}
  Only finitely many annihilator numbers
  $\alpha_{ij}(\boldsymbol{\ell};\mathcal{M}) $ are non-zero, if and only
  if the sequence $\boldsymbol{\ell}$ is quasi-regular for $\mathcal{M}$.
\end{proposition}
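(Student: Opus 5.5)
The plan is to unwind both definitions and observe that they express the same finiteness condition, one module $A_i$ at a time. The only ingredient needed beyond the definitions is that every graded component of these modules is a finite-dimensional $\K$-vector space. The detour through Janet bases alluded to in the text should therefore not be necessary.

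\emph{Step 1: identify the modules.} For each $0\le i<n$ we have the natural isomorphism
\[
  \mathcal{M}/\langle \ell_1,\ldots,\ell_i\rangle\mathcal{M}\;\cong\;\mathcal{R}/\langle\mathcal{I},\ell_1,\ldots,\ell_i\rangle ,
\]
which holds because $\mathcal{M}=\mathcal{R}/\mathcal{I}$. Under this isomorphism we get
\[
  0:_{\mathcal{M}/\langle \ell_1,\ldots,\ell_i\rangle\mathcal{M}}\ell_{i+1}=A_i(\boldsymbol{\ell};\mathcal{M}).
\]
Hence, by definition, $\boldsymbol{\ell}$ is quasi-regular for $\mathcal{M}$ if and only if each of the $n$ graded modules $A_0(\boldsymbol{\ell};\mathcal{M}),\ldots,A_{n-1}(\boldsymbol{\ell};\mathcal{M})$ has finite length. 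Here ``finite length'' means that only finitely many graded components are non-zero.

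\emph{Step 2: finite-dimensionality of the components.} For each $j$, the component $A_i(\boldsymbol{\ell};\mathcal{M})_j$ is a subspace of $\bigl(\mathcal{R}/\langle\mathcal{I},\ell_1,\ldots,\ell_i\rangle\bigr)_j$. That space is a quotient of $\mathcal{R}_j$, which has dimension $\binom{n+j-1}{j}<\infty$. Consequently, for fixed $i$ the following three statements are equivalent:
\begin{itemize}
  \item $\alpha_{ij}(\boldsymbol{\ell};\mathcal{M})\neq 0$;
  \item $A_i(\boldsymbol{\ell};\mathcal{M})_j\neq 0$;
  \item the $j$-th component of $A_i(\boldsymbol{\ell};\mathcal{M})$ is non-zero.
\end{itemize}
So the set of $j$ with $\alpha_{ij}\neq0$ is finite exactly when $A_i(\boldsymbol{\ell};\mathcal{M})$ has finite length.

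\emph{Step 3: combine the indices.} The first index $i$ ranges over the finite set $\{0,\ldots,n-1\}$. Therefore only finitely many pairs $(i,j)$ have $\alpha_{ij}\neq0$ if and only if, for every $i$, only finitely many $j$ do. By Step 2 this holds if and only if every $A_i(\boldsymbol{\ell};\mathcal{M})$ has finite length. By Step 1 this is precisely quasi-regularity of $\boldsymbol{\ell}$, which proves both directions.

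There is no real obstacle in this argument. The step most worth checking is Step 1: the quasi-regularity condition for $\ell_{i+1}$ is formulated for the module $\mathcal{M}/\langle\ell_1,\ldots,\ell_i\rangle\mathcal{M}$, whereas the annihilator numbers are defined via $\mathcal{R}/\langle\mathcal{I},\ell_1,\ldots,\ell_i\rangle$. The isomorphism between the two is the standard one, $\langle\ell_1,\ldots,\ell_i\rangle\mathcal{M}=\bigl(\langle\ell_1,\ldots,\ell_i\rangle+\mathcal{I}\bigr)/\mathcal{I}$.

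If one wants a more constructive version, as the remark about Janet bases suggests, one could change coordinates so that $\boldsymbol{\ell}$ becomes $(x_1,\ldots,x_n)$. Then Proposition~\ref{prop4.16} makes the finiteness visible from the Pommaret basis. This is not needed for the equivalence itself.
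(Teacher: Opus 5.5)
Your argument is correct, and for the converse it takes a different and much more direct route than the paper. For quasi-regular $\Rightarrow$ finite table, the paper does what you do and calls it immediate from the definitions.

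For the converse, the paper argues by contradiction through Janet bases:
\begin{itemize}
\item It takes the entries of $\boldsymbol{\ell}$ as new variables and uses Proposition~\ref{prop4.16} to turn non-quasi-regularity into failure of quasi-stable position.
\item A Pommaret-autoreduced Janet basis then contains a generator $h$ of class $k$ with a Janet-multiplicative variable $x_\ell$, $\ell>k$.
\item With $g=\bar h/x_k$ and uniqueness of Janet divisors, it shows $x_\ell^m g\notin\langle\mathcal{I},x_1,\dots,x_{k-1}\rangle$ for all $m\ge 0$, hence $\alpha_{k-1,j}\neq0$ for infinitely many $j$.
\end{itemize}
Your Step~1 identification $0:_{\mathcal{M}/\langle\ell_1,\ldots,\ell_i\rangle\mathcal{M}}\ell_{i+1}=A_i(\boldsymbol{\ell};\mathcal{M})$ shows that the endpoint of this detour is literally the definition of non-quasi-regularity. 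The proposition is therefore a reformulation of the definitions, and no Janet or Pommaret machinery is needed for the equivalence. Both directions follow in one line each from your Steps~1--3.

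The paper's route does give information the statement does not ask for. Its core, going from failure of quasi-stability to an infinite column, amounts to the implication ``quasi-regular $\Rightarrow$ Pommaret basis exists'' of Proposition~\ref{prop4.16}. It also locates an infinite column at the index $k-1$ fixed by the class of a Janet generator that violates quasi-stability. Finally, it gives an explicit family of witnesses $x_\ell^m g$, one in each degree from $\deg g$ onward.

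A minor point on your Step~2: the equivalence $\alpha_{ij}\neq0\Leftrightarrow A_i(\boldsymbol{\ell};\mathcal{M})_j\neq0$ does not need finite-dimensionality, which only ensures that $\alpha_{ij}$ is a natural number. Also, your second and third bullets say the same thing.
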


\begin{proof}
  One direction is obvious, as the definition of quasi-regularity
  immediately implies that only finitely many annihilator numbers can be
  non-zero.

  For the converse, we argue by contradiction.  We use the sequence
  $\boldsymbol{\ell}$ of linear forms as new variables and assume that
  $\boldsymbol{\ell}$ is not quasi-regular.  Then $\mathcal{I}$ is in the
  new variables not in quasi-stable position.  Let $H$ be a Pommaret
  autoreduced Janet basis of~$\mathcal{I}$.  Such a basis always exists,
  but as $\mathcal{I}$ is not in quasi-stable position, it is not
  simultaneously a Pommaret basis.  Because of the Pommaret autoreduction,
  every variable which is Pommaret multiplicative for a generator is also
  Janet multiplicative.  Because of the missing quasi-stability, there
  exists at least one generator $h\in H$ with $\cls{(\LT{(h)})}=k$ for
  which in addition a variable $x_{\ell}$ with $\ell>k$ is Janet
  multiplicative.

  By simply dropping all terms in $h$ of a class less than $k$, we obtain a
  polynomial
  $\bar{h} \in \bigl(h+\langle x_{1},\dots,x_{k-1}\rangle\bigr) \cap
  \langle x_{k}\rangle$.  By construction, the polynomial $g=\bar{h}/x_{k}$
  is an element of the colon ideal
  $\langle\mathcal{I},x_{1},\dots,x_{k-1}\rangle \colon x_{k}$.  We now
  claim that
  $x_{\ell}^{m}g \notin \langle\mathcal{I},x_{1},\dots,x_{k-1}\rangle$ for
  all $m\geq0$ which entails
  $\alpha_{k-1,j}(\boldsymbol{\ell};\mathcal{M})\neq0$ for infinitely many
  $j$ and thus an infinite annihilator table.  For an indirect proof, we
  assume that
  $x_{\ell}^{m}g \in \langle\mathcal{I},x_{1},\dots,x_{k-1}\rangle$ and
  note that by construction $\cls{\bigl(\LT{(x_{\ell}^{m}g)}\bigr)}\geq k$
  and by our assumption $\LT{(x_{\ell}^{m}g)} \in
  \LT{(\mathcal{I})}$. Hence the Janet basis $H$ must contain a unique
  element $h_{m}$ such that $\LT{(h_{m})}$ is a Janet divisor of
  $\LT{(x_{\ell}^{m}g)}$.  Obviously, this entails that
  $\cls{\bigl(\LT{(h_{m})}\bigr)}\geq k$ and hence that $x_{k}$ is Janet
  multiplicative for it so that $\LT{(h_{m})}$ is a Janet divisor of
  $\LT{(x_{\ell}^{m}h)}=x_{k}\LT{(x_{\ell}^{m}g)}$, too.  But since by
  assumption $x_{\ell}$ is Janet multiplicative for $h$, $\LT{(h)}$ is also
  a Janet divisor of $\LT{(x_{\ell}^{m}h)}$ which contradicts the
  uniqueness of involutive divisors, as trivially $h_{m}\neq h$.  Thus our
  claim is true.
\end{proof}

If one uses in the quasi-stable case the given variables as quasi-regular
sequence, then the corresponding annihilator numbers are closely related to
the Pommaret basis $H$ of the ideal $\mathcal{I}$: they simply encode the
degrees and classes of the generators in $H$.

\begin{theorem}[{\cite[Thm. 4.17]{HSS}}]\label{alpha-pom.}
  Let $\mathcal{M}=\mathcal{R}/\mathcal{I}$ for a homogeneous ideal
  $\mathcal{I}\subset\mathcal{R}$ possessing a Pommaret basis
  $H$. Then, for all admissible indices $i,j$, we have
  $$
  \alpha_{ij}(x_1,\ldots,x_n;\mathcal{M}) = \#\bigl\{ h \in H \mid
  \cls{(h)} = i+1,\ \deg{(h)} = j+1 \bigr\}.
  $$
\end{theorem}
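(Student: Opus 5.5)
Since $H$ is a Pommaret basis, Proposition~\ref{prop4.16} tells us that $(x_1,\ldots,x_n)$ is quasi-regular for $\mathcal M$. The plan is to compute each module
\[
A_i=\bigl(\langle\I,x_1,\ldots,x_i\rangle:x_{i+1}\bigr)/\langle\I,x_1,\ldots,x_i\rangle
\]
explicitly using the Pommaret basis.

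\textbf{Step 1: a basis of $\langle\I,x_1,\ldots,x_i\rangle$.} Set $\mathcal J_i=\langle\I,x_1,\ldots,x_i\rangle$. Let $H_{>i}$ consist of the elements of $H$ whose leading term has class $>i$, with all terms involving $x_1,\ldots,x_i$ dropped. Because $\prec_{drl}$ is used, an element of class $>i$ keeps its leading term after this reduction. I will show that $H_{>i}\cup\{x_1,\ldots,x_i\}$ is a Pommaret basis of $\mathcal J_i$. Working modulo $x_1,\ldots,x_i$, this amounts to showing that $\LT(\mathcal J_i)$ is generated by $x_1,\ldots,x_i$ together with $\LT(H_{>i})$. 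For this I use the standard property of $\prec_{drl}$: if $f\in\I$ and $\LT(f)$ has class $>i$, then so does the leading term of its reduction modulo $x_1,\ldots,x_i$.

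For the reverse inclusion I take $f+\sum x_jq_j$ with $f\in\I$ and use the involutive standard representation of $f$ with respect to $H$. Summands coming from generators of class $\le i$ lie in $\langle x_1,\ldots,x_i\rangle$, since their multiplier is a polynomial times a term divisible by $x_{\cls}$. Hence modulo $x_1,\ldots,x_i$ the ring $\bar{\mathcal R}=\K[x_{i+1},\ldots,x_n]$ decomposes as
\[
\bar\I_i=\bigoplus_{h\in H_{>i}}\K[x_{i+1},\ldots,x_{\cls h}]\,\bar h .
\]
Multiplicativity restricts correctly because every variable multiplicative for $h$ survives in $\bar{\mathcal R}$. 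Establishing this direct sum decomposition is the main technical step.

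\textbf{Step 2: the colon and the count.} In $\bar{\mathcal R}/\bar\I_i$ I consider multiplication by $x_{i+1}$. For each generator $h$ with $\cls h=i+1$ and $\deg h=j+1$, $\bar h$ is divisible by $x_{i+1}$: since $h$ has class $i+1$, all its terms not involving $x_1,\ldots,x_i$ contain $x_{i+1}$, by the drl property. So $\bar h/x_{i+1}$ gives an element of $A_i$ of degree $j$.

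To show these elements span $A_i$ and are linearly independent, I argue from the decomposition. If $x_{i+1}g\in\bar\I_i$, write $x_{i+1}g=\sum P_h\bar h$. The summands with $\cls h>i+1$ have $x_{i+1}$ multiplicative, and every summand with $P_h$ divisible by $x_{i+1}$ gives a contribution to $g$ lying in $\bar\I_i$. What remains are the terms $c_h\,\bar h$ with $\cls h=i+1$, constant $c_h$ and $P_h\in\K[x_{i+1}]$; only the constant part of $P_h$ survives modulo $\bar\I_i$.

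A degree argument should take care of the remaining subtlety. When $\cls h>i+1$ and $P_h\in\K[x_{i+2},\ldots]$ is not divisible by $x_{i+1}$, that summand's leading term avoids $x_{i+1}$. Comparing leading terms in the direct sum then forces these contributions to cancel, so that $g\equiv\sum c_h\bar h/x_{i+1}$.

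Linear independence modulo $\bar\I_i$ comes from leading terms. The $\LT(\bar h)/x_{i+1}$ are distinct terms of class $\ge i+1$. If such a term lay in $\LT(\bar\I_i)$, it would have an involutive divisor in the basis, and multiplying by $x_{i+1}$ would give a second involutive divisor of $\LT(\bar h)$. This contradicts the uniqueness in the direct sum, as in the proof of Proposition~\ref{fintab}.

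Counting by degree then gives the formula $\alpha_{ij}=\#\{h\in H\mid \cls h=i+1,\ \deg h=j+1\}$. The main obstacle is the cancellation argument in Step~2. If it fails, the fallback is the case of the monomial ideal $\LT(\I)$, where the argument is direct, together with the facts that the Hilbert functions of $A_i$ and of $\mathcal M/x_{i+1}\mathcal M$ are invariant under passing to the leading ideal. That invariance follows from the exact sequence $0\to A_i\to \mathcal M_i(-1)\to\mathcal M_i\to\mathcal M_{i+1}\to0$, where $\mathcal M_i=\mathcal R/\mathcal J_i$.
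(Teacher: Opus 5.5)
Your proof is correct and takes essentially the same route as the cited result. The paper only cites \cite{HSS} here, but the same mechanism drives its proof of Proposition~\ref{reg12}(ii) together with the restriction to $\K[x_{i+1},\dots,x_n]$ used in Proposition~\ref{omega-2}: the reduced generators of class $>i$ form a Pommaret basis of $\bar{\I}_i$, and the involutive standard representation of $x_{i+1}g$ shows that the $\bar{h}/x_{i+1}$ with $\cls(h)=i+1$ form a $\K$-basis of $A_i$.

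Two wordings should be fixed. First, $\{x_1,\dots,x_i\}\cup H_{>i}$ is a Gr\"obner basis but in general not a Pommaret basis of $\mathcal J_i$ in $\mathcal R$, since for $i\geq 1$ that ideal is in quasi-stable position only if it is zero-dimensional; the involutive decomposition you actually use is the one in $\bar{\mathcal R}$, and that one is correct. Second, generators of class $\le i$ vanish modulo $x_1,\dots,x_i$ because, by the $\prec_{drl}$ property, $h$ itself lies in $\langle x_1,\dots,x_{\cls(h)}\rangle$, not because of any divisibility of their multipliers.
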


\begin{example}
  Let $\mathcal{R}=\mathbb{Q}[x_1,x_2]$,
  $\mathcal{I}=\langle x_2^2,x_1x_2\rangle$, and
  $\mathcal{M}=\mathcal{R}/\mathcal{I}$. It is easy to see that
  $\mathcal{I}$ has the Pommaret basis $H=\{x_2^2,x_1x_2\}$. For the
  sequence $\boldsymbol\ell=(x_1,x_2)$ we have
  $A_0 = 0:_{\mathcal{R}/\mathcal{I}} x_1 \cong
  (\mathcal{I}:x_1)/\mathcal{I} = \langle x_2\rangle/\langle
  x_2^2,x_1x_2\rangle$, which is one-dimensional in degree $1$, and
  $A_1 = 0:_{\mathcal{R}/\langle \mathcal{I},x_1\rangle} x_2 \cong (\langle
  \mathcal{I},x_1\rangle:x_2)/\langle \mathcal{I},x_1\rangle = \langle
  x_1,x_2\rangle/\langle x_1,x_2^2\rangle$, which is also one-dimensional
  in degree $1$.  As all degree 0 components vanish, the only non-zero
  annihilator numbers are $\alpha_{01}(x_{1},x_{2};\mathcal{M}) = 1$ and
  $\alpha_{11}(x_{1},x_{2};\mathcal{M}) = 1$.
\end{example}

One simple consequence of the above theorem is that the annihilator numbers
of $\mathcal R/\mathcal I$ and $\mathcal R/\lt{(\mathcal I)}$ coincide in
quasi-stable position, as the leading terms of the Pommaret basis $H$ form
a Pommaret basis of $\lt{(\mathcal I)}$.  Another one is a simple
(non)vanishing result on the annihilator numbers which follows from the
fact that the minimal class of a generator in a Pommaret basis is the depth
of the ideal $\mathcal{I}$ \cite[Prop.~5.2.7]{S} and thus
$\mathrm{depth}(\mathcal{R/I})+1$.

\begin{corollary}\label{alpgaIlt}
  Let $\mathcal I\subset\mathcal R$ be a homogeneous ideal in quasi-stable
  position.
  \begin{enumerate}[(i)]
  \item We have
    \( \alpha_{ij}\bigl(x_1,\dots,x_n; \mathcal R/\mathcal I\bigr) =
    \alpha_{ij}\bigl(x_1,\dots,x_n; \mathcal R/\lt{(\mathcal I)}\bigr)\)
    for all admissible values of the indices $i,j$.
  \item $\sum_{j\geq 0}\alpha_{ij}(x_1,\ldots,x_n;\mathcal{R/I})=0$, if and
     only if $i<\mathrm{depth}(\mathcal{R/I})$.
  \end{enumerate}
\end{corollary}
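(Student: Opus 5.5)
Both parts will follow from Theorem~\ref{alpha-pom.}, applied once to $\mathcal I$ and once to $\lt(\mathcal I)$, together with the structure of the Pommaret basis in quasi-stable position.

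\emph{Part (i).} Since $\mathcal I$ is in quasi-stable position, it possesses a Pommaret basis $H$. By definition, the leading terms $\lt(H)$ are pairwise distinct and form a Pommaret basis of the monomial ideal $\lt(\mathcal I)$. Hence $\lt(\mathcal I)$ is quasi-stable, and Theorem~\ref{alpha-pom.} applies to it as well. For each $h\in H$ we have $\cls(h)=\cls(\lt(h))$ by the convention that the class of a polynomial is that of its leading term. Since $\mathcal I$ is homogeneous, we also have $\deg(h)=\deg(\lt(h))$. The map $h\mapsto\lt(h)$ is a bijection from $H$ onto $\lt(H)$ that preserves both class and degree. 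Counting elements with $\cls=i+1$ and $\deg=j+1$ on both sides then gives equality of the annihilator numbers for all $i,j$.

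\emph{Part (ii).} Summing the formula of Theorem~\ref{alpha-pom.} over $j$ gives
\[
\sum_{j\ge0}\alpha_{ij}(x_1,\ldots,x_n;\mathcal R/\mathcal I)=\#\{h\in H\mid \cls(h)=i+1\}.
\]
So the sum vanishes exactly when $H$ contains no generator of class $i+1$. Let $d=\min_{h\in H}\cls(\lt(h))$. By the cited \cite[Prop.~5.2.7]{S} (equivalently, the depth statement recalled earlier for quasi-stable monomial ideals, transferred to $\mathcal I$ via part (i) or directly), we have $\mathrm{depth}(\mathcal R/\mathcal I)=d-1$.

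If $i<d-1$, then $i+1<d$, so no generator has class $i+1$ and the sum is zero. For the converse, suppose $i\ge d-1$, i.e.\ $i+1\ge d$; we must produce a generator of class exactly $i+1$.

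\begin{itemize}
\item The case $i+1=d$ is immediate from the choice of $d$.
\item For $d<i+1\le n$, use quasi-stability. Take $h\in H$ of class $d$ with $\lt(h)=x^\alpha$. By the quasi-stability definition applied with the index $i+1>d$, there is $s\ge0$ such that $x_{i+1}^s x^\alpha/x_d\in\lt(\mathcal I)$.
\item Choose $m\ge0$ minimal with $t:=x_{i+1}^{m}\cdots$; the cleaner route is to consider the term $x_{i+1}^{N}$ times the part of $x^\alpha$ in variables $x_{i+2},\dots,x_n$. This term lies in $\lt(\mathcal I)$ for large $N$: iterate quasi-stability to remove all variables of index $\le i$, using that $\lt(\mathcal I)$ is saturated-like along the variable $x_{i+1}$ with respect to lower variables.
\item This term has class $i+1$. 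Its unique Pommaret involutive divisor in $\lt(H)$ has class at least $i+1$. Because the term is not divisible by $x_1,\dots,x_i$, the divisor's class is at most $i+1$ as well, so equals $i+1$, which is the required generator.
\end{itemize}

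The main obstacle is this last step: making the iteration of quasi-stability precise, so as to obtain an element of $\lt(\mathcal I)$ whose support avoids $x_1,\dots,x_i$ and contains $x_{i+1}$. I would first try an induction that strips the lowest variable one at a time using the definition. If that becomes messy, I would instead invoke the standard fact that $\mathcal R/\langle\lt(\mathcal I),x_1,\dots,x_i\rangle$ has quasi-regular sequence $(x_{i+1},\dots)$ and that its Pommaret basis consists of the generators of class $>i$. In either case, the statement ``a generator of class $k$ exists for every $d\le k\le n$ (unless the ideal is zero)'' is the key lemma.
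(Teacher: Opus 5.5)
Your part (i), and your reduction of part (ii) to the count $\#\{h\in H\mid \cls(h)=i+1\}$ together with $\mathrm{depth}(\mathcal R/\mathcal I)=d-1$, are exactly the paper's argument. The paper stops there. You are right that for $i>\mathrm{depth}(\mathcal R/\mathcal I)$ one also needs a generator of every class between $d$ and $n$.

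However, your proof of that key lemma fails at its final step, not at the one you flagged. The iteration of quasi-stability is unproblematic. Applying the definition repeatedly with the fixed index $i+1$ removes one factor $x_k$ with $k\le i$ at each step and only adds powers of $x_{i+1}$. So you do obtain $t=x_{i+1}^N u'\in\lt(\mathcal I)$ with $N\ge 1$ and $u'=x_{i+2}^{\alpha_{i+2}}\cdots x_n^{\alpha_n}$.

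The gap is the inference ``$t$ is not divisible by $x_1,\dots,x_i$, hence its involutive divisor has class at most $i+1$''. Non-divisibility by $x_1,\dots,x_i$ only gives the lower bound $\cls\ge i+1$, which you already had. Nothing in that reasoning rules out an involutive divisor of class $\ge i+2$, for instance a pure power $x_n^{a_n}$ or any generator in $x_{i+2},\dots,x_n$ dividing $u'$. In that case no generator of class $i+1$ has been produced. Your fallback has the same hole. Knowing that the Pommaret basis of $\mathcal R/\langle\lt(\mathcal I),x_1,\dots,x_i\rangle$ consists of the generators of class $>i$ says nothing about whether one of them has class exactly $i+1$.

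The missing idea is to use the disjointness of the Pommaret cones against the generator $h$ you started from. Suppose the involutive divisor $\lt(g)$ of $t$ had $\cls(\lt(g))\ge i+2$. Then $x_{i+1}\nmid\lt(g)$, so $\lt(g)\mid u'$. Moreover $u'/\lt(g)$ divides the multiplier $t/\lt(g)\in\K[x_1,\dots,x_{\cls(\lt(g))}]$. Hence
\[
x^\alpha=\lt(g)\cdot (u'/\lt(g))\cdot x_d^{\alpha_d}\cdots x_{i+1}^{\alpha_{i+1}},
\]
and every factor after $\lt(g)$ is multiplicative for $\lt(g)$. So $x^\alpha=\lt(h)$ lies in the cones of both $\lt(h)$ and $\lt(g)$. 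These are distinct generators, since their classes differ ($d\le i$ versus $\ge i+2$). This contradicts the direct sum decomposition, so $\cls(\lt(g))=i+1$, which proves the lemma and hence (ii).
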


These observations entail now the existence of \emph{generic annihilator
  numbers} which are also the annihilator numbers of the generic initial
ideal (thus they provide an alternative proof of \cite[Thm.~4.3.6]{herzog}).

\begin{theorem}\label{genann}
  Any graded module $\mathcal{M}=\mathcal{R}/\mathcal{I}$ possesses only
  finitely many different finite tables of annihilator numbers and almost
  all choices of the sequence $\boldsymbol{\ell}=(\ell_1,\ldots,\ell_n)$
  lead to the same table defined by the generic initial ideal:
  \begin{displaymath}
    \alpha_{ij}(\ell_1,\ldots,\ell_n;\mathcal{R/I}) =
    \alpha_{ij}\bigl(x_1,\ldots,x_n;\mathcal{R}/\mathrm{Gin}(\mathcal{I})\bigr).
  \end{displaymath}
\end{theorem}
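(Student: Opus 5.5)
Identify a choice of the sequence $\boldsymbol{\ell}$ with the invertible matrix $A$ whose columns express $\ell_1,\ldots,\ell_n$ in the old variables, so that $\alpha_{ij}(\boldsymbol{\ell};\mathcal{R/I})=\alpha_{ij}(x_1,\ldots,x_n;\mathcal{R}/A\cdot\mathcal{I})$. The problem then becomes a statement about $A\cdot\mathcal I$ for generic $A$.

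\textbf{Finiteness of the tables.} By Proposition~\ref{fintab}, a table is finite exactly when $\boldsymbol{\ell}$ is quasi-regular. By Proposition~\ref{prop4.16}, this happens exactly when $A\cdot\mathcal I$ is in quasi-stable position. In that case Theorem~\ref{alpha-pom.} shows the table is determined by the degrees and classes in the Pommaret basis of $\lt(A\cdot\mathcal I)$, and by Corollary~\ref{alpgaIlt}(i) it depends only on the monomial ideal $\lt(A\cdot\mathcal I)$.

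So it suffices to show that only finitely many monomial ideals arise as $\lt(A\cdot\mathcal I)$. This is the main obstacle. I would argue via the Gröbner fan: there are only finitely many initial ideals of $\mathcal I$ under coordinate changes. Alternatively, use a degree bound. In quasi-stable position the Pommaret basis has degree $\mathrm{reg}(\mathcal I)$ by Theorem~\ref{regpom}, and the regularity is coordinate independent. Hence $\lt(A\cdot\mathcal I)$ is generated in degrees at most $\mathrm{reg}(\mathcal I)$. There are only finitely many monomial ideals generated in bounded degree in $n$ variables, so only finitely many finite tables occur.

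\textbf{The generic table.} By Theorem~\ref{thm:gin-existence}, there is a nonempty Zariski open set $\mathcal U$ with $\lt(A\cdot\mathcal I)=\mathrm{Gin}(\mathcal I)$ for all $A\in\mathcal U$. Quasi-stable position is also generic, since \cite{HSS} shows it can always be achieved. I would make this more concrete by noting that $\mathrm{Gin}(\mathcal I)$ is Borel-fixed and hence quasi-stable, so every $A\in\mathcal U$ already puts $\mathcal I$ into quasi-stable position. For such $A$, Corollary~\ref{alpgaIlt}(i) gives
\[
\alpha_{ij}(\boldsymbol{\ell};\mathcal{R/I})=\alpha_{ij}\bigl(x_1,\ldots,x_n;\mathcal R/\lt(A\cdot\mathcal I)\bigr)=\alpha_{ij}\bigl(x_1,\ldots,x_n;\mathcal R/\mathrm{Gin}(\mathcal I)\bigr).
\]
Since $\mathcal U$ is a nonempty Zariski open subset and $\K$ is infinite, this holds for almost all $\boldsymbol{\ell}$, which completes the argument.

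The one delicate point is the claim that $\mathrm{Gin}(\mathcal I)$ is quasi-stable in arbitrary characteristic. In positive characteristic I would use that Borel-fixed ideals are $p$-stable, and that this still implies the quasi-stability condition (some power $x_i^s$ works). If that route fails, I would instead intersect $\mathcal U$ with the open set of quasi-stable positions from \cite{HSS}; the intersection is still nonempty open, and the same conclusion follows.
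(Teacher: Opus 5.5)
Your proposal is correct and has the same skeleton as the paper's proof. Proposition~\ref{fintab} and Proposition~\ref{prop4.16} reduce finite tables to quasi-stable position. Theorem~\ref{alpha-pom.} together with Corollary~\ref{alpgaIlt}(i) makes the table a function of the leading ideal alone, and genericity comes from Theorem~\ref{thm:gin-existence}.

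The real difference is the finiteness step. The paper appeals to the theory of Gr\"obner systems to show that $\I$ has only finitely many leading ideals under \emph{all} linear coordinate changes. You instead combine Theorem~\ref{regpom} with the coordinate independence of $\mathrm{reg}(\I)$: in every quasi-stable position, $\lt(A\cdot\I)$ is generated in degrees at most $\mathrm{reg}(\I)$, which leaves only finitely many candidates. Since only quasi-stable positions produce finite tables, this weaker finiteness is exactly what is needed. Your argument is more elementary and self-contained. It can even be shortened: every finite table vanishes for $j\geq\mathrm{reg}(\I)$, and its entries are bounded by $\dim_\K\mathcal{R}_j$. The paper's route gives a stronger statement about all leading ideals, at the price of an external theory. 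You also make explicit something the paper calls obvious, namely that the generic coordinates are quasi-stable.

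Three smaller remarks.

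First, drop the Gr\"obner fan option. The Gr\"obner fan describes the initial ideals of a \emph{fixed} ideal under varying term orders, not a fixed order under coordinate changes. Indeed, $\mathrm{Gin}(\I)$ is in general not an initial ideal of $\I$ in the original coordinates, so the fan gives no finiteness here.

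Second, your worry about positive characteristic is unfounded. Let $x^\alpha\in\mathrm{Gin}(\I)$ have class $k$, and choose $e$ so that the $e$-th $p$-adic digit of $\alpha_k$ is non-zero. By Lucas' theorem $\binom{\alpha_k}{p^e}\not\equiv 0 \pmod p$. Borel-fixedness under $x_k\mapsto x_k+c\,x_i$ with $i>k$ then gives $x_i^{p^e}x^\alpha/x_k^{p^e}\in\mathrm{Gin}(\I)$. Multiplying by $x_k^{p^e-1}$ yields the quasi-stability condition. So the fallback is unnecessary; it would in any case need openness, not mere existence, of quasi-stable positions.

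Third, there is a minor convention slip. Under the paper's action, the matrix whose columns are the coefficient vectors of the $\ell_j$ substitutes $\ell_j$ for $x_j$. What you need is the substitution sending $\ell_j$ to $x_j$, i.e.\ the inverse matrix. Since inversion maps non-empty Zariski-open subsets of $\mathrm{GL}_n(\K)$ to such subsets, the argument is unaffected.
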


\begin{proof}
  By Proposition~\ref{fintab}, finite annihilator tables occur, if and only
  if the sequence~$\boldsymbol{\ell}$ is quasi-regular. In this case, we
  can use the linear forms $\ell_{i}$ as new variables.  By
  Proposition~\ref{prop4.16}, the ideal $\mathcal{I}$ will possess a
  Pommaret basis in these variables and, by Theorem~\ref{alpha-pom.}, the
  annihilator numbers $\alpha_{ij}(\ell_1,\ldots,\ell_n;\mathcal{M})$
  encode the degrees and classes of the generators in this basis.  It is a
  simple consequence of the theory of Gr\"obner systems \cite{vw:compgb} that
  any polynomial ideal possesses only finitely many different leading
  ideals under linear transformations of the variables.  Hence, by
  Corollary~\ref{alpgaIlt}, the module $\mathcal{R}/\mathcal{I}$ has only
  finitely many different finite annihilator tables.  Obviously, the
  transformation defined by any generic choice of the
  sequence~$\boldsymbol{\ell}$ will lead to the generic initial ideal
  $\mathrm{Gin}(\mathcal{I})$ and thus always to the same table.
\end{proof}

Finally, we present in this section a novel proof of the following
well-known result on the behaviour of the regularity under generic
hyperplane sections (see e.\,g.\ {\cite[Prop.~20.20]{zbMATH00704831}}).  It
relates directly the involved invariants to the Pommaret basis, an
observation of independent interest.

\begin{proposition}\label{regl}
  Let $\ell\in\mathcal{R}$ be a quasi-regular linear form for the finitely
  generated, graded $\mathcal{R}$-module $\mathcal{M}$.  Then
  \begin{displaymath}
    \mathrm{reg}(\mathcal{M}) = 
    \max{\bigl\{\mathrm{reg}(\mathcal{M}/\ell\mathcal{M}),
      \mathrm{reg}(0:_{\mathcal{M}}\ell)\bigr\}}.
  \end{displaymath}
\end{proposition}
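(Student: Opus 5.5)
Since the proposition is to relate the invariants directly to the Pommaret basis, I will work with $\mathcal{M}=\mathcal{R}/\mathcal{I}$ for a homogeneous ideal $\mathcal{I}$. For a general finitely generated graded module one argues in the same way with a Pommaret basis of a submodule of a free module. The first step is to choose coordinates adapted to $\ell$. Because $\ell$ is quasi-regular for $\mathcal{M}$, we can take $x_1=\ell$ and complete it by generic linear forms $x_2,\dots,x_n$ to a sequence that is quasi-regular for $\mathcal{M}$; this is the usual prime-avoidance argument for the existence of quasi-regular sequences over an infinite field. By Proposition~\ref{prop4.16}, $\mathcal{I}$ then has a Pommaret basis $H$ in these coordinates. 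Theorem~\ref{regpom} gives $\mathrm{reg}(\mathcal{M})=\deg(H)-1$.

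Next I treat the two modules on the right-hand side. For $\mathcal{M}/\ell\mathcal{M}=\mathcal{R}/\langle\mathcal{I},x_1\rangle$, I claim that $H_{\ge 2}=\{h\in H\mid \cls(h)\ge 2\}$, with every $h$ reduced modulo $x_1$, together with $x_1$ forms a Pommaret basis of $\langle \mathcal{I},x_1\rangle$. This follows from the direct sum decomposition: the elements of class $1$ lie in $\langle x_1\rangle$ after reduction by drl, because their leading term is divisible by $x_1$ and drl forces every term to be divisible by $x_1$. The remaining generators stay involutive modulo $x_1$. Consequently
\[
\mathrm{reg}(\mathcal{M}/\ell\mathcal{M})=\max\bigl\{\deg(h)-1\mid h\in H,\ \cls(h)\ge 2\bigr\},
\]
where the generator $x_1$ of degree $1$ contributes nothing beyond $0$. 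For $0:_{\mathcal{M}}\ell$ I use the finite-length property. Its regularity is the largest degree $j$ in which it is non-zero, that is, the largest $j$ with $\alpha_{0j}(x_1,\dots,x_n;\mathcal{M})\neq0$. By Theorem~\ref{alpha-pom.} this equals $\max\{\deg(h)-1\mid h\in H,\ \cls(h)=1\}$.

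Taking the maximum of the two expressions runs over all of $H$, so it equals $\deg(H)-1=\mathrm{reg}(\mathcal{M})$, which proves the proposition. If $0:_{\mathcal{M}}\ell=0$, that term is read as $-\infty$, and this matches the case where $H$ has no elements of class $1$.

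The main obstacle is the claim that reducing modulo $x_1$ preserves the Pommaret basis property, with exactly the class-$1$ elements disappearing. For the leading terms this follows because the drl leading ideal satisfies $\lt(\langle\mathcal{I},x_1\rangle)=\langle\lt(\mathcal{I}),x_1\rangle$. I will argue this identity via the standard drl property $\lt(f)\in\langle x_1\rangle\Rightarrow f\in\langle x_1\rangle$. After that, the involutive decomposition of $\langle\lt(\mathcal{I})\rangle$ restricts to the terms not divisible by $x_1$, which gives the claim.
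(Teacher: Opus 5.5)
Your overall strategy matches the paper's. You put $\ell=x_1$ first in a quasi-regular sequence, split the Pommaret basis as $H=H_1\cup H_{>1}$, and show that the two terms on the right-hand side equal $\deg(H_{>1})-1$ and $\deg(H_1)-1$, so their maximum is $\deg(H)-1=\mathrm{reg}(\mathcal{M})$. You also justify extending $\ell$ to a quasi-regular sequence by prime avoidance, which the paper only asserts.

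One claim in your treatment of $\mathcal{M}/\ell\mathcal{M}$ is false as stated. The set $\{x_1\}\cup\{h|_{x_1=0}\mid h\in H,\ \cls(h)\geq 2\}$ is in general \emph{not} a Pommaret basis of $\langle\mathcal{I},x_1\rangle\subset\mathcal{R}$. The only multiplicative variable of $x_1$ is $x_1$ itself, so e.g.\ $x_1x_2$ has no involutive divisor in this set unless $x_2\in\lt(H)$. In fact $\langle\lt(\mathcal{I}),x_1\rangle$ is quasi-stable only if $\lt(\mathcal{I})$ contains pure powers of all of $x_2,\dots,x_n$; this is the phenomenon noted in the example of Section~\ref{sec:3}, where $\mathcal{I}+\langle x_1\rangle$ is not quasi-stable. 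So Theorem~\ref{regpom} cannot be invoked in $\mathcal{R}$.

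The repair is what your last paragraph actually proves and what the paper does. Write $\mathcal{M}/x_1\mathcal{M}\cong\K[x_2,\dots,x_n]/\mathcal{I}|_{x_1=0}$. Using $\lt(\mathcal{I}+\langle x_1\rangle)=\lt(\mathcal{I})+\langle x_1\rangle$ and the restriction of the involutive cones to terms not divisible by $x_1$, $H_{>1}|_{x_1=0}$ is a Pommaret basis of $\mathcal{I}|_{x_1=0}$ in the smaller ring. Then apply Theorem~\ref{regpom} there; regularity is unchanged under this passage.

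For $0:_{\mathcal{M}}\ell$ you take a genuinely different and much shorter route. The paper first proves that $\{h/x_1\mid h\in H_1\}\cup H_{>1}$ is a Pommaret basis of $\mathcal{I}\colon x_1$. It then uses relative Pommaret bases to produce $S$- and $A$-syzygies, builds an explicit (non-minimal) free resolution of $(\mathcal{I}\colon x_1)/\mathcal{I}$, and reads off its regularity.

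You instead use that $0:_{\mathcal{M}}x_1$ has finite length, so its regularity is its top non-zero degree. This is standard: the top component lies in the socle and contributes to $\mathrm{Tor}_n$ in degree $\mathrm{end}+n$, while the Koszul complex bounds all other Tor-degrees. You then get that degree from the case $i=0$ of Theorem~\ref{alpha-pom.}. This is correct and avoids the syzygy machinery entirely. What the paper's heavier argument buys is extra structure: a Pommaret basis of the colon ideal and an explicit involutive resolution of the annihilator module, obtained by purely Pommaret-basis arguments.
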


For notational simplicity, we will prove this proposition only for the case
$\mathcal{M}=\mathcal{R}/\mathcal{I}$ with a homogeneous ideal
$\mathcal{I}\subset\mathcal{R}$; the extension to general, finitely
generated modules $\mathcal{M}=\mathcal{R}^m/\mathcal{U}$ is
straightforward.  Since $\ell$ is assumed to be quasi-regular, we can
always perform a linear change of variables such that $\mathcal{I}$ is in
quasi-stable position and $\ell$ is the first variable.  Hence we may
assume without loss of generality that $\mathcal{I}$ is in quasi-stable
position and $\ell=x_1$.  Let $H$ be the Pommaret basis of $\mathcal{I}$.
We disjointly decompose $H=H_1\cup H_{>1}$ where $H_1$ contains all generators
$h\in H$ of class~$1$ and $H_{>1}$ the remaining generators of higher class.
Proposition~\ref{regl} follows now immediately from Theorem~\ref{regpom}
and the following result.

\begin{proposition}\label{reg12}
    With the above notations, we have the following equalities:
    \begin{enumerate}[(i)]
        \item $\mathrm{reg}(\mathcal{M}/x_1\mathcal{M})=\deg{(H_{>1})}-1$.
        \item \label{reg12-2} $\mathrm{reg}(0:_{\mathcal{M}}x_1)=\deg{(H_1)}-1$.
    \end{enumerate}
\end{proposition}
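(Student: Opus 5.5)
For (i), I will show that $\mathcal{M}/x_1\mathcal{M}=\mathcal{R}/\langle\mathcal{I},x_1\rangle$ has a Pommaret basis obtained from $H_{>1}$. I identify $\mathcal{R}/\langle\mathcal{I},x_1\rangle$ with $\bar{\mathcal{R}}/\bar{\mathcal{I}}$, where $\bar{\mathcal{R}}=\K[x_2,\dots,x_n]$ and $\bar{\mathcal{I}}$ is the image of $\mathcal{I}$ under $x_1\mapsto0$. Then I set $\bar H=\{h|_{x_1=0}\mid h\in H_{>1}\}$. Since $\prec_{drl}$ is used, every $h\in H_{>1}$ has leading term of class $>1$, so $\LT(h)$ is not divisible by $x_1$. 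The reverse lexicographic property then gives $\LT(h|_{x_1=0})=\LT(h)$, and the degree is unchanged.

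Next I check that $\bar H$ is a Pommaret basis of $\bar{\mathcal{I}}$. The direct sum decomposition $\mathcal{I}=\bigoplus_{h\in H}\K[{\rm M}_P(\LT h)]\cdot h$ shows that $\LT(\mathcal{I})$ is the disjoint union of the involutive cones. The cones of class-$1$ generators consist of terms divisible by $x_1$. For a generator $h$ of class $k>1$, the terms of its cone that avoid $x_1$ form exactly the Pommaret cone of $\LT(h)$ in $\bar{\mathcal{R}}$. The standard rev-lex fact $\LT(\langle\mathcal{I},x_1\rangle)=\langle\LT(\mathcal{I}),x_1\rangle$ (see \cite{S,zbMATH04014856}) then yields $\LT(\bar{\mathcal{I}})=\bigoplus_{h\in H_{>1}}\K[x_2,\dots,x_{\cls h}]\cdot\LT(h)$, which is exactly the Pommaret basis property for $\bar H$. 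Theorem~\ref{regpom} applied over $\bar{\mathcal{R}}$ gives $\mathrm{reg}(\bar{\mathcal{I}})=\deg(H_{>1})$. The convention $\mathrm{reg}(\mathcal{R}/\mathcal{J})=\mathrm{reg}(\mathcal{J})-1$, together with the fact that regularity is unchanged by passing between $\bar{\mathcal{R}}$ and $\mathcal{R}$, gives (i). The degenerate case $H_{>1}=\emptyset$ only occurs for $\mathcal{I}=0$ in a trivial situation, and I will treat it separately.

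For (ii), $0:_{\mathcal{M}}x_1$ has finite length by quasi-regularity, so its regularity is its top nonzero degree. By Theorem~\ref{alpha-pom.} with $i=0$, $\dim(0:_\mathcal{M}x_1)_j=\#\{h\in H_1\mid\deg h=j+1\}$, so the largest $j$ with a nonzero component is $\deg(H_1)-1$. This uses that the regularity of a finite length module equals its maximal nonvanishing degree, which is standard via local cohomology since $H^0_\mathfrak{m}$ of it is the module itself.

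The main obstacle is in (i): verifying rigorously that restricting the involutive decomposition to $x_1=0$ gives a Pommaret basis of $\bar{\mathcal{I}}$, that is, the leading-ideal identity. Even without that identity, $\bar H\subset\bar{\mathcal{I}}$ and a dimension count of Hilbert functions suffices. Exactness of $0\to(0:x_1)(-1)\to\mathcal{M}(-1)\to\mathcal{M}\to\mathcal{M}/x_1\mathcal{M}\to0$ gives the Hilbert function of $\mathcal{M}/x_1\mathcal{M}$. Comparing it with the count of standard terms predicted by the Pommaret decomposition, class-$1$ cones are removed while the other cones lose their $x_1$-direction, shows that the span of the $\bar H$-cones already has the full dimension.
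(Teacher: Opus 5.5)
Your proposal is correct. Part (i) takes the paper's route: pass to $\K[x_2,\dots,x_n]/\mathcal{I}|_{x_1=0}$, show that $H_{>1}|_{x_1=0}$ is a Pommaret basis there, and apply Theorem~\ref{regpom}. The paper cites \cite[Lemma~3.1]{zbMATH06126804} for the Pommaret basis property. You instead derive it from $\LT(\langle\mathcal{I},x_1\rangle)=\langle\LT(\mathcal{I}),x_1\rangle$ and the disjointness of the involutive cones, and that argument is sound. Your Hilbert-function fallback also works, using the case $i=0$ of Theorem~\ref{alpha-pom.} for the term $\dim(0:_{\mathcal{M}}x_1)_{q-1}$.

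Part (ii) takes a genuinely different and much shorter route. You note that $0:_{\mathcal{M}}x_1$ has finite length, so its regularity is its top non-vanishing degree. This can be seen directly: $\mathrm{Tor}_i(N,\K)_j$ is a subquotient of $\bigwedge^i\K^n\otimes N_{j-i}$, and the top-degree part of $N$ lies in the socle, with $\mathrm{Tor}_n(N,\K)\cong\mathrm{soc}(N)(-n)$. You then read off the Hilbert function of $0:_{\mathcal{M}}x_1$ from Theorem~\ref{alpha-pom.} with $i=0$.

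The paper proceeds differently. It shows that $H_1'\cup H_{>1}$ is a Pommaret basis of $\mathcal{I}\colon x_1$. It then uses relative Pommaret bases \cite{hos:relgb} to build, from $S$- and $A$-syzygies, an explicit and generally non-minimal free resolution of $(\mathcal{I}\colon x_1)/\mathcal{I}$ whose syzygy basis has degree $\deg(H_1)$, and concludes with Theorem~\ref{regpom}.

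Your argument costs only a standard fact about finite-length modules. The paper's stays inside the Pommaret framework and yields an explicit resolution, as in the example that follows, but needs considerably heavier machinery.

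One small correction: $H_{>1}=\emptyset$ occurs not only for $\mathcal{I}=0$ but also for $n=1$ and any non-zero proper $\mathcal{I}$. There $\mathcal{M}/x_1\mathcal{M}\cong\K$ has regularity $0$, so (i) needs a convention in that case. The paper silently ignores this degenerate case as well.
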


\begin{proof}
  (i) follows trivially from the observations that
  $\mathcal{M}/x_1\mathcal{M} \cong
  \mathcal{R}/\langle\mathcal{I},x_1\rangle \cong
  \mathcal{K}[x_2,\dots,x_n]/\mathcal{I}|_{x_1=0}$ and that $H_{>1}|_{x_1=0}$
  is the Pommaret basis of $\mathcal{I}|_{x_1=0}$
  \cite[Lemma~3.1]{zbMATH06126804} together with Theorem~\ref{regpom}.

  For proving (ii), we need the theory of relative involutive bases
  developed in \cite{hos:relgb} and the obvious isomorphism
  $0:_{\mathcal{M}}x_1\cong (\mathcal{I}\colon x_1)/\mathcal{I}$.  Since we
  work with the degree reverse lexicographic ordering, every polynomial of
  class~$1$ is contained in $\langle x_1\rangle$ and hence we can introduce
  the set $H'_1=\{h/x_1\mid h\in H_1\}$.  We first claim that
  $H'=H'_1\cup H_{>1}$ is the Pommaret basis of $\mathcal{I}\colon x_1$.
  Given a polynomial $f\in\mathcal{I}\colon x_1$, the polynomial $x_1f$
  lies in $\mathcal{I}$ and hence possesses an involutive standard
  representation $x_1f=\sum_{h\in H}q_hh$ with
  $q_h\in\mathcal{K}\bigl[{\rm M}_P(\LT(h))\bigr]$.  Because of the factor
  $x_1$, we must have $q_h\in\langle x_1\rangle$ for all $h$ of class
  greater than~$1$.  This leads to an involutive standard representation of
  $f$ over $H'$, $f=\sum_{h\in H_1}q_h(h/x_1)+\sum_{h\in H_{>1}}(q_h/x_1)h$,
  entailing our claim.

  A simple consequence of the just proven claim is that $H'_1$ is a
  Pommaret basis of $\mathcal{I}\colon x_1$ relative to $\mathcal{I}$ (and
  thus can be considered as a Pommaret basis of the factor module
  $(\mathcal{I}\colon x_1)/\mathcal{I}$). \cite[Thm.~5.13]{hos:relgb}
  provides now a relative Pommaret basis of the first syzygy module of
  $(\mathcal{I}\colon x_1)/\mathcal{I}$ considered as an
  $\mathcal{R}/\mathcal{I}$-module.  It consists of two types of elements:
  $S$- and $A$-syzygies.  For the $S$-syzygies, we start with the
  involutive standard representations $x_kh_1=\sum_{h\in H'}q_hh$ with
  $q_h\in\mathcal{K}\bigl[{\rm M}_P(\LT(h))\bigr]$ where $h_1\in H'_1$ and
  $k>\cls{(h_1)}$ (which exist since we  showed above that $H'$ is a
  Pommaret basis of $\mathcal{I}\colon x_1$). They show that
  $S_{h_1,k}=x_kh_1-\sum_{h\in H'_1}q_h(0)h\in\mathcal{I}$. The
  $A$-syzygies are the trivial relations $A_{h_1}=x_1h_1\in\mathcal{I}$ for
  all $h_1\in H'_1$.  The mentioned relative Pommaret basis of
  $\mathrm{Syz}_{\mathcal{R}/\mathcal{I}}\bigl((\mathcal{I}\colon
  x_1)/\mathcal{I}\bigr)$ is then obtained by taking the residue classes of
  all coefficients of these syzygies modulo $\mathcal{I}$.  We are treating
  $(\mathcal{I}\colon x_1)/\mathcal{I}$ as an $\mathcal{R}$-module and it
  is easy to see that the set of all the linear combinations $S_{h_1,k}$,
  $A_{h_1}$ induces an ordinary (i.\,e.\ not relative) Pommaret basis
  $H_{\mathrm{Syz}}$ of
  $\mathrm{Syz}_{\mathcal{R}}\bigl((\mathcal{I}\colon
  x_1)/\mathcal{I}\bigr)$.

  The Pommaret basis $H_{\mathrm{Syz}}$ of
  $\mathrm{Syz}_{\mathcal{R}}\bigl((\mathcal{I}\colon
  x_1)/\mathcal{I}\bigr)$ in turn induces a whole free resolution of the
  syzygy module so that in total we have constructed a (generally not
  minimal) free resolution of $(\mathcal{I}\colon x_1)/\mathcal{I}$.  We
  note that by construction $\deg{(H_{\mathrm{Syz}})}=\deg{(H_1)}$. Hence,
  by Theorem~\ref{regpom}, the regularity of
  $\mathrm{Syz}_{\mathcal{R}}\bigl((\mathcal{I}\colon
  x_1)/\mathcal{I}\bigr)$ equals $\deg{(H_1)}$ and consequently
  $\mathrm{reg}(0:_{\mathcal{M}}x_1)=\deg{(H_1)}-1$ as claimed.
\end{proof}

\begin{example}
  As a simple example for the construction used in the above proof, we
  consider the monomial ideal
  $\mathcal{I} = \langle x_1^2x_2, x_2^3, x_2x_3, x_3^2\rangle \subset
  \mathbb{Q}[x_1,x_2,x_3]$ and the module
  $\mathcal{M}=\mathcal{R}/\mathcal{I}$.  For the Pommaret basis of
  $\mathcal{I}$, we only have to add the element $x_1^2x_2^2$, so that here
  $H_1=\{x_1^2x_2, x_1^2x_2^2\}$ and $H_{>1}=\{ x_2^3, x_2x_3, x_3^2\}$.
  Now
  $\mathcal{M}/x_1\mathcal{M} \cong \mathcal{K}[x_2,x_3] / \langle
  H_{>1}\rangle$ and thus $\mathrm{reg}(\mathcal{M}/x_1\mathcal{M})=2$ by
  Theorem~\ref{regpom}.

  The module
  $0:_{\mathcal{M}}x_1 \cong (\mathcal{I}\colon
  x_1)/\mathcal{I}$ is here a cyclic module generated by the residue class
  $x_1x_2+\mathcal{I}$; for a $\mathcal{K}$-linear basis one has to add the
  residue class $x_1x_2^2+\mathcal{I}$ induced by the additional generator
  of the Pommaret basis.  Its first syzygy module is generated by four
  $S$-syzygies corresponding to the non-multiplicative prolongations
  $x_2\cdot x_1^2x_2$, $x_2\cdot x_1^2x_2^2$ and $x_3\cdot x_1^2x_2$,
  $x_3\cdot x_1^2x_2^2$ and by two $A$-syzygies corresponding to
  $x_1\cdot x_1^2x_2$, $x_1\cdot x_1^2x_2^2$.  In total, this yields the
  free resolution
  \begin{displaymath}
    0 \longrightarrow \mathcal{R}^2 \stackrel{d_3}{\longrightarrow} 
    \mathcal{R}^6 \stackrel{d_2}{\longrightarrow}
    \mathcal{R}^6 \stackrel{d_1}{\longrightarrow}
    \mathcal{R}^2 \longrightarrow (\mathcal{I}\colon x_1)/\mathcal{I} 
    \longrightarrow 0
  \end{displaymath}
  with the differentials
  \begin{gather*}
    d_1 =
    \begin{pmatrix}
      x_1 & 0 & x_2 & 0 & x_3 & 0 \\
      0 & x_1 & -1 & x_2 & 0 & x_3
    \end{pmatrix},\\[0.5\baselineskip]
    d_2 =
    \begin{pmatrix}
      x_2 & 0 & x_3 & 0 & 0 & 0 \\
      -1 & x_2 & 0 & x_3 & 0 & 0 \\
      -x_1 & 0 & 0 & 0 & x_3 & 0 \\
      0 & -x_1 & 0 & 0 & 0 & x_3 \\
      0 & 0 & -x_1 & 0 & -x_2 & 0 \\
      0 & 0 & 0 & -x_1 & 1 & -x_2
    \end{pmatrix},\qquad
    d_3 = 
    \begin{pmatrix}
      x_3 & 0 \\
      0 & x_3 \\
      -x_2 & 0 \\
      1 & -x_2 \\
      x_1 & 0 \\
      0 & x_1
    \end{pmatrix}.
  \end{gather*}
  The minimal resolution
  \begin{displaymath}
    0 \longrightarrow \mathcal{R} \longrightarrow \mathcal{R}^3
    \longrightarrow \mathcal{R}^3 \longrightarrow \mathcal{R} 
    \longrightarrow (\mathcal{I}\colon x_1)/\mathcal{I} \longrightarrow 0
  \end{displaymath}
  can either be obtained by minimisation or by observing that it is given by
  the Koszul complex over the regular sequence $x_1,x_2^2,x_3$ (one can
  easily check that
  $(\mathcal{I}\colon x_1)/\mathcal{I} \cong \mathcal{R}/\langle
  x_1,x_2^2,x_3\rangle$, as
  $\mathcal{I}\colon x_1x_2=\langle x_1,x_2^2,x_3\rangle$). It follows that
  $\mathrm{reg}(0:_{\mathcal{M}}x_1)=3$ and hence also
  $\mathrm{reg}(\mathcal{M})=3$, as predicted by the Pommaret basis.
\end{example}

As a consequence of the above considerations, one obtains immediately the
following equality which seems to be new. We denote by $\operatorname{sat}(\mathcal I)$ the satiety of
$\mathcal I$, i.e. the smallest degree from which $\mathcal I$ and its
saturation (with respect to the maximal homogeneous ideal) agree.

\begin{corollary}
  Let $\ell\in\mathcal{R}$ be a quasi-regular linear form for the module
  $\mathcal{M}=\mathcal{R}/\mathcal{I}$ with a homogeneous ideal
  $\mathcal{I}\subset\mathcal{R}$.  Then
    \begin{displaymath}
        \mathrm{reg}(0:_{\mathcal{M}}\ell) = \mathrm{sat}(\mathcal{I})-1.
    \end{displaymath}
\end{corollary}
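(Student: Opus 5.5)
The plan is to reduce everything to the Pommaret basis, exactly as in the proof of Proposition~\ref{regl}. Since $\ell$ is quasi-regular, we may change variables so that $\mathcal I$ is in quasi-stable position and $\ell=x_1$. Let $H=H_1\cup H_{>1}$ be the Pommaret basis, split by class as before. Proposition~\ref{reg12}(\ref{reg12-2}) already gives $\mathrm{reg}(0:_{\mathcal M}x_1)=\deg(H_1)-1$. Hence the whole task is to show
\[
\mathrm{sat}(\mathcal I)=\deg(H_1).
\]
When $H_1=\emptyset$ both sides are degenerate: $0:_{\mathcal M}x_1=0$ and $\mathcal I$ is saturated. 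This case is handled by the usual conventions, and I will only remark on it.

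First I show that $\mathcal I^{\mathrm{sat}}=\mathcal I\colon x_1^\infty$. The inclusion $\subseteq$ is trivial. For the converse, I will use the claim from the proof of Proposition~\ref{reg12}: $H_1'\cup H_{>1}$ is a Pommaret basis of $\mathcal I\colon x_1$. Iterating this claim, the set $\{h/x_1^{k_h}\mid h\in H_1\}\cup H_{>1}$, where $x_1^{k_h}$ is the largest power of $x_1$ dividing $h$, is a Pommaret basis of $\mathcal I\colon x_1^\infty$. Here I use that for $\prec_{drl}$ a class-$1$ polynomial is divisible by $x_1$ exactly as often as its leading term is. Every generator in this basis has class $\ge 2$. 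By the depth statement recalled before Corollary~\ref{alpgaIlt}, $x_1$ is then a non-zero-divisor on $\mathcal R/(\mathcal I\colon x_1^\infty)$. Moreover $\mathcal I\colon x_1^\infty$ agrees with $\mathcal I$ in large degrees, so it is contained in $\mathcal I^{\mathrm{sat}}$. Consequently $\mathcal I^{\mathrm{sat}}=\mathcal I\colon x_1^\infty$.

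Next I compare the two direct sum decompositions. The decomposition of $\mathcal I$ induced by $H$ and that of $\mathcal I\colon x_1^\infty$ induced by the basis above share the summands $\mathcal K[{\rm M}_P(\LT(h))]\cdot h$ for $h\in H_{>1}$. For $h\in H_1$ the only multiplicative variable is $x_1$. The summand $\mathcal K[x_1]\,h/x_1^{k_h}$ contains $\mathcal K[x_1]\,h$, and the extra part is spanned by $x_1^j h/x_1^{k_h}$ with $0\le j<k_h$.

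The delicate point is that the new generators $h/x_1^{k_h}$ have higher class, so their multiplicative variables are more than just $x_1$. The resulting summands are therefore larger than $\mathcal K[x_1]\,h/x_1^{k_h}$, and some care is needed. I would argue at the level of Hilbert functions using the leading terms. The quotient $\mathcal I^{\mathrm{sat}}/\mathcal I$ is nonzero exactly in those degrees where some element of $\mathcal I\colon x_1^\infty$ has leading term outside $\LT(\mathcal I)$. Every such element reaches $\mathcal I$ after multiplication by $x_1^{k}$ with $k\le k_h$ for some $h\in H_1$. Its degree is therefore at most $\deg h-1$, and the degree $\deg h-1$ is actually attained by $h/x_1$. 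From this, the last degree in which $\mathcal I$ and $\mathcal I^{\mathrm{sat}}$ differ is $\deg(H_1)-1$, so $\mathrm{sat}(\mathcal I)=\deg(H_1)$.

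This last step is the main obstacle: making rigorous that no element of $\mathcal I^{\mathrm{sat}}\setminus\mathcal I$ lives in degree $\ge\deg(H_1)$. My first attempt would be to use the involutive standard representation of $x_1^k f$ over $H$. As in the proof of Proposition~\ref{reg12}, the coefficients of the generators in $H_{>1}$ must be divisible by $x_1^k$, which reduces the question to the class-$1$ part, where the degree bound is immediate.
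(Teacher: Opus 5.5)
Your reduction is the paper's. By Proposition~\ref{reg12}~\ref{reg12-2}, everything comes down to the identity $\mathrm{sat}(\mathcal{I})=\deg(H_1)$, which the paper does not prove but quotes from \cite[Cor.~5.5.29]{S}. Citing it would complete your proof. Your attempt to prove the identity yourself has three holes.

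(a) The ``iterated claim'' is false as stated. For $\mathcal{I}=\langle x_2^2,x_1x_2\rangle\subset\mathcal{K}[x_1,x_2]$ your set is $\{x_2,x_2^2\}$. This is not a Pommaret basis of $\mathcal{I}\colon x_1^\infty=\langle x_2\rangle$, because $x_2^2$ lies in the cone $\mathcal{K}[x_1,x_2]\cdot x_2$. Once $h/x_1^{k_h}$ acquires higher class the cones overlap, as you notice later. The argument in the proof of Proposition~\ref{reg12} only yields involutive standard representations, not a direct sum. You do not need this step anyway. That $x_1$ is a non-zero-divisor on $\mathcal{R}/(\mathcal{I}\colon x_1^\infty)$ is trivial and plays no role. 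The inclusion $\mathcal{I}\colon x_1^\infty\subseteq\mathcal{I}^{\mathrm{sat}}$ rests only on agreement in high degrees, which you assert but should derive. Suppose $(\mathcal{I}\colon x_1)_q=\mathcal{I}_q$ for all $q\geq q_0$, and let $f$ be homogeneous of degree $q\geq q_0$ with $x_1^kf\in\mathcal{I}$. Then $x_1^{k-1}f\in(\mathcal{I}\colon x_1)_{q+k-1}=\mathcal{I}_{q+k-1}$, and induction on $k$ yields $f\in\mathcal{I}$.

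(b) The Hilbert-function paragraph is not an argument. Your fallback also does not work ``as in'' Proposition~\ref{reg12} for $k\geq 2$. After one division by $x_1$, the elements $h/x_1$ with $k_h=1$ no longer vanish at $x_1=0$. So the substitution $x_1=0$ no longer isolates the $H_{>1}$-part, and an extra leading-term argument would be needed.

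(c) You never justify $h/x_1\notin\mathcal{I}$.

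All of this is avoided by working with $k=1$ only. Take a homogeneous $f\in\mathcal{I}\colon x_1$. The representation from the proof of Proposition~\ref{reg12} gives $f=\sum_{h\in H_1}q_h\,h/x_1+\sum_{h\in H_{>1}}(q_h/x_1)\,h$ with $q_h\in\mathcal{K}[x_1]$ homogeneous of degree $\deg f+1-\deg h$. If $\deg f\geq\deg(H_1)$, every such $q_h$ with $h\in H_1$ is divisible by $x_1$, hence $f\in\mathcal{I}$.

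Conversely, take $h\in H_1$ of maximal degree. Then $h/x_1\in\mathcal{I}\colon x_1$, but $\LT(h)/x_1\notin\LT(\mathcal{I})$. Indeed, an involutive divisor $\LT(h')$ of $\LT(h)/x_1$ would also involutively divide $\LT(h)$, since $x_1$ is multiplicative for every generator. This contradicts the disjointness of the cones, as $h'\neq h$ for degree reasons.

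So $\deg(H_1)-1$ is the last degree where $\mathcal{I}\colon x_1$ and $\mathcal{I}$ differ. By the induction above and the inclusion $\mathcal{I}\colon x_1\subseteq\mathcal{I}\colon x_1^\infty=\mathcal{I}^{\mathrm{sat}}$, it is also the last degree where $\mathcal{I}^{\mathrm{sat}}$ and $\mathcal{I}$ differ. Hence $\mathrm{sat}(\mathcal{I})=\deg(H_1)$.

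The same two facts, applied directly to $\ell$, already prove the corollary without Proposition~\ref{reg12} or any Pommaret basis. The module $0:_{\mathcal{M}}\ell$ has finite length, so its regularity is its top non-zero degree. That degree equals the top non-zero degree of $\mathcal{I}^{\mathrm{sat}}/\mathcal{I}$, namely $\mathrm{sat}(\mathcal{I})-1$.
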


\begin{proof}
  In the notations used above, we have
  $\mathrm{sat}(\mathcal{I})=\deg{(H_1)}$ \cite[Cor.~5.5.29]{S}.  Now the
  claim follows immediately from Proposition~\ref{reg12} \ref{reg12-2}.
\end{proof}

%\begin{proposition}\label{4.34-ths}
   % Let $\mathcal{I}\subset\mathcal{R}$ be a monomial ideal. Then, $\mathcal{I}$ is stable if and only if its minimal basis coincides with its Pommaret basis.
%\end{proposition}
%\begin{proposition}\label{4.38-ths}
  %  Let $\mathcal{I}\subset \mathcal{R}$ be a homogeneous ideal generated in degrees at most $q$. Then, $\mathcal{I}$ is $q$-regular if and only if $\mathcal{I}_{\geq q}$ is stable.
  %\end{proposition}

%%% Local Variables:
%%% mode: latex
%%% TeX-master: "MainPaperFile"
%%% End:

%% file: StronglyStable.tex
\section{Strongly stable ideals}
\label{sec:3}

This section reviews auxiliary results on strongly stable ideals that are
relevant to the subject of this paper. Although most of them were already
proven in \citep{axial}, we provide new proofs for them using the theory of
Pommaret bases.

\begin{definition}
  The monomial ideal $\mathcal{I}\subset \mathcal{R}$ is \emph{strongly
    stable} if for every term $x^\alpha\in\mathcal{I}$, and every index
  pair $i<j$ such that $x_i\mid x^\alpha$, the term $x^\alpha x_j/x_i$ also
  lies in $\mathcal{I}$.  For an arbitrary polynomial ideal $\mathcal{I}$,
  we say that it is in \emph{strongly stable position}, if its leading
  ideal $\lt(\mathcal{I})$ is strongly stable.
\end{definition}
  
We begin by establishing some useful relations among the exponents of the
elements in the Pommaret basis of a strongly stable ideal.  Throughout this
section, unless explicitly stated otherwise, let
$\mathcal{I}\subset \mathcal{R}$ be a strongly stable monomial ideal of
dimension~$D$ and let $H$ denote its Pommaret basis. Set $c = D+1$ and let
$q$ denote the maximal degree of the elements of $H$.  Since every strongly
stable ideal is, in particular, quasi-stable, it follows that
\( \operatorname{reg}(\mathcal{I}) = q \).

\begin{lemma}\label{ss-1}
  Let $\mathcal{I}\subset\mathcal{R}$ be a quasi-stable ideal, and let $H$
  denote its Pommaret basis. Then, the smallest index $i$ such that there
  exists $a_{i},\ldots,a_n\in \mathbb{N}$ with
  $x_{i}^{a_{i}}, \ldots, x_n^{a_n}\in H$ is $c$. In addition, when
  $\mathcal{I}$ is strongly stable, the exponents satisfy
  $a_n \leq \cdots \leq a_{c}$.
\end{lemma}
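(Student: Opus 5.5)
The first part follows from the dimension criterion recalled in Section~\ref{sec:2}: by \citep[Prop.~5.2.3]{S}, $D=\dim(\mathcal{I})$ is the minimal integer $i$ such that for every $j>i$ a pure power of $x_j$ lies in $\lt(H)=H$ (for a monomial ideal). With $c=D+1$, this says two things. First, pure powers $x_c^{a_c},\dots,x_n^{a_n}$ lie in $H$. Second, the criterion fails for $D-1$, so no pure power of $x_D$ lies in $H$. Hence the smallest index $i$ with $x_i^{a_i},\dots,x_n^{a_n}\in H$ is exactly $c$.

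One detail needs checking: a pure power of $x_j$ in $\mathcal{I}$ should correspond to a pure power in $H$. Let $a_j$ be minimal with $x_j^{a_j}\in\mathcal{I}$. Then $x_j^{a_j}$ is a minimal generator of $\mathcal{I}$, and every minimal generator belongs to the Pommaret basis. So I will phrase everything in terms of these minimal exponents $a_j$.

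For the inequality in the strongly stable case, take $c\le j<k\le n$. We have $x_j^{a_j}\in\mathcal{I}$. Applying the strongly stable exchange $x_j\mapsto x_k$ (valid since $j<k$) $a_j$ times gives $x_k^{a_j}\in\mathcal{I}$. By minimality of $a_k$, this yields $a_k\le a_j$. Taking consecutive indices gives $a_n\le\cdots\le a_c$.

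The only delicate point is the uniqueness of $a_i$. A Pommaret basis may contain several pure powers of the same variable; for instance, a non-minimal element $x_j^{b}$ with $b>a_j$ could arise through completion. So the exponents in the statement should be read as the minimal ones, namely the unique pure power of $x_j$ that is a minimal generator. The minimality argument above then applies directly.
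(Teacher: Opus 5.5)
Your proof is correct and follows essentially the paper's route: the first part is the dimension criterion recalled in Section~\ref{sec:2}, which the paper simply calls well known, and for the second part both you and the paper use strong stability to pass from $x_j^{a_j}$ to $x_k^{a_j}\in\mathcal{I}$ and then compare with the pure power of $x_k$ in $H$. Your closing worry is unfounded, however. Since $x_j$ is multiplicative for $x_j^{a}$, any $x_j^{b}$ with $b>a$ lies in the involutive cone of $x_j^{a}$, so the direct sum decomposition forbids two pure powers of the same variable in $H$, and the unique one is the minimal one. This is exactly what the paper uses when it says the Pommaret divisor of $x_i^{a_j}$ must be $x_i^{a_i}$, and it is why no reinterpretation of the statement is needed.
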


\begin{proof}
  The first assertion is a well-known fact about quasi-stable ideals.
  Since $x_c^{a_c}\in H$ and $\mathcal{I}$ is strongly stable, we have
  $x_i^{a_c}\in \mathcal{I}$ for every $i>c$. Because $H$ is a Pommaret
  basis of $\mathcal{I}$, the term $x_i^{a_c}$ is divisible by some element
  of $H$; necessarily, this divisor is $x_i^{a_i}$, so
  $x_i^{a_i}\mid x_i^{a_c}$, and hence $a_i\le a_c$. Now fix indices
  $c\le j<i\le n$. Since $x_j^{a_j}\in H$ and $\mathcal{I}$ is strongly
  stable, we also have $x_i^{a_j}\in \mathcal{I}$. By the same argument,
  $x_i^{a_i}\mid x_i^{a_j}$, and therefore $a_i\le a_j$. This proves the
  second claim.
\end{proof}
  
In the following, the symbols $a_c,\ldots,a_n$ always refer to the
exponents introduced in Lemma~\ref{ss-1}.  It is straightforward to see
that no element of a quasi-stable ideal $\mathcal{I}$ can depend only on
the variables $x_1,\ldots,x_D$.
  
\begin{lemma}\label{ss-2}
  For each mixed term $x_k^{b_k}\cdots x_n^{b_n}\in H$ with $c\leq k< n$
  and for every $k<i\leq n$, the inequality $\sum_{j=i}^nb_j<a_i$ holds.
\end{lemma}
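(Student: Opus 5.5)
The plan is to argue by contradiction, using the fact that for a strongly stable ideal the Pommaret basis coincides with the minimal monomial generating set. Strongly stable ideals are in particular stable, and for stable ideals the minimal basis is the Pommaret basis \cite{mall:pom}. So every element of $H$ is a minimal generator of $\mathcal{I}$: no element of $H$ is properly divisible by a term of $\mathcal{I}$.

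Fix a mixed term $h=x_k^{b_k}\cdots x_n^{b_n}\in H$ with $b_k>0$ and $c\le k<n$, and an index $k<i\le n$. Suppose, contrary to the claim, that $\sum_{j=i}^n b_j\ge a_i$. Consider the tail $u=x_i^{b_i}\cdots x_n^{b_n}$, which involves only the variables $x_i,\dots,x_n$ and has degree at least $a_i$. Choose any divisor $v$ of $u$ of degree exactly $a_i$; it exists because $\deg u\ge a_i$.

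The key step is to show $v\in\mathcal{I}$. By Lemma~\ref{ss-1} we have $x_i^{a_i}\in H\subset\mathcal{I}$. Strong stability allows replacing a factor $x_i$ by any $x_j$ with $j>i$ while staying in $\mathcal{I}$. Starting from $x_i^{a_i}$ and repeatedly moving single exponents from $x_i$ to higher variables, we reach every term of degree $a_i$ in the variables $x_i,\dots,x_n$. Hence all such terms lie in $\mathcal{I}$, in particular $v$. (The same fact could also be phrased as: $\langle x_i,\dots,x_n\rangle^{a_i}\subseteq\mathcal{I}$.)

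Now $v$ divides $u$, which divides $h$. The divisibility $v\mid h$ is proper, since $x_k\mid h$ (as $b_k>0$) but $x_k\nmid v$ (as $k<i$). Thus $h$ is properly divisible by an element of $\mathcal{I}$, contradicting its minimality. Therefore $\sum_{j=i}^n b_j<a_i$.

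The only step needing care is the appeal to minimality of $H$, i.e.\ that Pommaret and minimal bases agree for stable ideals; the rest is a direct strong-stability computation. If one preferred to avoid citing \cite{mall:pom}, one could instead argue that $v\in\mathcal{I}$ has a Pommaret divisor $h'\in H$ of class $\ge i>k$. One would then need to check that this forces a second involutive divisor of $h$ besides $h$ itself. I expect that check to be the more delicate route, so I would rely on the citation.
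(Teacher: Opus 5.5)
Your proof is correct, but it takes a different route from the paper. The paper works only with involutive divisibility. It argues by downward induction on $i$. In the inductive step it uses strong stability to produce from $x_i^{a_i}$ the specific term $x_i^{a_i-(b_{i+1}+\cdots+b_n)}x_{i+1}^{b_{i+1}}\cdots x_n^{b_n}\in\mathcal{I}$. The induction hypothesis together with $a_{i+1}\le a_i$ from Lemma~\ref{ss-1} makes the exponent of $x_i$ positive, so this term has class exactly $i$ and its cofactor in $h$ involves only $x_k,\dots,x_i$. It is therefore a Pommaret divisor of $h$, which contradicts the uniqueness of involutive divisors.

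You instead switch to ordinary divisibility. This is legitimate because for a (strongly) stable ideal $H$ is the minimal generating set; the paper itself uses this fact at the start of the proof of Proposition~\ref{omega-1}. With that in hand, any degree-$a_i$ divisor of the tail works, since $\langle x_i,\dots,x_n\rangle^{a_i}\subseteq\mathcal{I}$. You need neither the induction nor the monotonicity $a_{i+1}\le a_i$.

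Your hesitation about the purely involutive variant is justified. For an arbitrary $v$, a Pommaret divisor of $v$ need not be one of $h$, because $h/v$ may contain variables beyond $\cls(v)$. The fix is to take $v$ top-heavy, as the paper does. Without induction, take the largest $m\ge i$ with $\sum_{j\ge m}b_j\ge a_i$ and set $v=x_m^{a_i-\sum_{j>m}b_j}x_{m+1}^{b_{m+1}}\cdots x_n^{b_n}$. This $v$ has class $m$ and satisfies $h/v\in\K[x_1,\dots,x_m]$. Hence any Pommaret divisor of $v$, which has class $\ge m>k$, is a second Pommaret divisor of $h$.

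In short, your proof is shorter and more transparent but relies on the external fact about stable ideals. The paper's proof needs nothing about $H$ beyond the defining direct-sum property of a Pommaret basis, at the price of an induction.
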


\begin{proof}
  We argue by induction over $i$.  For $i=n$, we must have $b_n<a_n$.
  Indeed, if we had $a_n\le b_n$, then
  \( x_n^{a_n} \mid_P x_k^{b_k}\cdots x_n^{b_n}\) contradicting the
  assumption $x_k^{b_k}\cdots x_n^{b_n}\in H$. Now, let $k<i<n$ and assume
  that the assertion holds for $i+1,\ldots,n$. Suppose, for contradiction,
  that \( a_{i}\le\sum_{j=i}^n b_j \).  By the inductive hypothesis, the
  made assumption, and the preceding lemma, we obtain
  \[
    b_{i+1}+\cdots+b_n < a_{i+1}\leq a_{i}\leq b_{i}+b_{i+1}+\cdots+b_n.
  \]
  Since $\mathcal{I}$ is strongly stable, $x_{i}^{a_{i}}\in \mathcal{I}$
  implies that
  \( x_{i}^{a_{i}-(b_{i+1}+\cdots+b_n)}x_{i+1}^{b_{i+1}}\cdots x_n^{b_n}\in
  \mathcal{I} \). Consequently,
  \( x_{i}^{a_{i}-(b_{i+1}+\cdots+b_n)}x_{i+1}^{b_{i+1}}\cdots x_n^{b_n}
  \mid_P x_{k}^{b_{k}}x_{k+1}^{b_{k+1}}\cdots x_n^{b_n}, \) which
  contradicts the assumptions that
  $x_{k}^{b_{k}}x_{k+1}^{b_{k+1}}\cdots x_n^{b_n}\in H$ and that $H$ is a
  Pommaret basis of $\mathcal{I}$.
\end{proof}

The following lemma extends this inequality to the case $i = k$.

\begin{lemma}\label{ss-3}
  For every mixed term $x_k^{b_k}\cdots x_n^{b_n}\in H$ with $c\leq k< n$,
  we have the inequalities
  $a_n\leq \sum_{i=k}^nb_i\leq a_k$.
\end{lemma}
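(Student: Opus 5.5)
Both inequalities will come from the strong stability of $\mathcal{I}$, the fact that $H$ is a Pommaret basis, and the bounds of Lemmas~\ref{ss-1} and~\ref{ss-2}. Throughout, write $b=\sum_{i=k}^n b_i$ for the degree of the mixed term $t=x_k^{b_k}\cdots x_n^{b_n}\in H$, where $b_k>0$ because $\cls(t)=k$.

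\emph{Lower bound $a_n\le b$.} Since $t\in\mathcal{I}$ and $\mathcal{I}$ is strongly stable, we may repeatedly replace each occurring variable $x_j$ with $j<n$ by $x_n$. This gives $x_n^{b}\in\mathcal{I}$. The only element of $H$ that can be an involutive divisor of a pure power of $x_n$ is $x_n^{a_n}$: a divisor of $x_n^b$ is itself a power of $x_n$, it has class $n$, and $H$ contains only one generator of class $n$ that is a pure power. Hence $x_n^{a_n}\mid x_n^{b}$, that is, $a_n\le b$.

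\emph{Upper bound $b\le a_k$.} We argue by contradiction and assume $a_k<b$. Put $s=\sum_{j=k+1}^n b_j$. Lemma~\ref{ss-2} with $i=k+1$ gives $s<a_{k+1}$, and Lemma~\ref{ss-1} gives $a_{k+1}\le a_k$; here we use $k\ge c$, so that $x_k^{a_k}\in H$. Therefore $0<a_k-s$. The assumption $a_k<b$ means $a_k-s<b_k$.

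Now start from $x_k^{a_k}\in\mathcal{I}$ and use strong stability to move $s$ factors $x_k$ to the variables $x_{k+1},\dots,x_n$ with multiplicities $b_{k+1},\dots,b_n$. This is the same move as in the proof of Lemma~\ref{ss-2}, and it yields
\[
u=x_k^{a_k-s}x_{k+1}^{b_{k+1}}\cdots x_n^{b_n}\in\mathcal{I}.
\]
The term $u$ has class $k$ because $a_k-s>0$, and it properly divides $t$ because $a_k-s<b_k$. Moreover $t/u=x_k^{b_k-a_k+s}$ involves only $x_k$, which is multiplicative for $u$, so $u$ is a proper involutive divisor of $t$.

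Since $u\in\mathcal{I}$, it has an involutive divisor $h\in H$. Involutive division is transitive, so $h$ is also an involutive divisor of $t$. By the uniqueness of involutive divisors in a Pommaret basis, this forces $h=t$. But $h$ divides $u$, which has strictly smaller degree than $t$, a contradiction. This establishes $b\le a_k$.

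The main point needing care is the transitivity used in the last step: if $h\mid_P u$ and $u\mid_P t$, then $h\mid_P t$. This holds because $\cls(h)\ge\cls(u)=k$, so $x_k$ is multiplicative for $h$ as well. Alternatively, one can avoid transitivity: $u$ is a term of $\mathcal{I}$, so it lies in $\mathcal{I}=\bigoplus_{h\in H}\K[{\rm M}_P(h)]\cdot h$. Hence $u$ is a multiple of some $h\in H$ by a term in variables $x_1,\dots,x_{\cls(h)}$. Since $u$ involves only $x_k,\dots,x_n$, we get $\cls(h)\ge k$, and then $t$ has the two distinct involutive divisors $h$ and $t$ in $H$. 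This contradicts the direct sum decomposition.
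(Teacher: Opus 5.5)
Your proof is correct and follows the paper's argument. For the lower bound, strong stability moves all of $t$ onto $x_n$, giving $x_n^{b}\in\mathcal{I}$; for the upper bound, Lemmata~\ref{ss-1} and~\ref{ss-2} together with strong stability give $x_k^{a_k-s}x_{k+1}^{b_{k+1}}\cdots x_n^{b_n}\in\mathcal{I}$. Your transitivity (or direct-sum) argument simply spells out the step the paper compresses into ``because $x_k^{b_k}\cdots x_n^{b_n}\in H$, we must have $b_k\le a_k-\sum_{i=k+1}^n b_i$.''
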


\begin{proof}
  For an indirect proof of the first inequality, we assume
  $\sum_{i=k}^n b_i < a_n$.  Since $\mathcal{I}$ is strongly stable and
  $x_k^{b_k}\cdots x_n^{b_n}\in H$, it follows that
  $x_n^{\sum_{i=k}^n b_i}\in \mathcal{I}$.  This contradicts the facts that
  $x_n^{a_n}\in H$ and that $H$ is a Pommaret basis of
  $\mathcal{I}$. Hence, $a_n \leq \sum_{i=k}^n b_i$.

  To prove the other inequality, we note that, by Lemmata~\ref{ss-1} and
  \ref{ss-2}, we have \( \sum_{i=k+1}^n b_i < a_{k+1} \leq a_k, \) implying
  that \( 0 < a_k - \sum_{i=k+1}^n b_i.  \) Since $\mathcal{I}$ is strongly
  stable and $x_k^{a_k}\in H$, it follows that
  \( x_k^{\,a_k-\sum_{i=k+1}^n b_i} x_{k+1}^{b_{k+1}}\cdots x_n^{b_n} \in
  \mathcal{I} \). Because $x_k^{b_k}\cdots x_n^{b_n}\in H$, we must have
  \( b_k \leq a_k - \sum_{i=k+1}^n b_i \), which shows the desired
  inequality.
\end{proof}

\begin{lemma}\label{ss-4}
    $\mathrm{reg}\bigl(\mathcal{I}+\langle x_1,\ldots,x_{D}\rangle\bigr)=a_c$.
  \end{lemma}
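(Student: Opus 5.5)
Our plan is to compute a Pommaret basis of $\mathcal{J}=\mathcal{I}+\langle x_1,\ldots,x_D\rangle$ directly from $H$ and then apply Theorem~\ref{regpom}. Since $\mathcal{I}$ is a monomial ideal, $\mathcal{J}$ is again monomial. Let $\bar{\mathcal I}\subset\mathcal{K}[x_c,\ldots,x_n]$ be the ideal obtained from $\mathcal{I}$ by setting $x_1=\cdots=x_D=0$. Iterating the argument used in the proof of Proposition~\ref{reg12}(i), i.e.\ \cite[Lemma~3.1]{zbMATH06126804} $D$ times, the elements of $H$ of class at least $c$ form the Pommaret basis $\bar H$ of $\bar{\mathcal I}$. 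This uses the observation made after Lemma~\ref{ss-1} that no element of $H$ lives only in $x_1,\ldots,x_D$, so every element of $H$ of class $\le D$ vanishes modulo $\langle x_1,\ldots,x_D\rangle$. The Pommaret basis of $\mathcal{J}$ is then $\{x_1,\ldots,x_D\}\cup\bar H$, because $\mathcal{J}=\langle x_1,\ldots,x_D\rangle\oplus\bar{\mathcal I}\,\mathcal{K}[x_c,\ldots,x_n]$ as vector spaces. The linear generators have degree $1$, so Theorem~\ref{regpom} gives $\mathrm{reg}(\mathcal{J})=\max\{1,\deg(\bar H)\}=\deg(\bar H)$; if $\mathcal{I}$ is the unit ideal, the statement is handled trivially.

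It remains to show $\deg(\bar H)=a_c$. By Lemma~\ref{ss-1}, $x_c^{a_c}\in\bar H$, which gives $\deg(\bar H)\ge a_c$. Every other element of $\bar H$ is either a pure power $x_i^{a_i}$ with $i>c$, which has degree $a_i\le a_c$ by Lemma~\ref{ss-1}, or a mixed term $x_k^{b_k}\cdots x_n^{b_n}$ with $c\le k<n$. For a mixed term, Lemma~\ref{ss-3} gives the degree bound $\sum_{i=k}^n b_i\le a_k\le a_c$, again using Lemma~\ref{ss-1}. Hence $\deg(\bar H)=a_c$.

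The step most in need of care is justifying that $\bar H$ consists exactly of the class-$\ge c$ elements of $H$, and that these are exactly the pure powers and the mixed terms covered by Lemma~\ref{ss-3}. For monomial ideals this should be straightforward. The direct sum decomposition $\mathcal{I}=\bigoplus_{h\in H}\mathcal{K}[x_1,\ldots,x_{\cls(h)}]\,h$, taken modulo $\langle x_1,\ldots,x_D\rangle$, kills precisely the cones of generators of class $\le D$ and the parts of the other cones involving $x_1,\ldots,x_D$. What survives is the Pommaret decomposition of $\bar{\mathcal I}$ with respect to the multiplicative variables $x_c,\ldots,x_{\cls(h)}$. Every term of class $\ge c$ that is not a pure power is a mixed term $x_k^{b_k}\cdots x_n^{b_n}$ with $c\le k<n$, so the case distinction above is exhaustive.
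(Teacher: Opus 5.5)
Your identification of $\bar H$ (the elements of $H$ of class at least $c$) as the Pommaret basis of $\bar{\mathcal I}\subset\mathcal{K}[x_c,\ldots,x_n]$ is fine. So is your computation $\deg(\bar H)=a_c$ from Lemmas~\ref{ss-1} and~\ref{ss-3}.

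The step that fails is the claim that $\{x_1,\ldots,x_D\}\cup\bar H$ is the Pommaret basis of $\mathcal J=\mathcal I+\langle x_1,\ldots,x_D\rangle$ in $\mathcal R$. Your vector-space splitting of $\mathcal J$ is correct, but it is not a Pommaret decomposition. The cone of $x_j$ is only $\mathcal{K}[x_1,\ldots,x_j]\cdot x_j$, and the cone of $h\in\bar H$ contains only multiples of $h$. Hence a term such as $x_1x_c$ lies in $\mathcal J$ but in none of these cones as soon as $D\geq 1$ and $a_c\geq 2$. The smallest example is $\mathcal I=\langle x_2^2\rangle\subset\mathcal{K}[x_1,x_2]$, with $D=1$ and $a_2=2$. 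Here $\mathcal J=\langle x_1,x_2^2\rangle$ has Pommaret basis $\{x_1,\,x_1x_2,\,x_2^2\}$.

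In general, the Pommaret basis of $\mathcal J$ also contains every $x_jm$ with $1\leq j\leq D$ and $m$ a term in $x_c,\ldots,x_n$ not lying in $\mathcal I$. So applying Theorem~\ref{regpom} to your set does not compute $\mathrm{reg}(\mathcal J)$. You still need to show that these extra generators have degree at most $a_c$. That is precisely the content of the paper's proof. By strong stability, $x_c^{a_c}\in\mathcal I$ forces every term of degree at least $a_c$ in $x_c,\ldots,x_n$ into $\mathcal I$. Hence every such $m$ has degree below $a_c$, and $\deg(x_jm)\leq a_c$.

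Alternatively, you can avoid the Pommaret basis of $\mathcal J$ altogether by doing what the proof of Proposition~\ref{reg12}(i) actually does, and what Proposition~\ref{omega-2} does with $i=c$. Use the isomorphism $\mathcal R/\mathcal J\cong\mathcal{K}[x_c,\ldots,x_n]/\bar{\mathcal I}$. Note that the regularity does not change when passing to the smaller polynomial ring: the minimal resolution over $\mathcal R$ is obtained by tensoring with the Koszul complex on $x_1,\ldots,x_D$, which shifts homological and internal degrees equally. Then apply Theorem~\ref{regpom} to $\bar H$ inside $\mathcal{K}[x_c,\ldots,x_n]$.

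With this repair, your route is a legitimate and somewhat cleaner alternative to the paper's. It never needs the extra generators $x_jm$. It also makes explicit the appeal to Lemma~\ref{ss-3} for the mixed terms of class at least $c$, which the paper's proof does not spell out (it cites only Lemma~\ref{ss-1} at the end).
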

  
\begin{proof}
  By adding the variables $x_1, \ldots, x_{D}$ to the ideal, only terms of
  class at least $c$ remain unchanged. Accordingly, the ideal
  $\mathcal{I} + \langle x_1, \ldots, x_{D} \rangle$ is generated by the
  variables $x_1, \ldots, x_{D}$, the pure powers
  $x_c^{a_c}, \ldots, x_n^{a_n}$, and the mixed terms of the form
  $x_k^{b_k}\cdots x_n^{b_n}$ with $c \leq k < n$.  Since
  $\mathcal{I}+\langle x_1,\ldots,x_{D}\rangle$ is zero-dimensional, it
  follows that this ideal is quasi-stable. Now, we proceed to compute the
  degree of its Pommaret basis. We claim that the elements obtained by
  multiplying the generators $x_1,\ldots,x_D$ with some non-multiplicative
  variables $x_c, \ldots,x_n$ have degree at most $a_c$.

  Suppose, for the sake of contradiction, that a term
  $x_c^{b_c}\cdots x_n^{b_n}$ of degree at least $a_c$ is multiplied by
  some variable $x_j$ with $1 \leq j \leq D$, and that
  $x_j \cdot x_c^{b_c}\cdots x_n^{b_n}$ has no Pommaret divisor in
  $H$. Thus, $a_c\le \sum_{i=c}^n b_i$. Note that all variables
  $x_c, \ldots, x_n$ are non-multiplicative for $x_j$. Moreover, by
  Lemma~\ref{ss-1}, we have $b_n \leq a_n \leq a_c$.

  Since $\mathcal{I}$ is strongly stable and $x_c^{a_c}\in \mathcal{I}$,
  there exists a term
  $x^\gamma := x_k^{c_k} x_{k+1}^{b_{k+1}} \cdots x_n^{b_n}$ with
  $k \geq c$, satisfying $c_k + \sum_{i=k+1}^n b_i = a_c$ and
  $c_k \leq b_k$, such that $x^\gamma \in \mathcal{I}$.  Since
  $x_c, \ldots, x_k$ are multiplicative for $x^\gamma$, it follows that
  $x_c^{b_c}\cdots x_n^{b_n}\in \mathcal{I}$. Furthermore, because $x_j$ is
  multiplicative for this term, we conclude that
  $x_j x_c^{b_c}\cdots x_n^{b_n}$ has a Pommaret divisor in $H$,
  contradicting our assumption.

  Hence, the claim follows: the terms that must be multiplied with $x_j$
  for $1 \leq j \leq D$ in the computation of the Pommaret basis have a
  degree at most $a_c - 1$. Consequently, by Lemma \ref{ss-1} the Pommaret
  basis of $\mathcal{I} + \langle x_1, \ldots, x_{D} \rangle$ has degree
  $a_c$, and the proof is complete.
\end{proof}

\begin{proposition}\label{ss-5}
    If $\mathcal{R}/\mathcal{I}$ is Cohen-Macaulay, then $\mathrm{reg}(\mathcal{I})=a_c$.
  \end{proposition}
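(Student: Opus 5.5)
The plan is to use the Pommaret basis $H$ directly, together with Theorem~\ref{regpom}, which gives $\mathrm{reg}(\mathcal{I})=q=\deg(H)$. It therefore suffices to show that every element of $H$ has degree at most $a_c$. The reverse inequality $q\ge a_c$ is immediate, since $x_c^{a_c}\in H$ by Lemma~\ref{ss-1}.

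The first step translates the Cohen--Macaulay hypothesis into a statement about classes in $H$. From Section~\ref{sec:2}, $\mathrm{depth}(\mathcal{R}/\mathcal{I})=d-1$, where $d$ is the minimal class of an element of $H$. The dimension $D$ is the minimal index such that pure powers of all $x_j$ with $j>D$ lie in $H$, so $c=D+1$. Cohen--Macaulayness means $d-1=D$, i.e.\ $d=c$. Hence every element of $H$ has class at least $c$, and so involves only the variables $x_c,\dots,x_n$.

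Consequently, every element of $H$ is either a pure power $x_k^{a_k}$ with $k\ge c$, or a mixed term $x_k^{b_k}\cdots x_n^{b_n}$ with $c\le k<n$. For the pure powers, Lemma~\ref{ss-1} gives $a_k\le a_c$. For the mixed terms, Lemma~\ref{ss-3} gives $\deg=\sum_{i=k}^n b_i\le a_k\le a_c$, again using Lemma~\ref{ss-1}. Thus $q=a_c$.

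Alternatively, since $H$ contains no elements of class $\le D$, it is a Pommaret basis of $\mathcal{I}$ whose restriction to $x_1=\dots=x_D=0$ is unchanged, and one could appeal to Lemma~\ref{ss-4}. Making that reduction precise is more work than the direct degree count, so I would not take this route.

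The only real subtlety is the step in the first paragraph: we need that elements of class $k$ in a Pommaret basis involve no variables of index below $k$. This holds by the definition of class as the minimal index with nonzero exponent. We also need the identification $c=D+1=d$, which rests on the cited facts about depth and dimension from Pommaret bases.
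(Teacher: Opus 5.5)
Your proof is correct and follows the paper's route: first show that every element of $H$ has class at least $c$, then bound the degrees of the pure powers by Lemma~\ref{ss-1} and of the mixed terms by Lemma~\ref{ss-3}. The only difference is in the first step. You obtain the class bound directly from $\mathrm{depth}(\mathcal{R}/\mathcal{I})=\min_{h\in H}\cls(h)-1$ and the pure-power description of $D$, both recalled in Section~\ref{sec:2}. The paper instead argues via the characterisation of $D$ through $\langle H,x_1,\ldots,x_i\rangle_q=\mathcal{R}_q$, so your version of that step is, if anything, cleaner.
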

  
\begin{proof}
  We claim that $H$ contains no mixed term of the form
  $x_k^{b_k}\cdots x_n^{b_n}$ with $k \leq D$.  Suppose, for contradiction,
  that $H$ contains such a term. Since $\mathcal{R}/\mathcal{I}$ is
  Cohen--Macaulay, the properties of Pommaret bases imply that
  $\langle H, x_1, \ldots, x_{k-1} \rangle_{q} = \mathcal{R}_{q}$.  On the
  other hand, by definition,
  $D = \min{\{i \mid \langle H, x_1, \ldots, x_i \rangle_{q} =
    \mathcal{R}_{q}\}}$ and since $k-1 < D = \dim(\mathcal{I})$, we obtain
  a contradiction. Therefore, the claim holds.  Thus, no mixed term in $H$
  has class less than $c$, and we conclude that
  $\min_{h \in H} \cls(h) = c$.  By Lemma~\ref{ss-3}, if
  $x_k^{b_k}\cdots x_n^{b_n} \in H$ with $c \leq k < n$, then
  $\sum_{i=k}^{n} b_i \leq a_k$. Combining this result with
  Lemma~\ref{ss-1}, we deduce that the maximal degree of an element of $H$
  is $a_c$ and $\mathrm{reg}(\mathcal{I}) = a_c$.
\end{proof}

If $\mathcal{R}/\mathcal{I}$ is not Cohen-Macaulay, then $H$ contains
elements of class less than \(c\), which are necessarily mixed terms. But
if we assume that none of them has a degree larger than $a_c$, then the
claim of the proposition remains true.

\begin{corollary}\label{ss-6}
  If any term in $H$ involving at least one of the variables
  $x_1, \ldots, x_{D}$ has degree at most $a_c$, then
  $\mathrm{reg}(\mathcal{I}) = a_c$.
\end{corollary}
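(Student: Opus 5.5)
Since $\mathcal{I}$ is strongly stable, hence quasi-stable, Theorem~\ref{regpom} gives $\mathrm{reg}(\mathcal{I})=q=\deg(H)$. So the claim is equivalent to showing that the maximal degree of an element of $H$ is exactly $a_c$. I will bound this degree from below and from above by splitting $H$ into three kinds of generators.

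The lower bound is immediate. By Lemma~\ref{ss-1}, the pure power $x_c^{a_c}$ lies in $H$, so $q\ge a_c$.

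For the upper bound, let $h=x^\beta\in H$ be arbitrary; I distinguish three cases.

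\begin{enumerate}[(a)]
\item If $h$ is one of the pure powers $x_c^{a_c},\ldots,x_n^{a_n}$, then Lemma~\ref{ss-1} gives $\deg h=a_i\le a_c$.
\item If $h$ is a mixed term $x_k^{b_k}\cdots x_n^{b_n}$ with $c\le k<n$, then Lemma~\ref{ss-3} gives $\deg h=\sum_{i=k}^n b_i\le a_k\le a_c$, again using the monotonicity from Lemma~\ref{ss-1}.
\item Otherwise $h$ involves at least one of $x_1,\ldots,x_D$, and the hypothesis of the corollary gives $\deg h\le a_c$ directly.
\end{enumerate}

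The one point that needs care is that these three cases exhaust $H$. Suppose $h\in H$ does not involve $x_1,\ldots,x_D$. Then $\cls(h)\ge c$. If $h$ is a pure power $x_i^m$, then $i\ge c$, and $x_i^m$ must coincide with $x_i^{a_i}$. Indeed, $x_i^{a_i}$ and $x_i^m$ both lie in $H$, so the one with the smaller exponent would be a Pommaret divisor of the other (every $x_j$ with $j\le i$ is multiplicative for a power of $x_i$), contradicting the involutive uniqueness of $H$. If $h$ is not a pure power, it is a mixed term of class $k$ with $c\le k<n$, which is case (b). (The term $h=1$ is excluded, since $\mathcal{I}$ is a proper ideal.)

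Combining the two bounds gives $q=a_c$, hence $\mathrm{reg}(\mathcal{I})=a_c$. The proof is thus a short case analysis that reuses Lemmata~\ref{ss-1} and~\ref{ss-3} exactly as in the proof of Proposition~\ref{ss-5}. The only role of the Cohen--Macaulay assumption there was to rule out generators of class less than $c$, and here the degree hypothesis replaces it. The only mildly delicate step is the exhaustiveness check above.
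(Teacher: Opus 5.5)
Your proof is correct and follows the paper's route. The paper obtains the corollary by rerunning the proof of Proposition~\ref{ss-5}: pure powers are bounded via Lemma~\ref{ss-1}, mixed generators of class at least $c$ via Lemma~\ref{ss-3}, and the degree hypothesis on generators involving $x_1,\ldots,x_D$ replaces the Cohen--Macaulay assumption. Your explicit check that the three cases exhaust $H$, including the uniqueness of the pure power $x_i^{a_i}$, spells out what the paper leaves implicit.
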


We now proceed to recall the notion of partial degree for an ideal and
study its relation with the Castelnuovo-Mumford regularity, see
\citep{axial} for more details.

\begin{definition}\label{pd-def}
  Let $\mathcal{I}\subset \mathcal{R}$ be a monomial ideal with minimal
  generating set $G$.  For each $1\leq i\leq n$, we define
  $\omega_i(\mathcal{I})=\max {\{\mathrm{d}_i(x^\alpha)\mid x^\alpha \in
    G\}}$ where the $i$-th \emph{partial degree} of $x^\alpha$ with
  $\alpha=(\alpha_1,\ldots ,\alpha_n)$ is given by
  $\mathrm{d}_i(x^\alpha)=\sum_{j\geq i}\alpha_j$.
\end{definition}

\begin{proposition}\label{omega-1}
  Let $\mathcal{I}$ be a strongly stable ideal. For all indices
  $1\leq i\leq n$, we have
  $$\mathrm{reg}\,\bigl(\mathcal{I}+\langle
  x_1,\ldots,x_{i-1}\rangle\bigr)=\omega_i(\mathcal{I})\,.$$
\end{proposition}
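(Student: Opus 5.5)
We reduce everything to the Pommaret basis and Theorem~\ref{regpom}. Write $\mathcal{J}_i=\mathcal{I}+\langle x_1,\ldots,x_{i-1}\rangle$. For a monomial ideal, this sum is generated by $x_1,\ldots,x_{i-1}$ together with the images $\bar g$ of the minimal generators $g\in G$ under $x_1=\cdots=x_{i-1}=0$. Only those $g$ survive that do not involve $x_1,\ldots,x_{i-1}$, and for these $\deg(g)=\mathrm{d}_i(g)$. The first observation is that $\mathcal{J}_i$ is again strongly stable, since the variables $x_1,\ldots,x_{i-1}$ may be moved upwards, and that it is isomorphic to $\mathcal{K}[x_1,\ldots,x_{i-1}]$-linear variables plus the ideal $\bar{\mathcal{I}}=\mathcal{I}\cap\mathcal{K}[x_i,\ldots,x_n]$ extended. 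Its regularity therefore equals $\mathrm{reg}(\bar{\mathcal{I}})$, computed in $\mathcal{K}[x_i,\ldots,x_n]$. To justify this, we would note that adding a variable that does not occur in the other generators does not change the regularity: the Koszul tensor, or equivalently, the Pommaret basis of the sum, is the union of the variables and the basis of $\bar{\mathcal I}$ up to products with lower classes of degree no larger.

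The key fact to be used is the classical statement that for a stable ideal the minimal generating set is the Pommaret basis, so that by Theorem~\ref{regpom} the regularity is the maximal degree of a minimal generator (Eliahou--Kervaire). I would prove the special case needed directly: for a strongly stable ideal, each non-multiplicative prolongation $x_k g$ with $k>\cls(g)$ has a Pommaret divisor among minimal generators. Indeed $x_kg/x_{\cls(g)}\cdot x_{\cls(g)}$; by strong stability $x_kg/x_{\cls(g)}\in\mathcal{I}$, and it has a generator divisor $g'$ with $x_kg=g'\cdot m$, where a degree/class check shows that $m$ only involves variables $\le\cls(g')$. Hence $\mathrm{reg}(\mathcal{I})=\deg(G)$.

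Applying this to the strongly stable ideal $\bar{\mathcal I}$ in $\mathcal{K}[x_i,\ldots,x_n]$, we obtain that $\mathrm{reg}(\mathcal J_i)$ is the maximal degree of a minimal generator of $\bar{\mathcal I}$. The remaining and main step is to show that this maximum equals $\omega_i(\mathcal I)=\max_{g\in G}\mathrm{d}_i(g)$, where $g$ also ranges over generators involving $x_1,\ldots,x_{i-1}$. Only $\le$ is nontrivial in the other direction, namely that minimal generators of $\bar{\mathcal I}$ come from $G$: a minimal generator of $\bar{\mathcal I}$ is a term of $\mathcal I$ in $x_i,\ldots,x_n$, hence divisible by some $g\in G$ not involving low variables, so minimal generators of $\bar{\mathcal I}$ are exactly those $g\in G$ free of $x_1,\ldots,x_{i-1}$, which gives $\mathrm{reg}\le\omega_i$ and the inclusion $\ge$ restricted to those. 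For a generator $g$ involving some $x_j$ with $j<i$, we use strong stability to replace all such powers by $x_i$: $g'=g\cdot x_i^{s}/(x_1^{\alpha_1}\cdots x_{i-1}^{\alpha_{i-1}})\in\bar{\mathcal I}$ of degree $\deg g\ge\mathrm{d}_i(g)$. This does not immediately give a generator of degree $\ge\mathrm{d}_i(g)$, so instead we should argue via $g''=g/(x_1^{\alpha_1}\cdots x_{i-1}^{\alpha_{i-1}})$: it is not in $\mathcal I$ by minimality, yet $x_i^s g''\in\mathcal I$. I expect this to be the main obstacle. I would try to show that then the minimal generators dividing $x_i^sg''$ for the smallest such $s$ must have degree $\ge\deg g''+1>\mathrm{d}_i(g)$ or at least $\ge\mathrm{d}_i(g)$, which yields $\mathrm{reg}(\mathcal J_i)\ge\omega_i$ and completes the proof.
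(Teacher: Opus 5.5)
Your proposal has a genuine gap at its central step. The reduction itself is correct, and once completed it is shorter than the paper's argument. Put $S=\K[x_i,\ldots,x_n]$. Then $\mathcal{R}/\mathcal{J}_i\cong S/\bar{\mathcal{I}}$, and $\bar{\mathcal{I}}$ is strongly stable in $S$ with minimal generating set $G\cap S$. So the classical fact (minimal basis $=$ Pommaret basis) together with Theorem~\ref{regpom} gives $\mathrm{reg}(\mathcal{J}_i)=\max\{\deg g\mid g\in G\cap S\}$. This avoids the paper's case split and its permutation of variables in Case~1, much as Proposition~\ref{omega-2} does. However, it only yields $\mathrm{reg}(\mathcal{J}_i)\leq\omega_i(\mathcal{I})$. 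The reverse inequality is the real content of the proposition: no generator involving $x_1,\ldots,x_{i-1}$ may have $\mathrm{d}_i$ exceeding every degree in $G\cap S$. You leave this at ``I would try to show'', so as written the proof is incomplete.

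Your suggested route does close the gap with one strong-stability exchange, the same trick the paper uses at the end of Case~2 (there phrased as a contradiction for a maximiser of $\mathrm{d}_i$).
\begin{itemize}
\item Write $g=x_1^{\alpha_1}\cdots x_{i-1}^{\alpha_{i-1}}g''\in G$ with $g''=x_i^{\ell_i}\cdots x_n^{\ell_n}$.
\item Let $s_0$ be minimal with $x_i^{s_0}g''\in\mathcal{I}$. It exists by strong stability, and $s_0\geq1$ because $g''\notin\mathcal{I}$ by minimality of $g$.
\item Let $h=x_i^{b_i}\cdots x_n^{b_n}\in G$ divide $x_i^{s_0}g''$. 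Then $b_i=\ell_i+s_0$, since otherwise $h\mid x_i^{s_0-1}g''$.
\item If $b_k<\ell_k$ for some $k>i$, then $hx_k/x_i\in\mathcal{I}$ by strong stability, and it divides $x_i^{s_0-1}g''$. This again contradicts the choice of $s_0$.
\item Hence $h=x_i^{s_0}g''$ is itself a minimal generator lying in $S$, of degree $\mathrm{d}_i(g)+s_0>\mathrm{d}_i(g)$. This gives $\mathrm{reg}(\mathcal{J}_i)\geq\omega_i(\mathcal{I})$.
\end{itemize}

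Two side remarks. First, your ``first observation'' that $\mathcal{J}_i$ is strongly stable is false. For $i\geq2$, $x_1\in\mathcal{J}_i$ would force $x_i\in\mathcal{J}_i$, i.e.\ $x_i\in\mathcal{I}$. In general $\mathcal{J}_i$ is not even quasi-stable (cf.\ the example after Corollary~\ref{ss-axial-15}), so your alternative justification via ``the Pommaret basis of the sum'' does not work in these coordinates. Neither claim is needed: the isomorphism $\mathcal{R}/\mathcal{J}_i\cong S/\bar{\mathcal{I}}$, or the Koszul tensor argument, already gives $\mathrm{reg}(\mathcal{J}_i)=\mathrm{reg}(\bar{\mathcal{I}})$. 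Second, your sketch that the minimal generators of a strongly stable ideal form its Pommaret basis leaves the ``degree/class check'' undone. Simply cite this classical fact, as the paper does.
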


\begin{proof}
  We first note that the Pommaret basis and the minimal generating set of a
  strongly stable ideal coincide.  We then distinguish two cases based on
  the value of $i$.

  \textbf{(Case 1)} Assume that $1\leq i\leq D$.  In order to prove the
  desired equality, we transform
  $\mathcal{I}+\langle x_1,\ldots,x_{i-1}\rangle$ into a quasi-stable ideal
  by a suitable permutation of variables.  We define an invertible linear
  change of variables $\varphi$ as follows: For any $c \leq j \leq n$, set
  $x_j \mapsto y_j$, and for the remaining variables we define
  \begin{align*}
    x_1 &\mapsto y_{c-i+1}, \quad
          x_2 \mapsto y_{c-i+2}, \quad \ldots, \quad
          x_{i-1} \mapsto y_{c-1}, \\
    x_i &\mapsto y_1, \quad
          x_{i+1} \mapsto y_2, \quad \ldots, \quad
          x_{c-1} \mapsto y_{c-i}.
  \end{align*}
  As a result, the image of
  \(\langle H \rangle + \langle x_1, \ldots, x_{i-1} \rangle\) under this
  change becomes
  $\langle \varphi(H) \rangle + \langle y_{c-i+1}, \ldots, y_{c-1}
  \rangle$.  Since an invertible linear change of variables preserves the
  regularity of ideals, we obtain
  $\mathrm{reg}(\mathcal{I} + \langle x_1, \ldots, x_{i-1} \rangle) =
  \mathrm{reg}(\varphi(\mathcal{I}) + \langle y_{c-i+1}, \ldots, y_{c-1}
  \rangle)$. We claim that the Pommaret basis of
  $\varphi(\mathcal{I}) + \langle y_{c-i+1}, \ldots, y_{c-1} \rangle$ is
  finite, and hence the resulting ideal is quasi-stable. Then, by computing
  the degree of its Pommaret basis, we can determine the regularity of the
  original ideal.

  We show that the Pommaret basis of
  $\varphi(\mathcal{I}) + \langle y_{c-i+1}, \ldots, y_{c-1} \rangle$ can
  be obtained from $\varphi(H)$, the variables $y_{c-i+1},\ldots,y_{c-1}$,
  and the elements obtained by multiplying these variables by their
  non-multiplicative variables. Note that the non-multiplicative variables
  for each $y_j$ with $c-i+1\leq j\leq c-1$ are
  $y_{j+1},\ldots,y_{c-1},y_c,\ldots,y_n$. However, since
  $y_{j+1},\ldots,y_{c-1}$ are already present in the ideal and $y_j$ is
  multiplicative for them, it is sufficient to consider multiplication by
  $y_c,\ldots,y_n$. We now show that every term involving these variables
  that is multiplied by some $y_j$ has degree at most $a_c-1$. Suppose, for
  contradiction, that $y_c^{b_c}\cdots y_n^{b_n}$ of degree at least $a_c$
  is multiplied by $y_j$. Since $\mathcal{I}$ is strongly stable and
  $y_c^{a_c}\in\varphi(\mathcal{I})$, we deduce that the term
  $y^\gamma:=y_k^{c_k}y_{k+1}^{b_{k+1}}\cdots y_n^{b_n}$ with $k\geq c$,
  $c_k+\sum_{t=k+1}^nb_t=a_c$ and $c_k\leq b_k$, belongs to
  $\varphi(\mathcal{I})$.  Because the variables $y_j,y_c,\ldots,y_k$ are
  multiplicative for $y^\gamma$, we have
  $y^\gamma\mid_P y_jy_c^{b_c}\cdots y_n^{b_n}$ and therefore this term
  will not appear in the Pommaret basis.  This contradiction proves the
  claim. Thus, the required terms have degree at most $a_c-1$.

  Next, we examine the elements of $\varphi(H)$.  Observe that after adding
  the variables $y_{c-i+1},\ldots,y_{c-1}$, only those elements of
  $\varphi(H)$ involving $y_1,\ldots,y_{c-i},y_c,\ldots,y_n$ remain of
  interest. Among these, terms of class at least $c$ need no further
  analysis. Consider a term
  $y^\mu:=y_k^{b_k}\cdots y_\ell^{b_\ell}y_c^{b_c}\cdots y_n^{b_n}$ with
  $1\leq k\leq\ell\leq c-i$. Its non-multiplicative variables are
  $y_{k+1},\ldots,y_n$. It is clear that $y^\mu=\varphi(x^\mu)$ with
  $x^\mu:=x_{k+i-1}^{b_k}\cdots x_{\ell+i-1}^{b_\ell}x_c^{b_c}\cdots
  x_n^{b_n}$. Since multiplication of $x^\mu$ by its non-multiplicative
  variables $x_{k+i},\ldots,x_n$ (or equivalently
  $y_{k+1},\ldots,y_{c-i},y_c,\ldots,y_n$) has already been considered in
  the computation of $H$, no new element will be added to
  $\varphi(H)$. Hence the Pommaret basis of
  $\varphi(\mathcal{I})+\langle y_{c-i+1},\ldots,y_{c-1}\rangle$ is
  finite. From the above arguments, the degree of the Pommaret basis of
  this ideal is equal to the maximum degree of the elements involving the
  variables $y_1,\ldots,y_{c-i},y_c,\ldots,y_n$ which correspond to
  $x_i,\ldots,x_n$.

  \textbf{(Case 2)} Suppose $c\leq i\leq n$. The proof in this case is
  similar to that of Lemma~\ref{ss-4}.  Indeed, the quasi-stability of
  $\mathcal{I}+\langle x_1,\ldots,x_{i-1}\rangle$ follows from its
  zero-dimensionality. Moreover, the generating set of this ideal clearly
  consists only of $x_1,\ldots,x_{i-1}$ and terms of class at least
  $i$. Thus, to compute the degree of its Pommaret basis, it suffices to
  examine the multiplication of the non-multiplicative variables
  $x_i,\ldots,x_n$ (with $i\geq c$) by $x_1,\ldots,x_{i-1}$. We claim that
  any such product has degree at most $a_i$. Suppose, for contradiction,
  that there exists $m:=x_jx_i^{\ell_i}\cdots x_n^{\ell_n}$ with
  $1\leq j\leq i-1$ of degree at least $a_i+1$. Since $\mathcal{I}$ is
  strongly stable and $x_i^{a_i}\in\mathcal{I}$, the term
  $x^\gamma:=x_k^{c_k}x_{k+1}^{\ell_{k+1}}\cdots x_n^{\ell_n}$ with
  $k\geq i$, $c_k+\sum_{t=k+1}^n\ell_t=a_i$, and $c_k\leq \ell_k$ belongs
  to $\mathcal{I}$. Because $x_j,x_i,\ldots,x_n$ are multiplicative for
  $x^\gamma$, we conclude that $x^\gamma\ |_P\ m$, so $m$ is not added to
  the Pommaret basis and thus, the resulting terms have degree at most
  $a_i$.

  We now claim that $\omega_i(\mathcal{I})$ can be computed using only
  those elements of $H$ of class at least $i$. In other words, the value of
  $\mathrm{d}_i$ on terms of class smaller than $i$ is smaller than its
  value on terms of class at least $i$. Let
  $x^\alpha:=x^{\beta} x_i^{\ell_i}\cdots x_n^{\ell_n}\in H$ where
  $x^{\beta}\in\K[x_1,\ldots,x_{i-1}]$ and
  $\mathrm{d}_i(x^\alpha)=\omega_i(\mathcal{I})$. We show that
  $\deg(x^\beta)=0$. Assume, to the contrary, that $\deg(x^\beta)>0$. Since
  $\mathcal{I}$ is strongly stable,
  $\frac{x^{\beta}}{x_j}x_i^{\ell_i+1}x_{i+1}^{\ell_{i+1}}\cdots
  x_n^{\ell_n}\in \mathcal{I}$ for some $j<i$ with $x_j\ |\ x^\beta$. This
  term is a multiple of $x^{\gamma}:=x^{\mu}x_i^{b_i}\cdots x_n^{b_n}\in H$
  where $x^{\mu}\ |\ x^{\beta}/x_j$, $b_i\leq \ell_i+1$ and
  $b_k\leq \ell_k$ for every $i+1\leq k\leq n$. If $b_i\leq\ell_i$, then
  $x^\gamma\ |\ x^\alpha$, contradicting the fact that $H$ is a basis; thus
  $b_i=\ell_i+1$. From the maximality of $\mathrm{d}_i(x^\alpha)$, we have
  $b_i+b_{i+1}+\cdots+b_n\leq\ell_i+\ell_{i+1}+\cdots+\ell_n$. Substituting
  $b_i=\ell_i+1$ gives $1+b_{i+1}+\cdots+b_n\leq \ell_{i+1}+\cdots+\ell_n$
  and thus $b_{i+1}+\cdots+b_n<\ell_{i+1}+\cdots+\ell_n$, so there exists
  $k>i$ with $b_k<\ell_k$. Since $\mathcal{I}$ is strongly stable,
  $x^{\mu}x_i^{b_i}\cdots x_k^{b_k}\cdots x_n^{b_n}\in\mathcal{I}$ and
  $b_i=\ell_i+1$, we have
  $x^{\mu}x_i^{\ell_i}\cdots x_k^{b_{k}+1}\cdots x_n^{b_n}\in
  \mathcal{I}$. This term divides $x^\alpha$, contradicting the fact that
  $H$ is a basis. Consequently, the computation of $\omega_i(\mathcal{I})$
  requires only those elements of $H$ whose class is at least $i$, which
  completes the proof.
\end{proof}

By applying Lemma~\ref{ss-3}, we conclude that the maximal degree among the
terms involving only $x_i,\ldots,x_n$ (for $c\le i\le n$) is precisely
$a_i$. Therefore, from the last part of the above proof, we obtain:

\begin{corollary}\label{ss-axial-15}
  $\mathrm{reg}\,\bigl(\mathcal{I}+\langle x_1,\ldots,x_{i-1}\rangle\bigr)=
  a_i=\omega_i(\mathcal{I})$ for $c\leq i\leq n$.
\end{corollary}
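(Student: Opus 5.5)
Corollary~\ref{ss-axial-15} asserts two equalities for $c\le i\le n$: first $\mathrm{reg}\bigl(\mathcal{I}+\langle x_1,\ldots,x_{i-1}\rangle\bigr)=\omega_i(\mathcal{I})$, and second $\omega_i(\mathcal{I})=a_i$. The first is exactly Proposition~\ref{omega-1} (Case~2), so the real content is identifying $\omega_i(\mathcal{I})$ with the exponent $a_i$ of the pure power $x_i^{a_i}\in H$ from Lemma~\ref{ss-1}. Since $\mathcal{I}$ is strongly stable, its Pommaret basis $H$ coincides with the minimal generating set $G$. Hence $\omega_i(\mathcal{I})=\max_{h\in H}\mathrm{d}_i(h)$.

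The plan has three steps. First, I reuse the final claim in the proof of Proposition~\ref{omega-1}. It shows that the maximum of $\mathrm{d}_i$ over $H$ is attained at a term with no factor from $x_1,\ldots,x_{i-1}$, that is, at an element of $H\cap\K[x_i,\ldots,x_n]$. So it suffices to compute $\max\{\deg h\mid h\in H,\ h\in\K[x_i,\ldots,x_n]\}$, because for such $h$ we have $\mathrm{d}_i(h)=\deg h$.

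Second, I bound the degrees of these terms. The pure power $x_i^{a_i}$ lies in this set and has degree $a_i$. Among the pure powers $x_j^{a_j}$ with $j\ge i$, Lemma~\ref{ss-1} gives $a_j\le a_i$. Any other element is a mixed term $x_k^{b_k}\cdots x_n^{b_n}\in H$ with $i\le k<n$. Since $i\ge c$, we have $k\ge c$, so Lemma~\ref{ss-3} gives $\sum_{j\ge k}b_j\le a_k$, and Lemma~\ref{ss-1} gives $a_k\le a_i$. Hence every such term has degree at most $a_i$, with equality attained by $x_i^{a_i}$. This yields $\omega_i(\mathcal{I})=a_i$.

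Third, the regularity equality. Either combine with Proposition~\ref{omega-1}, or read it off from Case~2 of that proof: the new Pommaret generators of $\mathcal{I}+\langle x_1,\ldots,x_{i-1}\rangle$ have degree at most $a_i$. The surviving old generators of class at least $i$ have degree at most $a_i$ by the second step, and $x_i^{a_i}$ attains it. Theorem~\ref{regpom} then gives regularity $a_i$.

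The only delicate point is that Lemma~\ref{ss-3} needs $k\ge c$. This holds because $i\ge c$, and it is also what makes the terms of class at least $i$ exactly the relevant ones. I expect no further obstacle.
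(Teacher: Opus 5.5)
Your proposal is correct and follows essentially the same route as the paper. The regularity equality comes from Proposition~\ref{omega-1}, and the final claim in its Case~2 reduces $\omega_i(\mathcal{I})$ to the elements of $H$ lying in $\K[x_i,\ldots,x_n]$. Lemmas~\ref{ss-1} and~\ref{ss-3} then show that the maximal degree among these is exactly $a_i$, attained by $x_i^{a_i}$; your check that the mixed terms have class $k\ge i\ge c$, so that Lemma~\ref{ss-3} applies, spells out what the paper leaves implicit.
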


\begin{example}
  Consider the ideal
  $\mathcal{I} = \langle x_4^2,\, x_3^3,\, x_3^2x_4,\, x_1x_3^2,\,
  x_2x_3^2,\, x_1x_3x_4,\, x_2x_3x_4 \rangle$ of the polynomial ring
  $\mathbb{Q}[x_1, x_2, x_3, x_4]$.  Observe that the ideal
  \(\mathcal{I} + \langle x_1 \rangle\) is not quasi-stable, since no pure
  power of \(x_2\) appears in it. Now, we apply the
  aforementioned linear change of variables.  Since the ideal
  \(\mathcal{I}\) is strongly stable, we have \(c = 3\). Thus, the change
  of variables is given by
  $$x_1 \mapsto y_2,\quad x_2 \mapsto y_1,\quad
  x_3 \mapsto y_3,\quad x_4 \mapsto y_4.$$
  It follows that
  $\mathcal{I} + \langle x_1 \rangle = \phi(\mathcal{I}) + \langle y_2
  \rangle$, and the Pommaret basis of this ideal is
  $$\bigl\{\,y_4^2,\, y_3^3,\, y_3^2y_4,\, y_1y_3^2,\, y_1y_3y_4,\, y_2,\,
  y_2y_3,\, y_2y_4,\, y_2y_3^2,\, y_2y_3y_4\,\bigr\}.$$
  Clearly, the degree of this basis is \(3\). On the other hand, we have
  \(\omega_2(\mathcal{I}) = 3\), and therefore the desired equality holds.
\end{example}

Our next result provides a simple way to compute for any quasi-stable ideal
$\mathcal{I}$ the regularities of all the ideals
$\mathcal{I}+\langle x_1,\ldots,x_{i-1}\rangle$ using the Pommaret basis of
$\mathcal{I}$.

\begin{proposition}\label{omega-2}
  Let \(\mathcal{I} \subset \mathcal{R}\) be a quasi-stable monomial ideal
  and let \(H\) be its Pommaret basis. Then,
  $\mathrm{reg}\,\bigl(\mathcal{I} + \langle x_1, \ldots, x_{i-1}
  \rangle\bigr) = \max {\bigl\{ \deg{(h)} \mid h \in H,\ \cls{(h)} \geq i
    \bigr\}}$.
\end{proposition}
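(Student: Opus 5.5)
The plan is to proceed by induction on $i$, peeling off one variable at a time and using Proposition~\ref{reg12}(i) together with Theorem~\ref{regpom}.

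For $i=1$ there is nothing to prove: the claim is exactly Theorem~\ref{regpom}, since every $h\in H$ has class at least $1$.

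For the inductive step, I first treat a single section. Put $\mathcal{I}'=\mathcal{I}|_{x_1=0}\subset\K[x_2,\dots,x_n]$. Then $\mathcal{R}/\langle\mathcal{I},x_1\rangle\cong\K[x_2,\dots,x_n]/\mathcal{I}'$. By the cited fact \cite[Lemma~3.1]{zbMATH06126804} used in the proof of Proposition~\ref{reg12}, the set $H_{>1}|_{x_1=0}=H_{>1}$ is the Pommaret basis of $\mathcal{I}'$. Here we use that the ideal is monomial, so restricting to $x_1=0$ simply keeps the generators not divisible by $x_1$; these are exactly the generators of class $>1$. Consequently $\mathcal{I}'$ is again quasi-stable.

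Iterating this $i-1$ times, I get that
\[
\mathcal{I}^{(i)}:=\mathcal{I}|_{x_1=\cdots=x_{i-1}=0}\subset\K[x_i,\dots,x_n]
\]
is quasi-stable with Pommaret basis $\{h\in H\mid \cls(h)\ge i\}$. The class of an element is unchanged by this restriction, so at each stage the generators of class exactly $k$ are those dropped when setting $x_k=0$.

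Next I compare regularities. The ideal $\mathcal{I}+\langle x_1,\dots,x_{i-1}\rangle\subset\mathcal{R}$ is the extension of $\mathcal{I}^{(i)}$ plus a regular sequence of linear forms in new variables. More concretely, $\mathcal{R}/(\mathcal{I}+\langle x_1,\dots,x_{i-1}\rangle)\cong \K[x_i,\dots,x_n]/\mathcal{I}^{(i)}$ as graded modules, and regularity does not depend on the polynomial ring over which it is computed. So
\[
\mathrm{reg}\bigl(\mathcal{R}/(\mathcal{I}+\langle x_1,\dots,x_{i-1}\rangle)\bigr)=\mathrm{reg}(\mathcal{I}^{(i)})-1.
\]
Theorem~\ref{regpom} gives $\mathrm{reg}(\mathcal{I}^{(i)})=\max\{\deg h\mid h\in H,\ \cls(h)\ge i\}$.

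Finally, I pass from the quotient back to the ideal. Using $\mathrm{reg}(J)=\mathrm{reg}(\mathcal{R}/J)+1$ for a proper ideal $J$, we get $\mathrm{reg}(\mathcal{I}+\langle x_1,\dots,x_{i-1}\rangle)=\mathrm{reg}(\mathcal{I}^{(i)})$. This is also where a proper nonzero ideal is needed.

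The main obstacle I expect is the degenerate case in which $\{h\in H:\cls(h)\ge i\}$ contains no element of positive degree that survives. I would handle it by convention: either $\mathcal{I}^{(i)}=0$ (when $i>n$ or $i$ exceeds every class), or the maximum involves the linear generators $x_1,\dots,x_{i-1}$ of degree $1$. In the latter case one must check that the regularity is at least $1$, which agrees provided $H$ has an element of class $\ge i$. I would also double-check the claim that the Pommaret property is preserved under setting $x_1=0$, directly from the disjoint decomposition. Take a term of $\mathcal{I}'$; it lies in $\mathcal{I}$ and is not divisible by $x_1$. Its unique Pommaret divisor $h\in H$ then cannot have class $1$, because class $1$ forces $x_1\mid h$. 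The multiplier of $h$ is free of $x_1$, so the decomposition restricts to one indexed by $H_{>1}$.
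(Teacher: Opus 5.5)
Your proposal is correct and follows the paper's proof. You pass to $\I|_{x_1=\cdots=x_{i-1}=0}\subset\K[x_i,\dots,x_n]$ via the ring isomorphism, identify its Pommaret basis as $\{h\in H\mid \cls(h)\geq i\}$, and apply Theorem~\ref{regpom}; the only differences are that the paper cites \cite[Lemma~3.1]{zbMATH06126804} where you verify this directly and peel off one variable at a time, and that your concern about a degenerate case is unnecessary, since quasi-stability of a nonzero $\I$ forces a pure power of $x_n$ into $H$, so the set over which the maximum is taken is never empty for $i\leq n$.
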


\begin{proof}
  If we introduce for $i=1,\dots,n$ the ideals
  $\I^{(i-1)}=\I+\langle x_1,\ldots,x_{i-1}\rangle\subset\mathcal{R}$ and
  $\bar{\I}^{(i-1)}=\I|_{x_1=\cdots=x_{k-1}=0}\subset
  \mathcal{R}^{(i-1)}=\K[x_{i},\dots,x_{n}]$, then it is easy to see that
  we have the ring isomorphism
  $\mathcal{R}/\I^{(i-1)}\cong\mathcal{R}^{(i-1)}/\bar{\I}^{(i-1)}$.  One also
  easily verifies that
  $\bar{H}^{(i-1)}=\bigl\{h|_{x_1=\cdots=x_{k-1}=0}\mid h \in H,\ \cls{(h)}
  \geq i \bigr\}$ is a Pommaret basis of $\bar{\I}^{(i-1)}$ (see also
  \cite[Lemma~3.1]{zbMATH06126804}).  Now the claim follows immediately
  from Theorem~\ref{regpom}.
\end{proof}

\begin{example}
  Consider the ideal
  $\I=\langle x_4^2,x_3x_4,x_3^2,x_1x_4,x_2^2x_4\rangle\subset
  \mathbb{Q}[x_1,x_2,x_3,x_4]$.  It is easy to verify that this ideal is
  quasi-stable and that its Pommaret basis is
  $H=\{x_4^2,x_3x_4,x_3^2,x_1x_4,x_2^2x_4,x_1x_2x_4\}$.  We now want to
  compute the regularity of $\I^{(1)}=\I+\langle x_1\rangle$.  Following
  the proof of Proposition~\ref{omega-2}, we consider the ideal
  $\bar{\mathcal{I}}^{(1)} = \langle x_4^2,x_3x_4,x_3^2,x_2^2x_4\rangle
  \subset \mathbb{Q}[x_2,x_3,x_4]$.  From its Pommaret basis
  $\bar{H}^{(1)}=\{x_4^2,x_3x_4,x_3^2,x_2^2x_4\}$, we can immediately read
  off that
  $\mathrm{reg}(\bar{\mathcal{I}}^{(1)})=\mathrm{reg}(\I^{(1)})=3$.

  Alternatively, one could work with the coordinate transformation from the
  proof of Proposition~\ref{omega-1}.  Since there is no $j\in\mathbb{N}$
  such that $x_2^j\in \I$, the ideal $\I^{(1)}$ is not quasi-stable.
  Noting that here $D=2$ (as $H$ contains only power of $x_{3}$ and
  $x_{4}$), we apply the linear transformation $\varphi$ defined by
  $x_4\mapsto y_4,\ x_3\mapsto y_3,\ x_2\mapsto y_1,\ x_1\mapsto y_2$ and
  then consider the ideal $\varphi(\mathcal{I})+\langle y_2\rangle$.  This
  ideal is quasi-stable and from its Pommaret basis
  $H'=\{y_2,y_2y_3,y_2y_4,y_3^2,y_3y_4,y_4^2,y_1^2y_4\}$ we obtain
  $\mathrm{reg}(\varphi(\mathcal{I})+\langle y_2\rangle)=3$ confirming the
  result above.  Obviously, this approach is computationally more
  expensive, as it requires the computation of $H'$; using
  Proposition~\ref{omega-2} is much more direct.
\end{example}
  
%%% Local Variables:
%%% mode: latex
%%% TeX-master: "MainPaperFile"
%%% End:

%% file: B-Max.tex
\section{$\beta$-maximal and GIN position}
\label{sec:5}

This section provides a concise overview of the $\beta$-maximal position
introduced in \citep{HSS}. It reviews some key properties of this concept
and its application to the computation of generic annihilator numbers. In
addition, we present a novel deterministic algorithm for putting an ideal
into GIN position.

\begin{definition}
  Given a homogeneous polynomial ideal $\mathcal{I}\subset \mathcal{R}$, we
  refer to the finite set
  $B_q(\mathcal{I}) = \langle\lt{(\mathcal{I})}\rangle_q\cap\mathcal{T}$ as
  the \emph{$\K$-linear term basis} of $\langle\lt{(\mathcal{I})}\rangle_q$
  for any degree $q$ such that $\mathcal{I}_q\neq 0$. The
  \emph{$\beta$-numbers} of $\mathcal{I}$ are then given by the
  cardinalities
  $$\beta^{(i)}_q (\mathcal{I}) =\#\bigl\{t\in B_q(\mathcal{I})\ |\
  \mathrm{cls}(t) = i\bigr\}$$ and the \emph{$\beta$-vector} of
  $\mathcal{I}$ at degree $q$ is the ordered tuple
  $\beta_q(\mathcal{I})  = \big(\beta_q^{(n)}(\mathcal{I}),\ldots,
  \beta_q^{(1)}(\mathcal{I})\big) \in \mathbb{N}_0^n$.
  If $\mathcal{I}$ is in quasi-stable position and has the Pommaret basis
  $H$, we further define the \emph{$\beta$-numbers} of $H$ at degree $q$ by
  $\beta_q^{(k)}(H)=\#\{h\in H\mid \deg(h)=q,\,\cls(h)=k\}$ for any
  $1 \le k \le n$.
\end{definition}

As discussed in \cite[Remark~4.9]{HSS}, $\beta$-vectors are a useful tool
to study the asymptotic behaviour of Hilbert polynomials; in fact, knowing
the $\beta$-vector at a sufficiently high degree $q$ is equivalent to
knowing the Hilbert polynomial \cite[Prop.~8.2.6]{S}. Moreover, all
$\beta$-numbers of the ideal $\mathcal{I}$ can be computed recursively from
those of the Pommaret basis $H$ implying that the latter ones encode all
relevant information.

\begin{lemma}
  Let $\mathcal{I}\subset\mathcal{R}$ be a quasi-stable monomial ideal, and
  $H$ its Pommaret basis. Then we have for all degrees $q\geq1$ and all
  $1\leq k\leq n$ the equality
  $$\beta_q^{(k)}(\mathcal{I}) =
  \beta_q^{(k)}(H)+\sum_{j=k}^n\beta_{q-1}^{(j)}(\mathcal{I}).$$ 
\end{lemma}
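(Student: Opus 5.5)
The plan is to split the degree-$q$ part of $\mathcal{I}$ according to how its terms arise in the Pommaret direct sum decomposition $\mathcal{I}=\bigoplus_{h\in H}\K[{\rm M}_P(h)]\cdot h$. Since $\mathcal{I}$ is monomial, $B_q(\mathcal{I})$ is simply the set of terms of degree $q$ in $\mathcal{I}$. Because the decomposition is direct, every such term $t$ has a \emph{unique} Pommaret divisor $h\in H$, so $t=x^\mu h$ with $x^\mu\in\K[x_1,\dots,x_{\cls(h)}]$. Multiplying $h$ by variables of index at most $\cls(h)$ does not raise the class of the result above $\cls(h)$, and the last variable in $x^\mu$ fixes the class. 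Hence $\cls(t)=\min\{\cls(h),\cls(x^\mu)\}$, with $\cls(1)$ read as $+\infty$. In the decomposition, $t$ either equals $h$ itself, or $\mu\neq0$.

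For the terms of class $k$ in $B_q(\mathcal{I})$ this gives two cases. \textbf{(a)} $t\in H$, which contributes exactly $\beta_q^{(k)}(H)$. \textbf{(b)} $t=x^\mu h$ with $\mu\neq0$. In case (b), set $j=\cls(x^\mu)=\min\{i\mid\mu_i\neq0\}$; then $\cls(t)=j$, so $j=k$ and $x_k\mid t$. Write $t=x_k t'$ with $t'=(x^\mu/x_k)h$. I claim that $t\mapsto t'$ is a bijection from the case (b) terms of class $k$ onto the terms of degree $q-1$ in $\mathcal{I}$ of class $\geq k$.

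Checking the claim is the main step.

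\begin{itemize}
\item $t'$ lies in $\mathcal{I}$ because $h$ divides it.
\item $t'$ has class $\geq k$ because $x^\mu/x_k$ involves only variables $x_k,\dots,x_{\cls(h)}$ and $\cls(h)\geq k$.
\item The map is injective, since $t=x_kt'$.
\item For surjectivity, take a term $s\in\mathcal{I}_{q-1}$ with $\cls(s)\geq k$ and write $s=x^\nu h$ as its unique involutive representation, so $\cls(h)\geq\cls(s)\geq k$. Then $x_k$ is multiplicative for $h$, and $x_ks=(x_k x^\nu)h$ is an involutive representation, i.e.\ a case (b) term of class exactly $k$ with $\mu=\nu+e_k\neq0$. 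Uniqueness of the Pommaret divisor ensures that this $h$ is the one used to define $t'$, so $(x_ks)'=s$.
\end{itemize}

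Counting both cases and summing over classes $j=k,\dots,n$ in degree $q-1$ gives $\beta_q^{(k)}(\mathcal{I})=\beta_q^{(k)}(H)+\sum_{j=k}^n\beta_{q-1}^{(j)}(\mathcal{I})$.

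The only delicate point is making sure the class bookkeeping is consistent in surjectivity. An alternative route would be to argue directly via stability (dividing $t$ by $x_{\cls(t)}$ and asking whether the result stays in $\mathcal{I}$), but quasi-stability does not guarantee this. So I will rely entirely on the uniqueness of involutive divisors, which sidesteps the issue. For $q$ with $\mathcal{I}_{q-1}=0$ the sum is zero, and the formula reduces to the trivial statement that every degree-$q$ term of $\mathcal{I}$ then lies in $H$.
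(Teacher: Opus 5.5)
Your proof is correct and follows essentially the same route as the paper: the class-$k$ terms of degree $q$ split into those lying in $H$ and the rest, and the rest correspond bijectively to the degree-$(q-1)$ terms of $\mathcal{I}$ of class at least $k$ via $x^\beta\mapsto x_k x^\beta$. Your appeal to the uniqueness of involutive divisors makes explicit two facts the paper asserts without proof: a term outside $H$ is $x_k$ times a term of $\mathcal{I}$ of class $\geq k$, and $x_k x^\beta$ never lies in $H$.
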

\begin{proof}
  Consider a term $x^\alpha \in \mathcal{I}$ of degree $q$ with
  $\cls{(x^\alpha)} = k$, which contributes to
  $\beta_q^{(k)}(\mathcal{I})$. We distinguish two cases based on whether
  $x^\alpha$ belongs to $H$: If $x^\alpha \in H$, then $x^\alpha$
  contributes to $\beta_q^{(k)}(H)$. Otherwise, $x^\alpha \notin H$. Then
  $x^\alpha = x_k \cdot x^\beta$ for some $x^\beta \in \mathcal{I}$ of
  degree $q-1$ with $\cls{(x^\beta)} \geq k$.  The map
  $x^\beta \mapsto x_k \cdot x^\beta$ establishes a bijection between the
  terms in $\mathcal{I}$ of degree $q-1$ with class at least $k$ and the
  terms in $\mathcal{I} \setminus H$ of degree $q$ with class $k$. Thus,
  the number of terms in $\mathcal{I} \setminus H$ of degree $q$ with class
  $k$ equals $\sum_{j=k}^n \beta_{q-1}^{(j)}(\mathcal{I})$, yielding the
  desired equality.
\end{proof}

\begin{remark}\label{beta-alpha-1}
  Let $\mathcal{I}\subset\mathcal{R}$ be a homogeneous ideal and $H$ its
  Pommaret basis. Then, Theorem~\ref{alpha-pom.} states that
  $\alpha_{ij}(x_1,\ldots,x_n;\mathcal{R}/\mathcal{I})=\beta_{j+1}^{(i+1)}(H)$
  for all admissible values of the indices $i,j$.  Thus, if one displays
  the $\beta$-numbers and these annihilator numbers in tables, then the
  $\alpha$-table is up to an index shift simply the transpose of the
  $\beta$-table.  The annihilator numbers
  $\alpha_{ij}(x_1,\dots,x_n;\mathcal{R}/\mathcal{I})$ encode the degrees
  and classes of the generators in the Pommaret basis $H$ of $\mathcal{I}$.
  As these values can change under linear changes of variables, the
  $\beta$-numbers are generally not invariant.  This motivates the
  following definition of a generic position based on the $\beta$-vectors.
\end{remark}

\begin{definition}
  The ideal $\mathcal{I}$ is in \emph{$\beta$-maximal position}, if we have
  for any non-singular matrix $A\in K^{n\times n}$ and any degree $q$ with
  $\mathcal{I}_q\neq0$ the inequality
  \begin{equation}\label{bmaxlex}
    \beta_q(\mathcal{I})\succeq_{\mathrm{lex}}\beta_q(A\cdot\mathcal{I}).
  \end{equation}
\end{definition}

By \cite[Prop.~4.11]{HSS}, a homogeneous polynomial ideal
$\mathcal{I} \subset \mathcal{R}$ is in quasi-stable position, if and only
if the inequality \eqref{bmaxlex} holds for every non-singular matrix $A$
and all degrees $q \geq {\rm reg}\bigl(\lt(\mathcal{I})\bigr)$. Hence
$\beta$-maximal position implies quasi-stable position. A deterministic
test for \(\beta\)-maximal position was proposed in \cite{HSS}: one first
transforms the ideal into quasi-stable position (with a method developed in
that article) and then compares the $\beta$-vectors up to the degree
$\mathrm{reg}(\mathcal{I})$ with those of the generic initial ideal
(deterministically computed via Gr\"obner systems).  This approach is
extremely expensive and yields only a test, but not a method for achieving
$\beta$-maximal position, if the test fails.  We now present a much cheaper
method using solely linear algebra techniques to achieve either
$\beta$-maximal or GIN position assuming knowledge of the regularity
of~$\I$.  Starting from quasi-stable position is thus particularly useful,
since the regularity can then be read off directly from a Pommaret basis by
Theorem~\ref{regpom}.

\begin{proposition}[{\cite[Prop.~4.13]{HSS}}]\label{prop:betagin}
  If the ideal $\I$ is in GIN position, then it is also in $\beta$-maximal
  position.  In particular, if $\I$  is  in $\beta$-maximal
  position, then $\beta_{q}(\I)=\beta_{q}\bigl(\operatorname{Gin}(\mathcal
  I)\bigr)$ for all degrees~$q$ with $\I_{q}\neq0$.
\end{proposition}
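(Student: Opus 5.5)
The first claim (GIN position implies $\beta$-maximal position) comes down to upper semicontinuity of the $\beta$-vector under linear changes of coordinates. The second claim then follows from a comparison argument. The plan has three steps.

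\textbf{Step 1: a Zariski-open set realising the $\beta$-vector of the Gin.} Fix a degree $q$ with $\I_q\neq0$ and put $N=\dim_\K\I_q$. Choose a $\K$-basis $f_1,\dots,f_N$ of $\I_q$ and write the coefficients of $A\cdot f_1,\dots,A\cdot f_N$ as the rows of a matrix $M(A)$ whose entries are polynomials in the entries of $A$. Order the columns of $M(A)$ by the terms of degree $q$, decreasing with respect to $\prec_{drl}$. Then $B_q(A\cdot\I)$ is exactly the set of pivot columns of the row echelon form of $M(A)$. For the generic matrices of Theorem~\ref{thm:gin-existence}, this pivot set is the term basis of $\operatorname{Gin}(\I)_q$. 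The natural characterisation is that this pivot set is the lexicographically largest one that occurs for any $A$: it is read off greedily from non-vanishing of leading minors, and these minors are nonzero on a nonempty Zariski-open set.

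\textbf{Step 2: from pivot sets to $\beta$-vectors.} We need $\beta_q(A\cdot\I)\preceq_{\mathrm{lex}}\beta_q(\operatorname{Gin}(\I))$ for every non-singular $A$. First I would try a direct count. The quantity $\beta_q^{(n)}+\dots+\beta_q^{(k)}$ counts the terms of $B_q(A\cdot\I)$ of class at least $k$, which equals $\dim_\K\bigl((A\cdot\I)_q+\langle x_1,\dots,x_{k-1}\rangle_q\bigr)-\dim_\K\langle x_1,\dots,x_{k-1}\rangle_q$. This holds because for $\prec_{drl}$ the terms divisible by some $x_i$ with $i<k$ are exactly the smallest ones. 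Each such dimension is the rank of a polynomial matrix in $A$, so it is lower semicontinuous in $A$ and attains its maximum on a nonempty open set. On the open set where all of these ranks for $k=n,n-1,\dots,1$ are simultaneously maximal, each partial sum is maximal. This gives componentwise dominance of the partial sums, and in particular lexicographic dominance. Intersecting this open set with the GIN open set is still nonempty, so $\beta_q(\operatorname{Gin}(\I))$ dominates every $\beta_q(A\cdot\I)$.

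\textbf{Step 3: conclusions.} If $\I$ is in GIN position, then $\beta_q(\I)=\beta_q(\operatorname{Gin}(\I))$, which by Step~2 dominates every $\beta_q(A\cdot\I)$. So $\I$ is in $\beta$-maximal position. Conversely, suppose $\I$ is in $\beta$-maximal position. Take $A$ in the GIN open set; then $\beta_q(\I)\succeq_{\mathrm{lex}}\beta_q(A\cdot\I)=\beta_q(\operatorname{Gin}(\I))$. Step~2 applied to the identity matrix gives the reverse inequality. Antisymmetry of $\preceq_{\mathrm{lex}}$ then yields equality for every $q$ with $\I_q\neq0$.

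The main obstacle is Step~2: checking carefully that the class-filtered counts really are ranks, via the reverse lexicographic property that $\lt(f)\in\langle x_1,\dots,x_{k-1}\rangle$ iff $f\in\langle x_1,\dots,x_{k-1}\rangle$. Once that identity is in place, semicontinuity finishes the argument.
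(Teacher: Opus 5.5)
The paper gives no proof of this statement; it quotes it from \cite[Prop.~4.13]{HSS}, so there is no in-paper argument to compare against. Your proof is correct and self-contained.

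The load-bearing part is Step~2. With the paper's convention $x_1\prec_{drl}\cdots\prec_{drl}x_n$, every term of class $\ge k$ is larger than every term of class $<k$ of the same degree. Hence a homogeneous $f$ lies in $P=\langle x_1,\dots,x_{k-1}\rangle$ if and only if $\lt(f)$ does. In an echelon basis of $(A\cdot\I)_q$, the elements whose leading term has class $<k$ therefore lie in $P_q$, and the others stay independent modulo $P_q$. So $\sum_{i\ge k}\beta^{(i)}_q(A\cdot\I)$ is the rank of the block of columns of $M(A)$ labelled by terms of class $\ge k$. Intersect the finitely many maximal-rank loci with the GIN open set and with $\det A\neq 0$; the result is nonempty because $\K$ is infinite. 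This shows that $\operatorname{Gin}(\I)$ maximises every tail sum, and dominance of all tail sums implies $\succeq_{\mathrm{lex}}$ on the $\beta$-vectors. Step~3, including the antisymmetry argument for the second claim, is then fine.

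There are two remarks. First, Step~1 is never used, and its claim that the Gin pivot set is the ``lexicographically largest'' one is not proved. If it is read as lexicographic comparison of the decreasingly sorted term lists (the usual exterior-power characterisation of $\operatorname{Gin}$), it would not suffice on its own. For $n=3$ and $q=3$, the sets $T=\{x_2x_3^2,\,x_1x_3^2\}$ and $T'=\{x_2^2x_3,\,x_2^3\}$ satisfy $T\succ T'$ in that sense, since $x_2x_3^2\succ_{drl}x_2^2x_3$. Yet $\beta(T)=(0,1,1)\prec_{\mathrm{lex}}(0,2,0)=\beta(T')$. What you need is prefix-wise dominance, and Step~2 proves exactly the instances of it that matter for the classes, so you can simply drop Step~1.

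Second, your route gives more than the statement. Since $\sum_{i\ge k}\beta_q^{(i)}(\I)=\dim_\K\K[x_k,\dots,x_n]_q-\dim_\K\bigl(\mathcal{R}/(\I+\langle x_1,\dots,x_{k-1}\rangle)\bigr)_q$, your argument together with the second claim shows the following. $\beta$-maximal position is equivalent to every section $\mathcal{R}/(\I+\langle x_1,\dots,x_{k-1}\rangle)$ having the generic (minimal) Hilbert function in every degree. This is a useful reformulation of $\beta$-maximality in terms of generic linear sections.
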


\begin{remark}\label{genbmax}
  As a trivial consequence of this result, we note that the annihilator
  numbers $\alpha_{ij}(x_{1},\dots,x_{n};\mathcal{R}/\I)$ are the generic
  ones, if and only if the ideal $\I$ is in $\beta$-maximal position.
\end{remark}

Following \cite{bermejo}, to obtain $\operatorname{Gin}(\mathcal I)$, it
suffices to consider a linear change of coordinates whose associated matrix
$A=(a_{ij})$ is upper unitriangular; that is,
\begin{equation}\label{eq:generic-triangular-change}
  A\cdot x_i
  =x_i + a_{i,i+1} x_{i+1} + \cdots + a_{i,n} x_n,
  \qquad 1\leq i\leq n.
\end{equation}
It is further proved there that a non-empty Zariski open subset
$\mathcal V\subset \K^{\binom n2}$ exists such that
$\lt{\bigl(A\cdot \mathcal I\bigr)} =\operatorname{Gin}(\mathcal I)$ for
every $A \in \mathcal V$. Thus, a generic choice of $A$ yields the generic
initial ideal. A random choice, however, does not by itself certify that
$A$ belongs to the required open set.

Algorithm~\ref{alg:bmax} determines polynomial equations defining the
complement of $\mathcal V$.  It then suffices to choose coefficients which
do not satisfy these equations.  The algorithm applies a generic, upper
unitriangular, linear transformation of the variables leading to parametric
polynomials in $\bigl(\K[\mathbf{a}]\bigr)[x_1,\dots,x_n]$ where we
abbreviate $\K[\mathbf{a}]=\K[a_{1,2},\dots,a_{n-1,n}]$.  It then sets up
parametric $\mathcal{K}$-linear generating sets for the homogeneous
components $\mathcal{I}_q$ starting with the minimal degree~$q$ such that
$\mathcal{I}_{q}\neq0$ and going up to degree
$r=\mathrm{reg}(\mathcal{I})$.  From each of these generating sets, a
coefficient matrix $M_q$ is extracted.  Thus the columns of the matrix
$M_q$ are labelled by the terms of degree~$q$ sorted descendingly
by~$\prec_{drl}$.  Then, starting from the left, the algorithm eliminates
any column which does not increase the rank and thus produces in the end a
reduced matrix $\widehat{M}_{q}$ of full column rank consisting of columns
which have been in $M_q$ as far left as possible.  Finally, it computes a
list $C_q$ of all non-zero maximal minors of the reduced
matrix~$\widehat{M}_{q}$.

\begin{algorithm}[h]
\caption{Determine conditions characterising GIN position}
\label{alg:bmax}
\begin{algorithmic}[1]
\Require homogeneous generating set $G$ of the ideal
$\mathcal{I}\subset\mathcal{R}$, regularity $r=\mathrm{reg}(\I)$
\Ensure list of lists of polynomials in the parameters $a_{ij}$
with $1\leq i<j\leq n$ such that triangular linear transformations to GIN position are
determined by those parameter values for which in each list at least one
polynomial does not vanish
\State Transform $G$ by a generic triangular linear change of
variables $$x_i\mapsto x_i + a_{i,i+1} x_{i+1} + \cdots + a_{i,n} x_n,\quad
1\leq i\leq n,$$ with undetermined parameters $a_{ij}$ into a new set
$\tilde{G}$ of homogeneous polynomials with coefficients in the polynomial
ring $\mathcal{K}[\mathbf{a}]$.
\ForAll{$q_{1}=\min_{g\in G}{\deg{(g)}}\leq q\leq r$}
\State Construct all products $t\cdot\tilde{g}$ with
$\tilde{g}\in\tilde{G}$ and $t$ a term such that
$\deg{(t\cdot\tilde{g})}=q$. 
\State Order each product $t\cdot\tilde{g}$ by $\prec_{drl}$ and write its
coefficients into one row of the matrix $M_q$ over
$\mathcal{K}[\mathbf{a}]$. 
\State Initialise $\widehat{M}_{q}$ as empty matrix.
\For{$i=1,\dots,\#(\text{columns of }M_q)$}
\State Check whether or not attaching the $i$th column of $M_q$ to
$\widehat{M}_{q}$ increases the (column) rank of $\widehat{M}_{q}$ over
$\mathcal{K}(\mathbf{a})$. 
\If{yes}
\State Attach the column to $\widehat{M}_{q}$.
\EndIf
\EndFor
\State Compute list $C_q$ of all non-zero maximal minors of
$\widehat{M}_{q}$. 
\EndFor
\Return $\bigl[ C_{q_1},\dots,C_{r}\bigr]$
\end{algorithmic}  
\end{algorithm}

\begin{proposition}\label{ginpos}
  Any specialisation of the generic linear transformation where the
  parameters $a_{ij}$ are chosen such that in each list $C_q$ computed by
  Algorithm~\ref{alg:bmax} at least one maximal minor is not zero leads to
  an ideal in GIN position. This condition is satisfied for almost all
  specialisations.
\end{proposition}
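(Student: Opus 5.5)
The plan is to show two things. First, for each degree $q$ with $q_1\le q\le r$, the leading terms of $(A\cdot\I)_q$ are determined by which columns of $M_q$ are pivot columns after specialisation. Second, controlling the degrees up to $r=\mathrm{reg}(\I)$ suffices to control the whole leading ideal.

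\textbf{Step 1: leading terms from pivots.} Fix a specialisation $\mathbf a\mapsto \mathbf a^0$ and write $A$ for the resulting unitriangular matrix. The rows of $M_q(\mathbf a^0)$ span $(A\cdot\I)_q$, and the columns are ordered by $\prec_{drl}$ descendingly. Hence the terms of $\lt{(A\cdot \I)}_q$ are exactly the labels of the columns selected by the greedy left-to-right rank procedure applied to $M_q(\mathbf a^0)$. This is ordinary Gaussian elimination: a term $t$ is a leading term if and only if its column is not in the span of the columns further left, after restricting to the row space.

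The same procedure over $\K(\mathbf a)$ produces a set $S_q$ of column labels. I claim that $S_q$ is the lexicographically (with respect to $\prec_{drl}$) largest set of terms attainable under any specialisation. The reason is that each rank condition "column $i$ is dependent on earlier columns" is a Zariski-closed condition that holds generically exactly when it holds over $\K(\mathbf a)$, and a rank can only drop under specialisation.

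Now suppose some maximal minor in $C_q$ is nonzero at $\mathbf a^0$. Then $\widehat M_q(\mathbf a^0)$ still has full column rank $\#S_q$. Since $\#S_q$ equals $\dim_\K \I_q$, which is the generic rank and also the rank at $\mathbf a^0$, the specialised greedy selection is forced to pick exactly the columns in $S_q$. This needs a short inductive argument. If a column left of the $k$-th element of $S_q$ became independent, or a column of $S_q$ became dependent, then the leftmost block of columns would have a different rank. Any column outside $S_q$ is dependent on earlier $S_q$ columns over $\K(\mathbf a)$; clearing denominators, this dependency survives specialisation as long as the $S_q$ columns stay independent. I expect this bookkeeping to be the main obstacle, because the dependency relations have denominators that could vanish at $\mathbf a^0$. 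I would handle it by using the full-rank minor condition together with the rank count, rather than the individual relations. Thus $\lt{(A\cdot\I)}_q=\langle S_q\rangle$ for $q\le r$.

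\textbf{Step 2: identify $\langle S_q\rangle$ with the generic initial ideal.} By Theorem~\ref{thm:gin-existence} and the result of \cite{bermejo}, there is a nonempty open set $\mathcal V$ of unitriangular $A$ on which $\lt{(A\cdot\I)}=\operatorname{Gin}(\I)$. The set where all the minor lists are nonvanishing is also a nonempty open set. It is nonempty because each $C_q$ contains a nonzero polynomial, namely a maximal minor of a full-rank matrix over $\K(\mathbf a)$, and $\K$ is infinite. Intersecting the two open sets shows that $\langle S_q\rangle=\operatorname{Gin}(\I)_q$ for all $q\le r$. Consequently every admissible $A$ satisfies $\lt{(A\cdot\I)}_q=\operatorname{Gin}(\I)_q$ for $q\le r$.

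\textbf{Step 3: pass to all degrees.} The ideal $\operatorname{Gin}(\I)$ is generated in degrees at most $\mathrm{reg}(\I)=r$, as recalled in Section~\ref{sec:2}. Therefore $\operatorname{Gin}(\I)\subseteq\lt{(A\cdot\I)}$. Both ideals have the same Hilbert function as $\I$, since passing to the leading ideal and applying a linear change of coordinates both preserve Hilbert functions. Hence the two ideals are equal, and $A\cdot\I$ is in GIN position.

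\textbf{Genericity.} The set of admissible specialisations is the complement of a finite union of proper hypersurfaces. Each is the zero set of the product of the elements of a list $C_q$, and this product is a nonzero polynomial. The admissible set is therefore a nonempty Zariski-open set, which proves the "almost all" claim.
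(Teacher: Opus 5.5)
Your proof is correct and follows the paper's argument, with the details made explicit: the greedy column selection $S_q$ over $\K(\mathbf a)$ gives $\operatorname{Gin}(\I)_q$, a non-vanishing maximal minor of $\widehat M_q$ forces the specialised pivot columns to be $S_q$, and degrees up to $r$ suffice because $\operatorname{Gin}(\I)$ is generated there and the Hilbert functions agree. The bookkeeping you worry about in Step~1 is immediate, since for every prefix of columns the specialised rank is at most the generic rank and at least the number of $S_q$-columns in that prefix, and these two numbers coincide. One small slip in your last paragraph: the excluded set in degree $q$ is the common zero locus $V(C_q)$ of all the minors, not the zero set of their product, so the admissible set only contains the complement of that hypersurface, which still gives the nonempty Zariski-open conclusion.
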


\begin{proof}
  The GIN position (for $\prec_{drl}$) is characterised by the property
  that in each degree~$q$ with $\I_q\neq0$ the terms in $\lt{(\I)}_q$ are
  as large as possible for $\prec_{drl}$.  As the generic initial ideal is
  minimally generated in degrees up to the regularity~$r$, it suffices to
  verify this property only for these finitely many degrees.  The loop in
  the lines 8--11 determines for each such $q$ which terms appear in
  $\operatorname{Gin}(\mathcal I)_q$.  By construction, the same terms will
  be the leading terms of degree $q$ of the transformed ideal, if and only
  if at least one maximal minor in the list $C_q$ is non zero.
\end{proof}

If one wants a $\beta$-maximal position which is \emph{not} the GIN
position (which need not exist), then it is easier to work with Gaussian
elimination than with maximal minors.  The columns of the matrices $M_q$
can naturally be combined into groups according to the classes of the terms
labelling them.  Applying a (fraction-free) Gaussian elimination to $M_q$
using only row operations yields a row echelon form where the pivots sit in
those columns which Algorithm~\ref{alg:bmax} chooses in the construction of
the reduced matrix~$\widehat{M}_{q}$ and choosing a specialisation
satisfying the condition in Proposition~\ref{ginpos} will keep all these
pivots non zero.  For $\beta$-maximal position, pivots might be further to
the right, as long as they are still in a column with the same class.
Indeed, by Proposition~\ref{prop:betagin}, if an ideal~$\mathcal I$ is in
$\beta$-maximal position, then one has
\( \beta_q(\mathcal I)\;=\;\beta_q\bigl(\operatorname{Gin}(\mathcal
I)\bigr) \) for every degree $q\ge 0$ with $\mathcal I_q\neq 0$.  Thus, if
a specialisation can be chosen so that in every relevant degree the pivot
columns may move while preserving their classes, but at least one pivot
differs from the generic one, then the resulting ideal is in
\(\beta\)-maximal position but not in GIN position.

% The conditions computed by Algorithm~\ref{alg:bmax} completely characterise
% all specialisations leading to GIN position as a Zariski open set.  The
% prize is that for larger values of $q$, typically the number of maximal
% minors which have to be considered explodes (and sometimes most of them are
% irrelevant, if just one constant maximal minor exists).  By contrast, using
% Gaussian elimination is much cheaper and also yields a Zariski open set
% describing specialisations which all lead to GIN position.  However,
% generally there exist further specialisations also leading to GIN position
% and at least in low degrees the conditions derived via maximal minors are
% often simpler and weaker.

\begin{example}
We consider the ideal
\[
\mathcal{I}=\left\langle
x_3^3,\;
x_2x_3^2+x_1^2x_2,\;
x_2^3x_3,\;
x_2^4,\;
x_1x_2^2x_3,\;
x_1^2x_3^2,\;
x_1^4x_3
\right\rangle
\subset \QQ(\sqrt{3})[x_1,x_2,x_3].
\]
It will later turn out that it is already in $\beta$-maximal position, but
this cannot be easily verified.  By contrast, it is straightforward to
check that the ideal is in quasi-stable position (in fact even in stable
position), as $\lt{(\I)}$ is minimally generated by
{\setlength{\arraycolsep}{2pt}
\[
\lt{(\I)}=\langle x_3^3,\ x_2x_3^2,\ x_2^3x_3,\ x_2^4,\ x_1^2x_3^2,\
x_1x_2^2x_3,\ x_1^2x_2x_3,\ x_1^2x_2^3,\ x_1^3x_2^2,\ x_1^4x_2,\ x_1^4x_3 
\,\rangle
\]}%
and this minimal basis is also the Pommaret basis of $\lt{(\I)}$.  Thus the
first step of Algorithm~\ref{alg:bmax} is not necessary.  We also see that
here $q_1=3$ and $q_2=5$.  We apply now the generic transformation
$x_1\mapsto x_1+Ax_2+Bx_3$, $x_2\mapsto x_2+Cx_3$, $x_3\mapsto x_3$.

Since $M_4$ and $M_5$ are of size $10\times 15$ and $22\times 21$,
respectively, we explicitly write down only the matrix
\[\setlength{\arraycolsep}{2pt}
M_3=\begin{pmatrix}
1&0&0&0&0&0&0&0&0&0\\
C(1+B^2)&1+B^2+2ABC&A^2C+2AB&A^2&2BC&
2(AC+B)&2A&C&1&0
\end{pmatrix}.
\]
The reduction $\widehat{M}_3$ is simply given by the first two columns and
the only maximal minor yields the condition $1+B^2+2ABC\neq0$.  Obviously,
a Gaussian elimination simply eliminates the first entry of the second row
and leads to the same condition as pivot.

A (\textsc{Maple}) computation shows that the matrix $M_4$ also has full
row rank and that its reduction $\widehat{M}_4$ consists of its first 10
columns.  Thus we get again only one maximal minor leading to the condition
\[
-2A^2\bigl(6A^3C^3-12A^2BC^2+9AB^2C+3AC-3B^3-B\bigr)\neq0.
\]
A Gaussian elimination would lead here to 10 non-trivial pivots.  Their
form depends on the precise steps done during the elimination.  In our
\textsc{Maple} computation, we obtained for example the condition $C\neq0$,
whereas the above minor shows that the specialisation $A=B=1$ and $C=0$
satisfies all conditions in degrees 3 and 4.

In degree 5, it turns out that the rank of $M_5$ is 20 and thus in
principle one has to compute $\tbinom{22}{20}=231$ maximal minors as
determinants of $20\times20$-matrices.  However, it turns out that by
dropping the last column and the rows\footnote{Of course, the order in
  which the rows are written is arbitrary and the given indices just refer
  to the one specific choice we did in our computations.} 18 and 22 one
obtains a constant maximal minor.  Thus, this last degree imposes no
additional conditions and the specialisation $A=B=1$, $C=0$ yields GIN
position.  By contrast, a Gaussian elimination produced 20 complicated
polynomial conditions (many of which factor).

A comparison of the $\beta$-vectors now shows that the original variables,
corresponding to the specialisation $A=B=C=0$, are $\beta$-maximal.  In
degree~3, it is easy to see that even the condition for GIN position is
satisfied. In degree~4, obviously the minor vanishes, but by using the 11th
column instead of the 9th one obtains a new non-vanishing minor.  As this
change concerns only leading terms of class~1, the ideal is still in
$\beta$-maximal position, but the leading ideal is no longer the generic
initial ideal.

For constructing further $\beta$-maximal positions not leading to the
generic initial ideal, we note that the columns 2--4 of the matrix $M_3$
correspond to terms of class~2. For avoiding GIN position, we must choose
the parameters such that $1+B^2+2ABC=0$. This happens for example for
$A=B=-C=1$ and for this choice, the next entry in the second row of $M_3$
remains non-zero. Thus in degree~3 the leading ideal contains the term
$x_2^2x_3$ instead of $x_2x_3^2$ as $\operatorname{Gin}(\mathcal I)$.  In
degree~4, a check reveals that the same leading terms occur as in
$\operatorname{Gin}(\mathcal I)$.  For the choice $A=1$, $B=1/\sqrt{3}$,
$C=-2/\sqrt{3}$, the second and the third entry in the second row of $M_3$
vanish, whereas the fourth entry remains non-zero so that the leading ideal
contains now the term $x_2^3$. Again a check shows that in degree~4,
nothing happens. A complete examination of the corresponding pivot conditions shows that these are the only four \(\beta\)-maximal leading ideals arising in this example.
\end{example}

The complexity of Algorithm~\ref{alg:bmax} explodes rapidly when the
regularity $r$ grows, because of the large number of maximal minors that
exist in higher degrees.  The following result shows that the approach via
Gaussian elimination -- where only one matrix in each degree is studied --
has a worst complexity given essentially by $O(n^{4r})$; thus, it is
polynomial in $n$ and exponential in $r$.\footnote{One could also express
  the complexity as $O(r^{4n+2})$ reverting the roles of $n$ and $r$, but
  for typical ideals in algebraic geometry, it makes more sense to take $r$
  as the exponent.}  Nevertheless, the bound shows that deterministically putting an ideal into GIN position remains computationally expensive. 
\begin{proposition}
  Let $\mathcal I\subset\mathcal R=\K[x_1,\ldots,x_n]$ be a homogeneous
ideal given by a minimal homogeneous generating set $G$ and $r=\operatorname{reg}(\mathcal I)$. Then a
  non-empty Zariski-open set of parameter values $a_{ij}$ for which the
  corresponding unitriangular linear coordinate change puts $\mathcal I$ in
  GIN position can be determined using
  \[
    O\left(
      r^2\binom{n+r-1}{r}^{\,4}
    \right)\subseteq O\bigl(r^{2}n^{4r}\bigr)
  \]
  arithmetic operations in the rational function field $F=\K(\mathbf{a})$.
\end{proposition}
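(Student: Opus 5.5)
The plan is to count the cost of the Gaussian-elimination variant of Algorithm~\ref{alg:bmax} degree by degree. I then sum over the at most $r$ degrees $q_1\le q\le r$. For each such $q$, the matrix $M_q$ has one column per term of degree $q$, so it has $N_q=\binom{n+q-1}{q}\le\binom{n+r-1}{r}=:N$ columns. Its rows are the products $t\cdot\tilde g$ with $\deg(t\tilde g)=q$.

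\textbf{Step 1: bound the rows.} The row count is a priori $\sum_{g\in G}\binom{n+q-\deg g-1}{q-\deg g}$, which can exceed $N$ by the factor $\#G$. Since $G$ is minimal, $\#G$ is bounded by the number of minimal generators of $\operatorname{Gin}(\I)$ in degrees $\le r$, hence by $rN$. A cruder bound would do, but I would instead first replace the rows of $M_q$ by a row-reduced basis of $\I_q$. By construction there are at most $N$ such rows. They are obtained from those of $M_{q-1}$ multiplied by the $n$ variables together with the rows coming from new generators of degree $q$. Thus each $M_q$ may be taken of size $O(N)\times N$, after an elimination on a matrix of size at most $(nN+\#G_q)\times N$.

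\textbf{Step 2: cost of one elimination and of setting up the matrix.} Gaussian elimination on an $m\times N$ matrix over $F=\K(\mathbf a)$ costs $O(mN^2)$ field operations, and this yields the pivot columns. These coincide with the columns chosen for $\widehat M_q$ and hence, by Proposition~\ref{ginpos}, with $\operatorname{Gin}(\I)_q$. The accompanying pivot entries are non-zero elements of $F$ whose numerators give the open conditions. Setting up the transformed generators $\tilde G$ requires substituting a unitriangular linear change into polynomials of degree $\le r$. Expanding $\prod(x_i+\sum_j a_{ij}x_j)^{\alpha_i}$ term by term costs at most $O(N^2)$ operations per term, so $O(N^3)$ per generator, and this is absorbed by the final bound.

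\textbf{Step 3: summation.} The cost of the largest term is where I expect the real work, and likely the source of the fourth power. Computing the reduced matrix per degree and additionally forming the maximal-minor or pivot conditions, with a generously bounded number of rows $m\le N^2$ (products of up to $N$ row vectors by $n\le N$ variables before reduction), gives $O(mN^2)=O(N^4)$ per degree. Summing over $r$ degrees, each contributing possibly $r$ sub-blocks, one per generator degree, yields $O(r^2N^4)$.

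The main obstacle is justifying the row count in Step 1 honestly without assuming an incremental reduction the algorithm does not literally perform. My fallback is to bound $\#\{t\tilde g\}\le \#G\cdot N\le rN\cdot N$ and use $O(mN^2)$ with $m\le rN^2$. This gives $O(rN^4)$ per degree and $O(r^2N^4)$ overall, as claimed. Finally, $\binom{n+r-1}{r}\le n^r$ gives the inclusion $O(r^2n^{4r})$.
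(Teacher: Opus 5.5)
Your fallback argument is correct and is essentially the paper's proof: the paper likewise bounds the number of rows of $M_q$ by $b_q\le\sum_{d\le q}\#G_d\,s_{q-d}\le q\,N^2$ (with $s_d=\dim_{\K}\mathcal{R}_d$ and $N=\binom{n+r-1}{r}$), charges $O(b_q s_q d_q)\subseteq O(b_qN^2)$ for an incremental Gaussian elimination in degree $q$, and sums over the at most $r$ degrees. One simplification: $\#G_d\le\dim_{\K}(\I_d)\le s_d$, together with $\deg g\le r$ for minimal generators, already gives $\#G\le rN$, so you need neither the comparison with $\operatorname{Gin}(\I)$, nor the row reduction of Step~1, nor the ``sub-block'' count of Step~3.
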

\begin{proof}
 For every degree $1\leq q\leq r$, set
  \[
    s_q=\dim_\K(\mathcal R_q)=\binom{n+q-1}{q},
    \qquad
    d_q=\dim_\K(\mathcal I_q),
  \]
  and let $b_q$ denote the number of rows of the matrix $M_q$ appearing in
  Algorithm~\ref{alg:bmax}. Thus,
  \[
    b_q=
    \sum_{\substack{g\in G\\ \deg(g)\leq q}}
    \binom{n+q-\deg(g)-1}{q-\deg(g)}.
  \]
  Consequently, $M_q$ is a $b_q\times s_q$ matrix of rank $d_q$ over
  $F$. We follow essentially Algorithm~\ref{alg:bmax}. However, instead of
  first constructing $\widehat M_q$ and subsequently computing all its
  maximal minors, we perform Gaussian elimination incrementally. More
  precisely, let $E_q$ be an auxiliary matrix, initially empty, which is
  maintained in column-echelon form. The columns of $M_q$ are considered
  from left to right. For each column $v$, we reduce $v$ with respect to
  the pivot columns already stored in $E_q$. If the reduced column is zero,
  then $v$ is discarded. Otherwise, its first non-zero entry determines a
  new pivot, the reduced column is appended to $E_q$, and the corresponding
  original column of $M_q$ is retained.

This way, $E_q$ contains both the pivot positions and the pivot expressions
arising during Gaussian elimination. Since the columns are processed in
their given order, these pivots determine generic column-rank conditions
for $M_q$.  After clearing the denominators of the pivots, their numerators
yield the required polynomial conditions in $\K[\mathbf{a}]$ and there is
no need to compute the complete collection of maximal minors of
$\widehat M_q$.

At most $d_q$ pivot columns are selected. Reducing one column of $M_q$ with
respect to these requires $O(b_qd_q)$ arithmetic operations in $F$. Since
$M_q$ has $s_q$~columns, the incremental Gaussian elimination in degree $q$
requires $O(b_qs_qd_q)$ arithmetic operations. Therefore, the total
complexity over all degrees up to the regularity is
\[
O\left(
\sum_{q=1}^{r}b_qs_qd_q
\right).
\]

We now bound this expression in terms of $n$ and $r$. Put $s=s_r$. For
every $q\leq r$, we have $s_q\leq s$ and $d_q\leq s_q\leq s$. For every $d$, let $G_d$ denote the set of elements of $G$ of degree
$d$. Since $G$ is a minimal homogeneous generating set, its elements of
degree $d$ are linearly independent over $\K$, and hence
\[
\#G_d\leq \dim_\K(\mathcal I_d)\leq
s_d=\binom{n+d-1}{d}.
\]
The number of rows of $M_q$ is therefore
\[
b_q=
\sum_{d=1}^{q}
\#G_d\binom{n+q-d-1}{q-d}
\leq
\sum_{d=1}^{q}s_ds_{q-d}
\leq q\,s_q^2\leq q\,s^2.
\]
Consequently,
$$\sum_{q=1}^{r}b_qs_qd_q\leq
\sum_{q=1}^{r}q\,s^2\cdot s\cdot s=
s^4\sum_{q=1}^{r}q=
\frac{r(r+1)}{2}s^4.$$
Thus, the claimed complexity follows.
\end{proof}

\begin{remark}
There is a distinction between the conditions obtained by
Algorithm~\ref{alg:bmax} and those produced in the proof of the
proposition.  For a fixed degree $q$, let $C_q$ denote the set of all
non-zero maximal minors of $\widehat M_q$.  Algorithm~\ref{alg:bmax}
describes the complete maximal-rank locus
\[
U_q
=
\K^N\setminus V(C_q),
\]
where $N=n(n-1)/2$.  Indeed, a specialization preserves the required
rank of $\widehat M_q$ if and only if at least one of the maximal
minors does not vanish.

By contrast, the incremental Gaussian elimination used in the proof
selects a particular sequence of pivots.  If
$p_{q,1},\ldots,p_{q,d_q}$ denote the corresponding pivot conditions
after clearing denominators, then it yields the open set
\[
U'_q
=
\K^N\setminus V\left(\prod_{j=1}^{d_q}p_{q,j}\right).
\]
In general, $U'_q\subseteq U_q$ and the inclusion may be strict.  In fact,
a pivot selected during Gaussian elimination may vanish at some
specialisation leading to GIN position, as another maximal minor of
$\widehat M_q$ remains non-zero.  Consequently, Algorithm~\ref{alg:bmax}
provides the full description of the Zariski-open set of admissible
specialisations, whereas the procedure in the proposition computes, at
lower cost, a possibly proper non-empty Zariski-open subset of it.
\end{remark}

%%% Local Variables:
%%% mode: latex
%%% TeX-master: "MainPaperFile"
%%% End:

%% file: Sreg.tex
\section{Sectional regularities and axial constants}
\label{sec:6}

In this section, we first recall the definitions of two recently introduced
families of invariants associated with a homogeneous ideal, namely the
sectional regularities and the axial constants. Typically, these invariants
are defined and computed using the generic initial ideal of a given
ideal. However, we will demonstrate that the sectional regularities of an
ideal can be easily obtained using a Pommaret basis, provided that the
ideal is in $\beta$-maximal position. Furthermore, we prove that when the
ideal is simultaneously in \(\beta\)-maximal and strongly stable position,
the sectional regularities and the axial constants coincide. Note that
again the first part of this section (up to Corollary~\ref{Cor5.8})
contains results that have already been presented in \citep{axial}. We will
provide here simpler and more constructive proofs of slightly more general
statements using the concepts of $\beta$-vectors and $\beta$-maximality.

Sectional regularities represent a very natural continuation of the
classical notion of Castelnuovo--Mumford regularity, as they arise by
taking hyperplane sections.  We slightly extend the definition given in
\citep{axial} by allowing also for non-generic sequences.

\begin{definition}
  Let $\mathcal{I}\subset\mathcal{R}$ be a homogeneous ideal and
  $\boldsymbol{\ell}=(\ell_1,\ldots,\ell_n)$ a sequence of $n$ linearly
  independent, linear forms.  The {\em $i$-th sectional regularity} of
  $\mathcal{I}$ with respect to the sequence $\boldsymbol{\ell}$ is defined
  by
  $$\mathrm{sreg}_i(\boldsymbol{\ell};\mathcal{I}) =
  \mathrm{reg}\,\bigl(\mathcal{I}+\langle\ell_{1},\ldots,\ell_{n-i}\rangle\bigr)$$
  for any $0\leq i\leq n$. In particular, for $i=n$, we have
  $\mathrm{sreg}_n(\boldsymbol{\ell};\mathcal{I})=\mathrm{reg}(\mathcal{I})$
  independently of the chosen sequence $\boldsymbol{\ell}$.
\end{definition}

We have seen in Proposition~\ref{fintab} that in the quasi-regular case the
annihilator tables are finite.  The sectional regularity
$\mathrm{sreg}_i(\boldsymbol{\ell};\mathcal{I})$ determines the minimal
degree where the last $i$ columns of the table are all zero.  The proof of
this observation, also for our slightly extended definition, is the same as
that of \cite[Cor.~3.7]{axial}.  As simple corollaries, one obtains some
further properties and characterisations of the sectional regularities.

\begin{proposition}\label{sreg-1} 
  Let $\mathcal{I}\subset\mathcal{R}$ be a homogeneous ideal and
  $\boldsymbol{\ell}$ a sequence of $n$ linearly independent, linear forms.
  \begin{enumerate}[(i)]
  \item The sequence $\boldsymbol{\ell}$ is quasi-regular for
$\mathcal R/\mathcal I$ if and only if, for every $1\leq i\leq n$,
    \[
      \mathrm{sreg}_i(\boldsymbol{\ell};\mathcal{I}) = \max{\bigl\{j\mid
        \alpha_{sj}(\boldsymbol{\ell};\mathcal{R/I}) \neq 0,\ s\geq
        n-i\bigr\}}+1.
    \]
    Moreover, $\mathrm{sreg}_0(\boldsymbol{\ell};\mathcal I)=1$.
  \item Let $H_{\boldsymbol{\ell}}$ be the Pommaret basis of the ideal
    $\mathcal{I}$ when using the linear forms $\ell_{i}$ in a quasi-regular
    sequence as new variables.  Then, for every $1\leq i\leq n$,
    \[
      \mathrm{sreg}_i(\boldsymbol{\ell};\mathcal{I}) = \max{\bigl\{j \mid
        \beta_{j+1}^{(s+1)}(H_{\boldsymbol{\ell}}) \neq 0,\ s \geq
        n-i\bigr\}}+1
    \]
    and $\mathrm{sreg}_0(\boldsymbol{\ell};\mathcal I)=1$.
  \item In the quasi-regular case, the sectional regularities form an
    ascending sequence:
    \[
      \mathrm{sreg}_0(\boldsymbol{\ell};\mathcal{I})\leq
      \mathrm{sreg}_1(\boldsymbol{\ell};\mathcal{I})\leq\cdots\leq
      \mathrm{sreg}_n(\boldsymbol{\ell};\mathcal{I}).
    \]
  \item Almost all quasi-regular sequences $\boldsymbol{\ell}$ lead
    to the same sectional regularities
    $\mathrm{sreg}_i(\boldsymbol{\ell};\mathcal{I})$, the \emph{generic
      sectional regularities}, which we denote by
    $\mathrm{sreg}_i(\mathcal{I})$.  They satisfy, for every $1\leq i\leq n$,
    \begin{align*}
      \mathrm{sreg}_i(\mathcal{I})
      &=\mathrm{reg}\,\bigl(\mathrm{Gin}(\mathcal{I})+\langle x_1,\ldots,x_{n-i}\rangle\bigr)\\
      &=\max{\bigl\{j \mid \beta_{j+1}^{(s+1)}(H_{\mathrm{Gin}(\I)})\neq 0,\ s\geq n-i\bigr\}}+1,
    \end{align*}
    where $H_{\mathrm{Gin}(\I)}$ denotes the Pommaret basis of
    $\mathrm{Gin}(\mathcal{I})$ (which is its minimal generating set when
    \(\mathrm{char}(\K) = 0\)).  In addition, $\mathrm{sreg}_0(\mathcal I)=1$.
  \end{enumerate}
 \end{proposition}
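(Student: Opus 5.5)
The plan is to reduce everything to the quasi-stable situation, where the Pommaret basis carries the required information. Given a quasi-regular sequence $\boldsymbol{\ell}$, I will use the $\ell_i$ as new variables $x_i$. By Proposition~\ref{prop4.16}, $\mathcal I$ is then in quasi-stable position and has a Pommaret basis $H=H_{\boldsymbol{\ell}}$. In these coordinates $\mathrm{sreg}_i(\boldsymbol{\ell};\mathcal I)=\mathrm{reg}(\mathcal I+\langle x_1,\dots,x_{n-i}\rangle)$.

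For (ii), I want Proposition~\ref{omega-2} in the polynomial rather than monomial setting. Its proof carries over verbatim. The restrictions $h|_{x_1=\cdots=x_{n-i}=0}$ of the $h\in H$ with $\cls(h)\ge n-i+1$ form a Pommaret basis of $\mathcal I|_{x_1=\cdots=x_{n-i}=0}$ by \cite[Lemma~3.1]{zbMATH06126804}, and Theorem~\ref{regpom} gives the regularity. So $\mathrm{sreg}_i=\max\{\deg h\mid h\in H,\ \cls(h)\ge n-i+1\}$. Writing $\deg h=j+1$ and $\cls h=s+1$ with $s\ge n-i$ yields exactly the formula with $\beta^{(s+1)}_{j+1}(H)$. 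For $i=0$, the ideal $\mathcal I+\langle x_1,\dots,x_n\rangle$ is the maximal ideal, whose regularity is $1$. For $i\ge1$ the set $\{h : \cls h\ge n-i+1\}$ is nonempty, since by Lemma~\ref{ss-1} the pure power $x_n^{a_n}$ lies in $H$; this assumes $\mathcal I\ne\mathcal R$.

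Part (i), direction ``$\Rightarrow$'', then follows from (ii) and Theorem~\ref{alpha-pom.} via the identity $\alpha_{sj}=\beta^{(s+1)}_{j+1}(H)$ from Remark~\ref{beta-alpha-1}. For the converse I argue contrapositively. If $\boldsymbol{\ell}$ is not quasi-regular, Proposition~\ref{fintab} shows that infinitely many $\alpha_{sj}$ are non-zero. The right-hand side for $i=n$ is then $+\infty$, while $\mathrm{sreg}_n=\mathrm{reg}(\mathcal I)$ is finite, so the equality fails. Reading the condition as an equality of finite integers, this settles the converse, and I expect it to be the delicate point.

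Part (iii) is immediate from (ii), because enlarging $i$ enlarges the index set over which the maximum is taken. For (iv), Theorem~\ref{thm:gin-existence} together with \cite{bermejo} shows that for $\boldsymbol{\ell}$ in a non-empty Zariski open set the transformed ideal has leading ideal $\mathrm{Gin}(\mathcal I)$, and such $\boldsymbol{\ell}$ are quasi-regular because GIN position is quasi-stable. Since $\lt(H)$ is a Pommaret basis of the leading ideal, (ii) shows that the sectional regularities depend only on the degrees and classes of the elements of $H_{\mathrm{Gin}(\mathcal I)}$, which gives the second expression in (iv). Applying Proposition~\ref{omega-2} to the monomial ideal $\mathrm{Gin}(\mathcal I)$ gives the first expression. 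In characteristic zero $\mathrm{Gin}(\mathcal I)$ is strongly stable, and strongly stable ideals are stable, so the Pommaret basis coincides with the minimal generating set.
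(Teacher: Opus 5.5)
Your proof is correct, but it runs in the opposite order to the paper's.

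\textbf{The paper's route.} The paper proves (i) first. It iterates the hyperplane-section formula of Proposition~\ref{regl} along the chain of modules $\mathcal R/\langle\mathcal I,\ell_1,\dots,\ell_{n-i}\rangle$. This expresses each sectional regularity as the maximum of the regularities of the finite-length annihilator modules $A_s(\boldsymbol\ell;\mathcal R/\mathcal I)$ with $s\ge n-i$. The regularity of each such module is simply its top non-vanishing degree, which gives (i). Part (ii) then follows from Theorem~\ref{alpha-pom.}, and parts (iii) and (iv) follow from (i), (ii) and Theorem~\ref{genann}.

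\textbf{Your route.} You prove (ii) directly, via the polynomial version of Proposition~\ref{omega-2}: the restriction lemma for Pommaret bases combined with Theorem~\ref{regpom}. Only afterwards do you translate to annihilator numbers with Theorem~\ref{alpha-pom.}. Your converse in (i) uses Proposition~\ref{fintab} exactly as the paper does.

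\textbf{What each approach buys.} Your argument bypasses Proposition~\ref{regl} entirely. In the paper, that proposition rests on the relative Pommaret basis and syzygy construction of Proposition~\ref{reg12}(ii), so your proof is shorter and stays purely within Pommaret-basis combinatorics. You also treat the edge cases more carefully than the paper does: non-emptiness via the class-$n$ generator with leading term $x_n^{a_n}$, the value at $i=0$, and the converse of (i) pinned to $i=n$. In (iii), the remaining step $\mathrm{sreg}_0=1\le\mathrm{sreg}_1$ is immediate from that non-emptiness, since all generators have positive degree.

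The paper's route has a different advantage. It presents (i) as an instance of the general principle that regularity is controlled by hyperplane sections and annihilator modules. That argument is module-theoretic, does not need Theorem~\ref{alpha-pom.} to obtain (i), and parallels the argument in \cite{axial}.
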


 \begin{proof}
   For part (i), we note that, by Proposition~\ref{fintab}, the maximum on
   the right hand side is finite, if and only if the sequence is
   quasi-regular, whereas the left hand side is always finite.  Hence
   quasi-regularity is a necessary condition.  In the quasi-regular case,
   introducing the modules
   $\mathcal{M}^{(i)} = \mathcal{R} /
   \langle\mathcal{I},\ell_{1},\dots,\ell_{n-i}\rangle$ for
   $0\leq i\leq n$, we have
   $\mathcal{M}^{(i-1)} \cong \mathcal{M}^{(i)} /
   \ell_{i}\mathcal{M}^{(i)}$.  An iteration over Proposition~\ref{regl}
   yields
   \begin{displaymath}
     \mathrm{reg}( \mathcal{M}^{(i)}) =
     \max{\bigl\{\mathrm{reg}(0\colon_{\mathcal{M}^{(j)}}\ell_{j})\mid j\leq
       i\bigr\}},
   \end{displaymath}
   since $\mathcal{M}^{(0)}\cong\K$.  Part (i) follows now immediately from
   the definition of the annihilator numbers.  Part (ii) is a trivial
   consequence of Theorem~\ref{alpha-pom.} and part (iii) of part (i).  The
   claims in part (iv) are simple consequences of (i), (ii) and the
   genericity statement in Theorem~\ref{genann}.
 \end{proof}

 \begin{example}\label{sregnbmax}
   We consider the ideal
   $\I=\langle x_3^2,\, x_1x_2+x_2x_3,\, x_1x_3,\ x_2^3,\, x_1x_2^2 \rangle
   \subset \QQ[x_1,x_2,x_3]$ with the term order
   $x_1 \prec_{{drl}} x_2 \prec_{{drl}} x_3$.  In \cite[Ex.~5.13]{HSS}, it
   was shown that its leading ideal
   $\LT{(\I)}=\langle x_3^2,\, x_2x_3,\, x_1x_3,\, x_2^3,\, x_1x_2^2,\,
   x_1^2x_2\rangle$ is strongly stable, but not in $\beta$-maximal position
   (and thus also not in GIN position).  We first compute the sectional
   regularities for the quasi-regular sequence
   $\boldsymbol{\ell}=(x_{1},x_{2},x_{3})$ defined by the chosen
   coordinates.  Since $\LT{(\I)}$ is strongly stable, its minimal
   generators are the leading terms of the Pommaret basis
   $H_{\boldsymbol{\ell}}$ of $\I$.  Thus the only non-vanishing
   $\beta$-numbers of $H_{\boldsymbol{\ell}}$ are contained in the two $\beta$-vectors
   $\beta_{2}(H_{\boldsymbol{\ell}})=(1,\,1,\,1)$ and
   $\beta_{3}(H_{\boldsymbol{\ell}})=(0,\,1,\,2)$.
   Theorem~\ref{alpha-pom.} then yields the following table for the
   annihilator numbers
   $\alpha_{ij}(\boldsymbol{\ell},\mathcal{R}/\mathcal{I})$:
   \begin{displaymath}
     \begin{array}{c|ccc}
       \alpha & 0 & 1 & 2 \\
       \hline
       0 & \cdot & 1 & 2 \\
       1 & \cdot & 1 & 1 \\
       2 & \cdot & 1 & \cdot
     \end{array}\quad.
   \end{displaymath}
   Proposition~\ref{sreg-1} (i) or (ii) yields for the sectional
   regularities $\mathrm{sreg}_{i}(\boldsymbol{\ell},\mathcal{I})$ the
   ascending sequence $(1,\,2,\,3,\,3)$.  For this simple ideal, they can
   also be easily verified by directly adjoining one variable after the
   other and computing modulo the adjoined variables.

   Using the method described in the last section, one finds that the
   generic initial ideal is given by
   $\mathrm{Gin}(\I)=\langle x_3^2,\, x_2x_3,\, x_2^2,\, x_1^2x_3,\,
   x_1^2x_2\rangle$ leading to the $\beta$-vectors
   $\beta_{2}(H_{\mathrm{Gin}(\I)})=(1,\,2,\,0)$ and
   $\beta_{3}(H_{\mathrm{Gin}(\I)})=(0,\,0,\,2)$ and the generic
   annihilator numbers
    \begin{displaymath}
     \begin{array}{c|ccc}
       \alpha & 0 & 1 & 2 \\
       \hline
       0 & \cdot & \cdot & 2 \\
       1 & \cdot & 2 & \cdot \\
       2 & \cdot & 1 & \cdot
     \end{array}\quad.
   \end{displaymath}
   Thus the generic sectional regularities are given by the sequence $(1,\,
   2,\, 2,\, 3)$.  Again, it is not difficult to verify this directly from
   the definition.
\end{example}

\begin{definition}
  Let $\mathcal{I}\subset\mathcal{R}$ be a homogeneous ideal and
  $\boldsymbol\ell=(\ell_1,\ldots,\ell_n)$ a quasi-regular sequence for
  $\mathcal{R}/\I$.  An annihilator number
  $\alpha_{ij}(\boldsymbol\ell;\mathcal{R/I})$ is called {\em co-extremal},
  if it is non-zero and $\alpha_{st}(\boldsymbol\ell;\mathcal{R/I})=0$ for
  every $(s,t)\neq (i,j)$ with $s\geq i, t\geq j$.
\end{definition}

Given a quasi-regular sequence $\boldsymbol{\ell}$, we denote again by
$H_{\boldsymbol{\ell}}$ the Pommaret basis of $\I$, in the coordinates
defined by $\boldsymbol{\ell}$.  By Theorem~\ref{alpha-pom.}, the
definition of co-extremal annihilator numbers is equivalent to the
condition that $\beta_{j+1}^{(i+1)}(H_{\boldsymbol{\ell}})\neq 0$, while
$\beta_{t+1}^{(s+1)}(H_{\boldsymbol{\ell}})=0$ for every $(s,t)\neq(i,j)$
with $s\geq i, t\geq j$.  Our generalised definition of sectional
regularities leads then to the following generalisation of
\cite[Cor.~3.9]{axial} (their generic statement is a trivial corollary of
our result).

\begin{proposition}\label{Cor5.8}
  Let $\mathcal{I\subset R}$ be a homogeneous ideal and
  $\boldsymbol\ell=(\ell_1,\ldots,\ell_n)$ a quasi-regular sequence of
  linear forms.  If $\alpha_{n-i,j}(\boldsymbol\ell;\mathcal{R/I})$ is a
  co-extremal annihilator number, then
  $\mathrm{sreg}_i(\boldsymbol{\ell};\mathcal{I})=j+1$.  Conversely, if
  $\mathrm{sreg}_{i-1}(\boldsymbol{\ell};\mathcal{I}) <
  \mathrm{sreg}_i(\boldsymbol{\ell};\mathcal{I})=j+1$, then
  $\alpha_{n-i,j}(\boldsymbol\ell;\mathcal{R/I})$ is a co-extremal
  annihilator number.  Consequently, the locations of the co-extremal
  annihilator numbers control both the jumps and the values attained by the
  sequence
  $\bigl(\mathrm{sreg}_{1}(\boldsymbol{\ell};\mathcal{I}), \ldots,
  \mathrm{sreg}_n(\boldsymbol{\ell};\mathcal{I})\bigr)$.
\end{proposition}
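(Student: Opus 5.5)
The plan is to reduce everything to Proposition~\ref{sreg-1}~(i), which holds because $\boldsymbol\ell$ is quasi-regular. For every $1\leq i\leq n$ it gives
\[
  \mathrm{sreg}_i(\boldsymbol{\ell};\mathcal{I})=\max{\bigl\{t\mid \alpha_{st}(\boldsymbol{\ell};\mathcal{R/I})\neq0,\ s\geq n-i\bigr\}}+1 .
\]
Write $m_i$ for this maximum. By Proposition~\ref{fintab} the table has only finitely many non-zero entries, so each $m_i$ is well defined whenever the columns $s\geq n-i$ contain a non-zero entry. The sets of columns over which the maxima are taken are nested: passing from $i-1$ to $i$ adds exactly column $n-i$. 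Hence $m_i=\max\{m_{i-1},\max\{t\mid\alpha_{n-i,t}\neq0\}\}$. For $i=1$ I will use the convention $\mathrm{sreg}_0=1$, which in the formula above corresponds to $m_0=0$.

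For the first assertion, suppose $\alpha_{n-i,j}$ is co-extremal. Being non-zero, it lies in a column $s=n-i\geq n-i$, so $m_i\geq j$. Conversely, let $\alpha_{st}\neq0$ with $s\geq n-i$. If we had $t>j$, then $(s,t)\neq(n-i,j)$ with $s\geq n-i$ and $t\geq j$, and co-extremality would force $\alpha_{st}=0$, a contradiction. So $t\leq j$, hence $m_i=j$ and $\mathrm{sreg}_i=j+1$.

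For the converse, assume $\mathrm{sreg}_{i-1}<\mathrm{sreg}_i=j+1$, so that $m_{i-1}<m_i=j$. By the recursion, the maximum $j$ must come from column $n-i$, since it exceeds everything in columns $s\geq n-i+1$. Thus $\alpha_{n-i,j}\neq0$ and $\alpha_{n-i,t}=0$ for $t>j$. For $s>n-i$ every non-zero $\alpha_{st}$ has $t\leq m_{i-1}<j$, so $\alpha_{st}=0$ whenever $t\geq j$. Together these give co-extremality. The main obstacle is the edge case $i=1$: there $m_0$ must be read as $0$, which matches $\mathrm{sreg}_0=1$ from Proposition~\ref{sreg-1}. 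I need to check that this convention is consistent, i.e. that a jump at $i=1$ with value $j+1\geq2$ still forces $\alpha_{n-1,j}\neq0$; this follows because $m_1=j\geq1$ is attained. The final sentence of the statement then just summarizes the two directions.
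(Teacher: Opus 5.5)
Your proof is correct and is essentially the paper's argument: both read $\mathrm{sreg}_i(\boldsymbol\ell;\mathcal{I})-1$ as the largest $t$ with $\alpha_{st}(\boldsymbol\ell;\mathcal{R/I})\neq0$ for some $s\geq n-i$, and then compare these maxima for $i-1$ and $i$. The paper phrases this through the $\beta$-numbers of $H_{\boldsymbol\ell}$ (Proposition~\ref{sreg-1}(ii)) and translates back with Theorem~\ref{alpha-pom.}, whereas you work directly with part~(i), and your convention $m_0=0$ for $i=1$ correctly matches $\mathrm{sreg}_0(\boldsymbol\ell;\mathcal{I})=1$.
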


\begin{proof}
  Assume first that $\alpha_{n-i,j}(\boldsymbol\ell;\mathcal{R/I})$ is a
  co-extremal annihilator number.  Then,
  $\beta_{j+1}^{(n-i+1)}(H_{\boldsymbol{\ell}})\neq 0$ and for all
  $s\geq n-i, t\geq j$ with $(s,t)\neq (i,j)$, we have
  $\beta_{t+1}^{(s+1)}(H_{\boldsymbol{\ell}})=0$.  It follows from
  Proposition~\ref{sreg-1} that
  $$\mathrm{sreg}_i(\boldsymbol{\ell};\mathcal{I})
  =\max{\bigl\{t\mid \beta_{t+1}^{(s+1)}(H_{\boldsymbol{\ell}})\neq 0,\
    s\geq n-i\bigr\}}+1=j+1.$$ Now, suppose that
  $\mathrm{sreg}_{i-1}(\boldsymbol{\ell};\mathcal{I}) <
  \mathrm{sreg}_i(\boldsymbol{\ell};\mathcal{I})=j+1$.  Then,
  \begin{align*}
    \mathrm{sreg}_{i-1}(\boldsymbol{\ell};\mathcal{I})&=
         \max{\bigl\{t\mid\beta_{t+1}^{(s+1)}(H_{\boldsymbol{\ell}})\neq 0,\ s\geq n-i+1\bigr\}}+1\\
     &<\max{\bigl\{k\mid \beta_{k+1}^{(s+1)}(H_{\boldsymbol{\ell}})\neq
       0,\ s\geq n-i\bigr\}}+1 =
       \mathrm{sreg}_i(\boldsymbol{\ell};\mathcal{I})=j+1
  \end{align*}
  which implies that $\beta_{j+1}^{(n-i+1)}(H_{\boldsymbol{\ell}})\neq 0$
  and every $t$ for which $\beta_{t+1}^{(s+1)}(H_{\boldsymbol{\ell}})\neq 0$
  with $s\geq n-i+1$, is smaller than $j$.  Hence Theorem~\ref{alpha-pom.} yields
  $$j = \max{\bigl\{k \mid \alpha_{n-i,k}(\boldsymbol{\ell};\mathcal{R}/\mathcal{I})\neq 0\bigr\}} >
  \max{\bigl\{t \mid
    \alpha_{n-\ell,t}(\boldsymbol{\ell};\mathcal{R}/\mathcal{I})\neq 0,\
    \ell<i\bigr\}}.$$ Hence
  $\alpha_{n-i,j}(\boldsymbol{\ell};\mathcal{R}/\mathcal{I})\ne 0$ and
  $\alpha_{st}(\boldsymbol{\ell};\mathcal{R}/\mathcal{I})=0$ for all index
  pairs $(s,t)\neq (n-i,j)$ with $s\geq n-i, t\geq j$, which proves the
  co-extremality property.  The final statement is an obvious consequence
  of the first two statements.
\end{proof}

It is now a natural question under what conditions the simple sequence
$\boldsymbol{\ell}=(x_{1},\dots,x_{n})$ leads to the generic sectional
regularities.  In view of Remark~\ref{genbmax}, one expects that this
happens in $\beta$-maximal position and this is indeed the case.

\begin{theorem}\label{5.61-ths}
  Suppose that $\mathcal{I}\subset \mathcal{R}$ is a homogeneous ideal in
  $\beta$-maximal position, and let $H$ be its Pommaret basis. Then, for every $1\leq i\leq n$,
  \begin{align*}
    \mathrm{sreg}_i(\mathcal{I}) &=
         \max{\bigl\{j\mid \beta_{j+1}^{(s+1)}\bigl(\lt{(H)}\bigr) \neq 0,\
                                   s \geq n-i\bigr\}} + 1\\
    &= \mathrm{sreg}_{i}(x_{1},\dots,x_{n};\I) =
      \mathrm{reg}\,\bigl(\mathcal{I} + \langle x_1, \ldots, x_{n-i}
      \rangle\bigr). 
  \end{align*}
  Moreover, $\mathrm{sreg}_0(\mathcal I)=1$.
\end{theorem}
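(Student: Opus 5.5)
Since $\beta$-maximal position implies quasi-stable position (by \cite[Prop.~4.11]{HSS}, as recalled in Section~\ref{sec:5}), the ideal $\mathcal{I}$ possesses a Pommaret basis $H$, and $(x_1,\dots,x_n)$ is quasi-regular for $\mathcal{R}/\mathcal{I}$ by Proposition~\ref{prop4.16}. The chain of equalities then splits into three parts, which we prove in turn.

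The last two equalities are formal. The equality $\mathrm{sreg}_i(x_1,\dots,x_n;\mathcal{I})=\mathrm{reg}(\mathcal{I}+\langle x_1,\dots,x_{n-i}\rangle)$ is just the definition. For the identity with the $\beta$-expression, Proposition~\ref{sreg-1}(ii) applied with $\boldsymbol{\ell}=(x_1,\dots,x_n)$ gives
\[
\mathrm{sreg}_i(x_1,\dots,x_n;\mathcal{I})=\max{\bigl\{j\mid \beta^{(s+1)}_{j+1}(H)\neq 0,\ s\ge n-i\bigr\}}+1 .
\]
Here $\beta^{(k)}_q(H)=\beta^{(k)}_q(\lt(H))$, because degree and class of a generator are read off from its leading term. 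Alternatively, one can invoke Proposition~\ref{omega-2} for the monomial ideal $\lt(\mathcal{I})$, together with the fact that $\mathcal{I}+\langle x_1,\dots,x_{n-i}\rangle$ has Pommaret basis given by the restrictions of the elements of class $\ge n-i+1$ (as in the proof of Proposition~\ref{reg12}, via \cite[Lemma~3.1]{zbMATH06126804}), and then apply Theorem~\ref{regpom}.

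The substantive step is showing that these values are the generic ones, i.e.\ that they equal $\mathrm{sreg}_i(\mathcal{I})$. By Proposition~\ref{sreg-1}(iv),
\[
\mathrm{sreg}_i(\mathcal{I})=\max{\bigl\{j\mid \beta^{(s+1)}_{j+1}(H_{\mathrm{Gin}(\mathcal{I})})\neq 0,\ s\ge n-i\bigr\}}+1 .
\]
So it suffices to prove $\beta^{(k)}_q(H)=\beta^{(k)}_q(H_{\mathrm{Gin}(\mathcal{I})})$ for all $q,k$. To get this, apply Proposition~\ref{prop:betagin}: $\beta$-maximal position yields $\beta_q(\lt(\mathcal{I}))=\beta_q(\mathrm{Gin}(\mathcal{I}))$ for all $q$ with $\mathcal{I}_q\neq 0$. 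Both $\lt(\mathcal{I})$ and $\mathrm{Gin}(\mathcal{I})$ are quasi-stable (the latter is even Borel-fixed), so the recursion lemma of Section~\ref{sec:5},
\[
\beta^{(k)}_q(\mathcal{J})=\beta^{(k)}_q(H_{\mathcal{J}})+\sum_{j\ge k}\beta^{(j)}_{q-1}(\mathcal{J}),
\]
can be solved for $\beta^{(k)}_q(H_{\mathcal{J}})$. Hence the $\beta$-numbers of the Pommaret bases coincide whenever those of the ideals coincide. One minor point to check is the degrees where $\mathcal{I}_q=0$: there all $\beta$-numbers vanish for both ideals, since Hilbert functions agree. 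Alternatively, this whole step is exactly Remark~\ref{genbmax}, combined with Theorem~\ref{alpha-pom.} and Theorem~\ref{genann}: the annihilator table for $(x_1,\dots,x_n)$ is the generic one, and Proposition~\ref{sreg-1}(i) expresses $\mathrm{sreg}_i$ purely through that table.

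Finally, $\mathrm{sreg}_0(\mathcal{I})=1$ is already contained in Proposition~\ref{sreg-1}(iv). I expect no real obstacle. The only care needed is bookkeeping: confirming that the inversion of the $\beta$-recursion is legitimate at every degree, and that the correspondence between $\beta$-numbers of $H$ and of $\lt(H)$ matches the index conventions of Proposition~\ref{sreg-1}.
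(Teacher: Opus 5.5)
Your proof is correct and follows essentially the same route as the paper. You express $\mathrm{sreg}_i(\mathcal{I})$ through $H_{\mathrm{Gin}(\mathcal{I})}$ via Proposition~\ref{sreg-1}(iv), use $\beta$-maximality to replace these $\beta$-numbers by those of $\lt(H)$, and identify the result with $\mathrm{reg}(\mathcal{I}+\langle x_1,\ldots,x_{n-i}\rangle)$. The differences are minor: you make the transfer step explicit by inverting the $\beta$-recursion lemma of Section~\ref{sec:5}, where the paper only cites Corollary~\ref{alpgaIlt} and Theorem~\ref{genann}; and you finish with Proposition~\ref{sreg-1}(ii) for $\boldsymbol{\ell}=(x_1,\ldots,x_n)$ instead of Proposition~\ref{omega-2}, which avoids having to pass from $\lt(\mathcal{I})$ back to $\mathcal{I}$.
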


\begin{proof}
  According to Proposition \ref{sreg-1}(iv), we have
  $$\mathrm{sreg}_i(\mathcal{I}) =
  \max{\bigl\{j \mid \beta_{j+1}^{(s+1)}(H_{\mathrm{Gin}(\I)}) \neq 0,\ s
    \geq n-i\bigr\}} + 1.$$ By the assumption that $\mathcal{I}$ is in
  $\beta$-maximal position and by using Corollary~\ref{alpgaIlt} together
  with Theorem~\ref{genann}, the first equality follows for each $j$ with
  $\mathcal{I}_{j+1} \neq 0$.  Using this equality and
  Theorem~\ref{omega-2}, we obtain
  \begin{align*}
    \mathrm{sreg}_i(\mathcal{I}) &= \max{\{ j \mid \beta_{j+1}^{(s+1)}(\lt(H)) \neq 0,\ s \geq n - i \}} + 1\\
                                 &= \max{\{ \deg(h) \mid h \in \lt(H),\
                                   \cls(h) \geq n - i + 1 \}}\\
                                   &=
                                   \mathrm{reg}\bigl(\mathcal{I} + \langle x_1, \ldots, x_{n-i} \rangle\bigr).\qedhere
  \end{align*}
\end{proof}

In Example~\ref{sregnbmax}, we noted that the considered ideal was
\emph{not} in $\beta$-maximal position.  Hence it is not surprising that we
obtained non generic sectional regularities using the variables as
quasi-regular sequence.  In the following example, we consider an ideal
which is in $\beta$-maximal position, but not in GIN position.  We will see
that now one can use the given variables as linear forms for computing the
generic sectional regularities.

\begin{example}
  Consider the monomial ideal
  $\mathcal{I}=\langle x_4^2,x_3x_4,x_3^2,x_2^2x_4,x_1x_4\rangle\subset
  \mathbb{Q}[x_1,x_2,x_3,x_4]$ with the term order
  $x_1\prec_{drl}\cdots\prec_{drl}x_4$.  The Pommaret basis of $\I$ is
  obtained by adding the term $x_{1}x_{2}x_{4}$.  If we use
  $\boldsymbol{\ell}=(x_{1},x_{2},x_{3},x_{4})$, then a straightforward calculation yields
  \begin{gather*}
    \mathrm{sreg}_4(\boldsymbol{\ell};\mathcal{I})=
    \mathrm{sreg}_3(\boldsymbol{\ell};\mathcal{I})=
    \mathrm{reg}(\I)=3\\
    \mathrm{sreg}_2(\boldsymbol{\ell};\mathcal{I})=
    \mathrm{sreg}_1(\boldsymbol{\ell};\mathcal{I})= 2,\quad
    \mathrm{sreg}_0(\boldsymbol{\ell};\mathcal{I})= 1,
  \end{gather*}
  which also confirms
  $\mathrm{sreg}_0(\boldsymbol{\ell};\mathcal{I}) \leq\cdots\leq
  \mathrm{sreg}_4(\boldsymbol{\ell};\mathcal{I})$ as stated in
  Proposition~\ref{sreg-1}(iii).  Using the methods discussed above, we
  obtain
  $\mathrm{Gin}(\mathcal{I})=\langle
  x_2x_4,x_3^2,x_3x_4,x_4^2,x_1^2x_4\rangle$ and by comparing
  $\beta$-vectors one can easily verify that $\I$ is in $\beta$-maximal,
  but not in GIN position.  Furthermore, the $\beta$-vectors confirm
  Proposition~\ref{sreg-1}(ii).

  Let us now apply the results of Proposition~\ref{sreg-1} to the ideal
  $\mathcal{I}$ to obtain the generic sectional regularities.  For this
  purpose, we set $\ell_1=a_{1,4}x_4+a_{1,3}x_3+a_{1,2}x_2+x_1$,
  $\ell_2=a_{2,4}x_4+a_{2,3}x_3+x_2$, $\ell_3=a_{3,4}x_4+x_3$,
  $\ell_{4}=x_{4}$ with undetermined coefficients~$a_{ij}$.  We
  trivially have that
  $\mathrm{sreg}_0(\mathcal{I})=\mathrm{reg}(\mathcal{I}+\langle
  \ell_1,\ell_2,\ell_3,\ell_{4}\rangle)=1$.  Next, we determine
  $\mathrm{sreg}_1(\mathcal{I})=\mathrm{reg}(\mathcal{I}+\langle
  \ell_1,\ell_2,\ell_3\rangle)$.  By computing a Pommaret basis of the
  ideal $\mathcal{I}+\langle \ell_1,\ell_2,\ell_3\rangle$, we find that its
  regularity is~$2$.  On the other hand, we also have
  $\mathrm{reg}\bigl(\mathrm{Gin}(\mathcal{I})+\langle
  x_1,x_2,x_3\rangle\bigr)=2$ which confirms the equality of these two
  quantities.  Moreover, it is easy to verify that
  $\beta_{2}^{(4)}(H_{\mathrm{Gin}(\I)})=1$ and
  $\beta_3^{(4)}(H_{\mathrm{Gin}(\I)})=0$, so that
  $\max{\{j\ |\ \beta_{j+1}^{(s+1)}(H_{\mathrm{Gin}(\I)})\neq 0,\ s\geq
    3\}}+1=2$.  Repeating the same process for $i=2,3$ yields
  $\mathrm{sreg}_2(\mathcal{I})=\mathrm{sreg}_3(\mathcal{I})=2,
  \mathrm{sreg}_4(\mathcal{I})=\mathrm{reg}(\mathcal{I})=3$, i.\,e.\
  exactly the same values as we obtained above using the variables as
  linear forms as predicted by Theorem~\ref{5.61-ths}.
\end{example}

The notion of an axial constant was introduced in \cite{CH25a} and a generic
definition was again given subsequently by \citep{axial}. We adapt this definition to
our purposes by relating it to Pommaret bases.  It follows from well-known
properties of quasi-stable ideals (see e.\,g.\ \cite[Prop.~5.3.4]{S}) that
$a_i(\mathcal{I})<\infty$ only for $\dim{\mathcal{I}}<i\leq n$.

\begin{definition}
  Let $\mathcal{I}\subset\mathcal{R}$ be an ideal in quasi-stable position
  and $H$ its Pommaret basis.  The {\em $i$-th axial constant} of
  $\mathcal{I}$ is defined by
  $$a_i(\mathcal{I})=\min{\bigl\{j\in\mathbb{N}\ |\
    x_i^j\in\LT{(H)}\bigr\}}$$ for any $1\leq i\leq n$.  If no pure power
  of $x_i$ appears in $\LT{(H)}$, we set $a_i(\mathcal{I})=\infty$.
  Moreover, we call $a_i\bigl(\mathrm{Gin}(\mathcal{I})\bigr)$ the {\em
    $i$-th generic axial constant} of $\mathcal{I}$.
\end{definition}

\begin{proposition}
  If $\mathrm{char}(\K)=0$, then we have
  $\mathrm{sreg}_i(\mathcal{I})=a_{n-i+1}\bigl(\mathrm{Gin}(\mathcal{I})\bigr)$
  for every index $1\leq i\leq n-\mathrm{dim}(\mathcal{I})$.
\end{proposition}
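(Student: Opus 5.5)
The plan is to reduce the statement to Corollary~\ref{ss-axial-15}, applied to the monomial ideal $\mathcal{J}=\mathrm{Gin}(\mathcal{I})$. In characteristic zero, $\mathrm{Gin}(\mathcal{I})$ is strongly stable (Galligo, Bayer--Stillman). In particular it is quasi-stable, and its Pommaret basis coincides with its minimal generating set. Since $\mathcal{I}$ and $\mathrm{Gin}(\mathcal{I})$ have the same Hilbert function, they have the same dimension $D=\dim(\mathcal{I})$. Hence the constant $c=D+1$ attached to $\mathcal{J}$ in Section~\ref{sec:3} is $\dim(\mathcal{I})+1$.

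First I rewrite the left-hand side. By Proposition~\ref{sreg-1}(iv),
\[
\mathrm{sreg}_i(\mathcal{I})=\mathrm{reg}\bigl(\mathrm{Gin}(\mathcal{I})+\langle x_1,\ldots,x_{n-i}\rangle\bigr).
\]
Put $k=n-i+1$, so that the adjoined variables are $x_1,\ldots,x_{k-1}$. The hypothesis $1\le i\le n-\dim(\mathcal{I})$ translates exactly into $c=D+1\le k\le n$.

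Next I apply Corollary~\ref{ss-axial-15} to $\mathcal{J}$ with index $k$. It gives
\[
\mathrm{reg}\bigl(\mathcal{J}+\langle x_1,\ldots,x_{k-1}\rangle\bigr)=a_k,
\]
where $a_k$ is the exponent of the pure power $x_k^{a_k}$ in the Pommaret basis $H$ of $\mathcal{J}$ (Lemma~\ref{ss-1}). It remains to see that $a_k$ is the axial constant $a_k(\mathcal{J})=\min\{j\mid x_k^j\in\LT(H)\}$. Since $\mathcal{J}$ is monomial, $\LT(H)=H$. By Lemma~\ref{ss-1}, for $k\ge c$ the basis $H$ contains a pure power of $x_k$. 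It contains only one, because two distinct powers of $x_k$ would both have class $k$ and the larger would be a Pommaret multiple of the smaller, contradicting the direct sum decomposition. So the minimum in the definition is attained at $a_k$, and $\mathrm{sreg}_i(\mathcal{I})=a_{n-i+1}(\mathrm{Gin}(\mathcal{I}))$.

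The main obstacle is not computational but checking the hypotheses. The key input is strong stability of $\mathrm{Gin}(\mathcal{I})$ in characteristic zero. This is a classical theorem not stated in the paper, so I would cite it (e.g.\ \cite{herzog}). I also need the equality of dimensions of $\mathcal{I}$ and $\mathrm{Gin}(\mathcal{I})$, which follows from equal Hilbert functions. One further point needs care: Corollary~\ref{ss-axial-15} is stated under the standing convention that $\mathcal{I}$ is strongly stable of dimension $D$, so I will check that its proof, Case~2 of Proposition~\ref{omega-1} combined with Lemma~\ref{ss-3}, uses nothing beyond strong stability of $\mathcal{J}$.
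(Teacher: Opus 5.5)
Your proposal is correct and follows the paper's proof. Both rewrite $\mathrm{sreg}_i(\mathcal{I})$ as $\mathrm{reg}\bigl(\mathrm{Gin}(\mathcal{I})+\langle x_1,\ldots,x_{n-i}\rangle\bigr)$ via Proposition~\ref{sreg-1}(iv), then apply Corollary~\ref{ss-axial-15} to the strongly stable ideal $\mathrm{Gin}(\mathcal{I})$. Your additional checks (equal dimension via the Hilbert function, $c\le n-i+1\le n$, and uniqueness of the pure power $x_k^{a_k}$ in the Pommaret basis) are details the paper leaves implicit.
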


\begin{proof}
  Since we assume that $\mathrm{char}(\K)=0$, the ideal
  $\mathrm{Gin}(\mathcal{I})$ is strongly stable.  Hence, by Corollary
  \ref{ss-axial-15}, we get
  $$\mathrm{sreg}_i(\mathcal{I})=
  \mathrm{reg}\bigl(\mathrm{Gin}(\mathcal{I})+\langle
  x_1,\ldots,x_{n-i}\rangle\bigr)=
  \omega_{n-i+1}\bigl(\mathrm{Gin}(\mathcal{I})\bigr)=
  a_{n-i+1}\bigl(\mathrm{Gin}(\mathcal{I})\bigr).\qedhere$$
\end{proof}

\begin{corollary}
  Let \(\mathcal{I} \subset \mathcal{R}\) be a homogeneous ideal that is
  both strongly stable and in $\beta$-maximal position. Then, for each
  \(1 \le i \le n-\mathrm{dim}(\mathcal{I})\), we have
  \( \mathrm{sreg}_i(\mathcal{I}) = a_{n - i + 1}(\mathcal{I})
  \). Furthermore, if \(\mathrm{char}(\K) = 0\), these quantities coincide
  with \(a_{n - i + 1}\bigl(\mathrm{Gin}(\mathcal{I})\bigr)\).
\end{corollary}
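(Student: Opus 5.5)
The plan is to combine Theorem~\ref{5.61-ths} with Corollary~\ref{ss-axial-15}. Here ``strongly stable'' for a homogeneous ideal means that $\lt(\mathcal I)$ is strongly stable, as in the definition of strongly stable position.

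First I fix the setting. Since $\mathcal I$ is in $\beta$-maximal position, it is in quasi-stable position. So it has a Pommaret basis $H$, and $\lt(H)$ is the Pommaret basis of $\lt(\mathcal I)$. Let $D=\dim(\mathcal I)=\dim(\lt(\mathcal I))$ and $c=D+1$, as in Section~\ref{sec:3}.

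Next, for $1\le i\le n$, Theorem~\ref{5.61-ths} gives
\[
\mathrm{sreg}_i(\mathcal I)=\mathrm{reg}\bigl(\mathcal I+\langle x_1,\dots,x_{n-i}\rangle\bigr),
\]
and its proof shows that this equals $\max\{\deg(h)\mid h\in\lt(H),\ \cls(h)\ge n-i+1\}$. By Proposition~\ref{omega-2}, this last value is $\mathrm{reg}\bigl(\lt(\mathcal I)+\langle x_1,\dots,x_{n-i}\rangle\bigr)$. So the sectional regularity depends only on the strongly stable monomial ideal $\lt(\mathcal I)$.

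Now set $k=n-i+1$. The condition $1\le i\le n-D$ translates into $c\le k\le n$. Corollary~\ref{ss-axial-15}, applied to $\lt(\mathcal I)$ with its pure-power exponents $a_c,\dots,a_n$ from Lemma~\ref{ss-1}, gives
\[
\mathrm{reg}\bigl(\lt(\mathcal I)+\langle x_1,\dots,x_{k-1}\rangle\bigr)=a_k .
\]
By the definition of axial constants, $a_k$ is exactly $a_k(\mathcal I)=\min\{j\mid x_k^j\in\lt(H)\}$. Hence $\mathrm{sreg}_i(\mathcal I)=a_{n-i+1}(\mathcal I)$.

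For the characteristic-zero statement, I would invoke the preceding proposition, which gives $\mathrm{sreg}_i(\mathcal I)=a_{n-i+1}\bigl(\mathrm{Gin}(\mathcal I)\bigr)$ in the same index range. Comparing this with the first part yields the claimed coincidence.

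The main point needing care is bookkeeping rather than a real obstacle. One must check that the index range and the exponents $a_k$ from Section~\ref{sec:3} match the axial constants of $\mathcal I$. This holds because the pure powers in the Pommaret basis of $\lt(\mathcal I)$ are exactly those in $\lt(H)$. One should also note that $\dim$ is unchanged when passing from $\mathcal I$ to its leading ideal.
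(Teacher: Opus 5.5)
Your proposal is correct and follows essentially the paper's proof. You use Theorem~\ref{5.61-ths} to reduce to $\mathrm{reg}(\mathcal I+\langle x_1,\dots,x_{n-i}\rangle)$, pass to $\lt(\mathcal I)$, apply Corollary~\ref{ss-axial-15} to get $a_{n-i+1}(\mathcal I)$, and use strong stability of $\mathrm{Gin}(\mathcal I)$ (via the preceding proposition) in characteristic zero; the only differences are that you reach $\lt(\mathcal I)$ through Proposition~\ref{omega-2} instead of Corollary~\ref{alpgaIlt} and skip the intermediate $\omega_{n-i+1}$ step, which is, if anything, slightly cleaner.
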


\begin{proof}
  Invoking the previous theorem and noting that \(\mathcal{I}\) is in
  \(\beta\)-maximal position, we deduce that
  \(\mathrm{sreg}_i(\mathcal{I}) = \mathrm{reg}\big(\mathcal{I} + \langle
  x_1, \ldots, x_{n-i} \rangle\big)\).  Moreover, since \(\mathcal{I}\) is
  in quasi-stable position, Corollary~\ref{alpgaIlt} implies that
  \(\mathrm{sreg}_i(\mathcal{I}) =
  \mathrm{sreg}_i\bigl(\lt(\mathcal{I})\bigr)\).  On the other hand,
  because \(\lt(\mathcal{I})\) is strongly stable, Proposition~\ref{omega-1}
  yields
  \(\mathrm{reg}\big(\lt(\mathcal{I}) + \langle x_1, \ldots, x_{n-i}
  \rangle\big) = \omega_{n - i + 1}\bigl(\lt(\mathcal{I})\bigr)\).
  Furthermore, by Corollary~\ref{ss-axial-15}, for every
  \(c \le n - i + 1 \le n\), we have
  \(\omega_{n - i + 1}\bigl(\lt(\mathcal{I})\bigr) = a_{n - i +
    1}(\mathcal{I})\).  The final equality follows from the fact that
  \(\mathrm{Gin}(\mathcal{I})\) is strongly stable when
  \(\mathrm{char}(\K) = 0\).
\end{proof}
%%%%%%%%%%%%%%%%%%%%%%%%%%%%%%%%%%%%%%%%%%%%%%%%%%%%%%%%%%%%%%

%%% Local Variables:
%%% mode: latex
%%% TeX-master: "MainPaperFile"
%%% End: